\date{\today}
\let \div \unefined
\let \Im \undefined
\DeclareMathOperator{\Im}{Im}
\DeclareMathOperator{\Div}{div}
\DeclareMathOperator{\div}{div}
\DeclareMathOperator{\WF}{WF}
\DeclareMathOperator{\curl}{curl}
\DeclareMathOperator{\supp}{supp}
\DeclareMathOperator{\diag}{diag}
\DeclareMathOperator{\grad}{grad}
\DeclareMathOperator{\adj}{adj}
\let \sectionsymbol \S
\newcommand{\R}{\mathbb{R}}
\newcommand{\C}{\mathbb{C}}
\renewcommand{\S}{\mathbb{S}}
\newcommand{\p}{\partial}
\newcommand{\n}{\nabla}
\newcommand{\oM}{\overline{M}}
\newcommand{\rg}{\rangle}
\renewcommand{\lg}{\langle}
\newcommand{\foliation}{\mathsf{x}}
\newcommand{\bo}{\partial M}
\newcommand{\In}{\text{in}}
\newcommand{\Out}{\text{out}}
\newcommand{\Ev}{{\text{ev}}}
\newcommand{\calE}{\mathcal{E}}
\newcommand{\calF}{\mathcal{F}}
\newcommand{\calH}{\mathcal{H}}
\newcommand{\calL}{\mathcal{L}}
\newcommand{\calM}{\mathcal{M}}
\newcommand{\calU}{\mathcal{U}}
\newcommand{\bv}{\mathbf{v}}
\newcommand{\bu}{\mathbf{u}}
\newcommand{\bw}{\mathbf{w}}
\newcommand{\bz}{\mathbf{z}}
\renewcommand{\a}{\alpha}
\renewcommand{\b}{\beta}
\renewcommand{\d}{\delta}
\let\epsilon\varepsilon
\newcommand{\e}{\epsilon}
\newcommand{\G}{\Gamma}
\renewcommand{\r}{\rho}
\renewcommand{\t}{\tau}
\renewcommand{\l}{\lambda}
\newcommand{\s}{\sigma}
\newcommand{\w}{\omega}
\newtheorem{theorem}{Theorem}
\newtheorem{definition}{Definition}
\numberwithin{definition}{section}
\newtheorem{proposition}[definition]{Proposition}
\newtheorem{lemma}[definition]{{Lemma}}
\theoremstyle{definition}
\newtheorem{remark}[definition]{Remark}
\newtheorem{corollary}[definition]{Corollary}
\newtheorem{claim}{Claim}
\newtheorem{assumption}[definition]{Assumption}
\numberwithin{equation}{section}
\newcommand{\bal}{\begin{aligned}}
\newcommand{\eal}{\end{aligned}}
\let \o \undefined
\def \o#1{\overline{#1}}
\let\td\undefined
\def \td#1{\widetilde{#1}}
\let\implies\Rightarrow
\newcommand{\be}[1]{\begin{equation}\label{#1}}
\newcommand{\ee}{\end{equation}}
\begin{document}
\title{The solid-fluid transmission problem}

\author[N. Eptaminitakis] {Nikolas Eptaminitakis}
\address{Department of Mathematics, Purdue University, West Lafayette, IN 47907}

\author[P. Stefanov]{Plamen Stefanov}
\address{Department of Mathematics, Purdue University, West Lafayette, IN 47907}
\thanks{P.S. partly supported by  NSF  Grant DMS-1900475}

\begin{abstract}We study microlocally the transmission problem at the interface between an isotropic linear elastic solid and a linear inviscid fluid.
We set up a system of  evolution equations describing the particle displacement and velocity in the solid, and pressure and velocity in the fluid, coupled by suitable transmission conditions at the interface. We show well posedness for the coupled system and study the problem microlocally, constructing a parametrix for it using geometric optics. This construction describes the reflected and transmitted waves, including mode converted ones, related to incoming waves from either side. We also study formation of surface Scholte waves. Finally, we prove that under suitable assumptions, we can recover the s- and the p-speeds, as well as the speed of the liquid, from boundary measurements. 
\end{abstract}

\maketitle

\section{Introduction}

The analysis of waves meeting an interface between a solid and liquid body is of great interest in seismology, where it is of importance to understand the behavior of seismic waves in the interior of the Earth.
It is well known that the Earth's outer core is liquid, and of course the same is true of the oceans, whereas the crust, mantle and inner core are solid.
Earthquakes occur in the crust or upper mantle, so it is desirable to investigate their behavior when they encounter a liquid medium. 
The purpose of the present work is to study microlocally the transmission problem at the  interface between an isotropic linear elastic solid and a compressible inviscid fluid.
We assume that the interface is smooth and that the Lam\'e parameters $\l_{\rm s}$, $\mu_{\rm s}$ in the solid, the bulk modulus $\l_{\rm f}$ in the fluid, and the two respective densities $\r_{\rm s}$ and $\r_{\rm f}$   are spatially varying. We  construct and justify a parametrix (an approximate solution up to a smooth error) for a coupled system describing the pressure and particle velocity in the fluid side and the particle displacement and velocity in the solid. 
The pressure-velocity in the fluid is coupled with the displacement-velocity in the solid via two  transmission conditions:
the kinematic condition requires that the normal component of the velocity at the interface must match for the two bodies; unlike the case of a solid-solid interface, tangential slipping is allowed.
The dynamic transmission condition requires that  the vector valued traction across the interface must be continuous across it and normal to it. 
Those transmission conditions determine how parametrices constructed separately in the two sides of the interface must be combined to yield a parametrix for the full system.

In seismology the oceans and the outer core are often treated as inviscid fluids: it is mentioned in \cite[p.128]{AkiKeiiti2002Qs} that the assumption of zero viscosity is reasonable for wavelengths and periods typical of seismic waves. 
Models assuming a viscous outer core have also been studied by  some authors, see e.g \cite{10.1111/j.1365-246X.2004.02416.x} and the references there. 
In a solid, the main quantity one is interested in describing is particle displacement.
In a fluid, one is generally more interested in  the fluctuations of hydrostatic pressure and not as much in the displacement (see \cite[\sectionsymbol 2.4.3]{SheriffRobertE1995Es}), so our primary model (\ref{sys_1}-\ref{tr_3}) below involves a linear first order system of coupled velocity-pressure equations in the fluid. 
This system can be easily decoupled into second order  equations for the velocity and the pressure, though the transmission conditions at the solid-fluid interface for a  displacement-pressure or displacement-velocity system do not appear to be very natural from a physical point of view, at least in the time dependent formulation of the problem.
This is our reason for using the coupled first order velocity-pressure system in the fluid, which leads to naturally expressed transmission conditions.
The velocity-pressure system in a fluid was studied, e.g. in \cite{Bal98transporttheory}, and, coupled with a solid  via transmission conditions, in \cite{MR3803777}, \cite{MR4052755} (with constant Lam\'e parameters and densities).
Displacement-pressure systems for a solid-fluid in the stationary formulation have been studied e.g. in \cite{C.J.Luke1995FIAS}, \cite{MR4104321}.
Regarding our assumptions in the solid side, we use the classical model of linear elasticity describing the displacement in an isotropic linear elastic body (see e.g. \cite{AkiKeiiti2002Qs}, \cite{MR1262126}).

In order to simplify the presentation, our setup consists of a fluid occupying a domain $M^-\subset \R^3$, enclosed by a solid occupying a domain $M^+\subset \R^3$, such that $M^-$ and $M:=\o{M^-}\cup M^+$ are diffeomorphic to a ball (see Fig. \ref{fig:domain} below); we write $\G=\o{M^+}\cap \o{M^-}$ for the interface between the two.
If one wished to use a model more closely resembling the Earth structure, one might work on a manifold diffeomorphic to a ball which contains a number of layers, each occupied by a solid or fluid, with transmission conditions imposed at the various interfaces between layers; see \cite{DahlenF.A1998TGS}, \cite{dehoop2017elasticgravitational} and \cite{stefanov2020transmission}, with  only solid layers in the latter. 
Since the microlocal analysis of the transmission systems is local in nature and the solid-solid and fluid-fluid transmission problems are handled, for instance, in \cite{stefanov2020transmission}, our study of the transmission problem does not become less comprehensive by our choice of a simplified setup.
We mention that within the regime of linear elasticity it is also possible to use more involved models taking into account factors such as self-gravitation and rotation of the Earth (see e.g. \cite{DahlenF.A1998TGS}, \cite{dehoop2017elasticgravitational}).
One may also work with anisotropic solids; the transmission problem at the interface between anisotropic elastic solids was analyzed microlocally in the recent paper \cite{hansen2021propagation} as part of a study of the propagation of polarizations for geometric systems of real principal type.

The first question we address is the  well posedness of our system of evolution equations.
For this purpose, in Section~\ref{sec:well_posedness}, we turn the initial system for displacement and velocity in the solid, and pressure and velocity in the fluid, into a system of second order equations for the particle displacement fields in both the solid and fluid, subject to transmission conditions. 
This results in a PDE system of the form $\p_t^2\bu= P\bu$, where $\bu=(u^+,u^-)$ is the pair of the displacements in the solid and fluid region respectively, and $P=\diag(P^+, P^-)$
with $P^\pm$ second order matrix differential operators.
We show that $P$ with an appropriate domain $D(P)$ is a self-adjoint operator on $L^2(M^+)\times L^2(M^-)$ (with suitable measures) and produces a solution for given initial Cauchy data using functional calculus.
Well posedness for solid-fluid systems is also shown in e.g. \cite{MR4052755}, \cite{dehoop2017elasticgravitational}. 
We actually take the extra step of identifying the domain of the self adjoint operator $P$ explicitly. 
Although this is not strictly necessary to show well posedness, it is helpful for justifying the parametrix, i.e. showing that our parametrix differs from an actual solution by a smooth error.
For the case of an interface between two fluids, with acoustic equations satisfied on both sides, the justification of the parametrix follows from \cite{MR1183346}.
Identifying the domain of $P$ takes substantial effort; one needs to show regularity estimates closely resembling elliptic regularity estimates for solutions to a transmission problem for a pair of elliptic differential operators with smooth coefficients up to an interface (see e.g. \cite[Ch. 4]{McLean-book}).
However, the operator $P^-$ is not elliptic, thus such regularity results do not appear to be immediately quotable and we had to adapt the proofs to our situation; as they are somewhat lengthy and technical we included them in the Appendix.

Next, we need to construct a parametrix for our solid-fluid system.
The study of the elastic wave system with constant Lam\'e parameters is often simplified using potentials (see e.g. \cite{SheriffRobertE1995Es}).
In this way one obtains a decomposition of elastic waves into shear ($\rm s$) and pressure ($\rm p$) waves, which are transversal and longitudinal respectively.
In 
\cite{stefanov2020transmission}, it was shown that the elastic system with non-constant Lam\'e parameters can be decoupled microlocally, up to lower order matrix pseudodifferential operators.
In this way, from a microlocal point of view, its study reduces to the study of potentials satisfying principally scalar hyperbolic pseudodifferential systems.
For those, the construction of a parametrix   via geometric optics is standard (see e.g. \cite{MR618463}).
One also obtains a decomposition of an elastic wave into a microlocal $\rm s$ and $\rm  p$ wave up to lower order terms.
In the fluid side, we similarly use a potential to reduce the study of the evolution of the  ``momentum density'' $\r_{\rm f}v^-$, where  $v^-$ is the velocity in the fluid, to the study of a scalar hyperbolic equation with a source supported away from the interface at all times.
For such an equation we can again construct a parametrix away from the interface and boundary.

The parametrices constructed on the two sides of the interface between the solid and fluid must be matched using the transmission conditions.
Suppose that we have solutions of the elastic and acoustic wave equation on the solid and fluid side respectively, consisting of  incoming and outgoing waves (incoming/outgoing waves propagate singularities only in the past/future respectively, in their respective domains).
The Dirichlet and Neumann data of those solutions at the interface $\G\times \R$ are coupled by the transmission conditions.
To show microlocal well posedness for the transmission problem, it suffices to show that the Dirichlet data  of the outgoing waves at  $\G\times \R$ can be uniquely produced from Dirichlet data for the incoming ones.
If this is the case, then the geometric optics construction can be used to yield parametrices for the outgoing waves; combining them with parametrices for the incoming ones, we can obtain a parametrix for the full system near the interface.
With the aid of appropriate incoming and outgoing Dirichlet to Neumann maps relating Neumann and Dirichlet data, the system induced by the transmission conditions can be reduced 
to a pseudodifferential system on $\G\times \R$ for the Dirichlet data of the outgoing waves, in a conical neighborhood of the Dirichlet data of the  incoming waves.
In this way, microlocal well posedness of the transmission problem is reduced to the microlocal solvability (ellipticity) of this system.

It turns out that the form of those microlocal systems and their solutions (i.e. of the waves produced) depends on the  traces of the incoming waves, and we have to study six cases separately (we do not investigate the case of wave front sets in the glancing regions, see below).
In some of those cases,  evanescent waves are produced  on either or both sides of the interface, that is, waves which decay exponentially fast away from $\G$.
Those do not  propagate singularities into the interior of the solid or fluid region.
Of particular interest are surface  waves which are evanescent on both sides of $\G\times \R$ and propagate singularities along $\G\times \R$.
In the geophysical literature, surface waves at the interface between a solid and a fluid are known as Scholte waves.
For constant densities and Lam\'e parameters and a flat interface between two solids, the analogous surface waves (known as Stoneley waves), do not always exist;   however, in the constant parameter case, Scholte waves are known to  always be possible (see \cite{ScholteJ.G1947TROE}, \cite[\sectionsymbol 2.5.3]{SheriffRobertE1995Es},
 \cite[p. 156]{AkiKeiiti2002Qs}, \cite{ansell}).

We will always assume that the Dirichlet data of our solutions at $\G\times \R$ are away from the glancing regions in $T^*(\G\times \R)$ with respect to the wave speed of the fluid and the microlocal $\rm p$ and $\rm s$ waves in the solid (see Sections \ref{sec:acoustic} and \ref{sec:elastic}).
The projections to $M$ of bicharacteristic rays emanating from glancing covectors are tangential to the hypersurface $\G$.
The construction of a parametrix for the acoustic or elastic wave equation given Dirichlet data with wave front set in the glancing region corresponding to the acoustic or $\rm s$/$\rm p$ wave speed respectively is more delicate, see e.g. \cite{MR618463, MR1334206, Yamamoto_09}.  
We should also mention that in order  to construct a full parametrix for our system, one also needs to consider the behavior of singularities of elastic waves meeting the outer boundary $\p M$.
We do not pursue this here, since it has been studied in detail in \cite[Section 8]{MR3454376}.

We apply the analysis above  to study the inverse problem of recovering the densities and the Lam\'e parameters of the solid and  fluid from the Neumann to Dirichlet map at the boundary $\p M$. In Theorem~\ref{thm_inverse}, we prove that we can recover %
the shear and the pressure elastic speeds $c_{\rm s}=\sqrt{(\lambda_{\rm s} + 2\mu_{\rm s})/\rho_{\rm s}}$ and $c_{\rm p} = \sqrt{\mu_{\rm s}/\rho_{\rm s}}$ in $M_+$,  and the liquid speed  $c_{\rm f}=\sqrt{\l_{\rm f}/\r_{\rm f}}$ in $M_-$ under a foliation condition. The density  $\rho_{\rm s}$ is also recoverable under some condition. We do not recover $\l_{\rm f}$ and $\r_{\rm f}$ separately though; an attempt to this would make the exposition even longer.  %
The main idea is to reduce this problem to the lens/boundary rigidity one and use the result in \cite{MR3454376}, see also \cite{SUV_elastic, stefanov2020transmission, caday2019recovery}.

The paper is organized as follows: in Section \ref{sec:setup_and_model} we describe our geometric setup and main model and elaborate on the various physical quantities appearing in it.
In Section \ref{sec:well_posedness} we show well posedness for the coupled system of evolution equations in the solid and fluid and identify the domain of the self-adjoint operator $P$ mentioned before. 
In Sections \ref{sec:acoustic} and \ref{sec:elastic} we discuss some necessary background on the geometric optics construction for the acoustic and elastic equation respectively. In Section \ref{sec:well_posedness_microlocal} we study the transmission systems. %
The inverse problem is studied in Section~\ref{sec_IP}. 
Finally, in Section \ref{ssec:justification} we justify the parametrix, i.e. we show that it differs from an actual solution by a smooth error.
In Appendix \ref{appendix_a} we explain how well posedness and parametrix justification work for the solid-solid and fluid-fluid case, quoting some readily available results.
In Appendix \ref{appendix_b} we present two lengthy proofs omitted from Section \ref{sec:well_posedness}.

\section{The setup and main model}\label{sec:setup_and_model}

Suppose ${M} \subset \R^3$ and ${M} ^-\subset \subset {M} $ are precompact domains diffeomorphic to an open ball, and
$g$ is a smooth background Riemannian metric on $M$, whose purpose will be to help us conveniently change coordinates whenever necessary.
Let ${M} ^+={M} \setminus \o{{M}} ^-$ (see Figure \ref{fig:domain}).
We assume that ${M}^+$ is occupied by an isotropic elastic solid and ${M}^-$ is occupied by a compressible inviscid fluid.
Let $\nu$ be the outer pointing unit normal to $\p{M} _+$, and set $\Gamma=\p{{M} _-}$.
We will study the first order system
\begin{subequations}
\begin{align}
	\p_tu^+=w^+\qquad &\text { in } {M}^+\times \R\noeqref{sys_1}\label{sys_1},\\
	\p_{t}w^+=\r_{\rm s}^{-1}Eu^+\qquad &\text { in } {M}^+\times \R\noeqref{sys_2}\label{sys_2}, \\
	\p_tv^-=-\r_{\rm f}^{-1}\n p^-\qquad &\text{ in }{M}^-\times \R \noeqref{sys_3}\label{sys_3}, \\
	\p_tp^-=-\l_{\rm f}\Div v^-\qquad &\text{ in }{M}^-\times \R \noeqref{sys_4}\label{sys_4}, \\
	w^+\cdot\nu=v^-\cdot \nu \qquad &\text{ on }\Gamma\times \R \label{tr_1}, \\
	N(u^+)= -p^-\nu \qquad &\text{ on }\Gamma\times \R\label{tr_2}\noeqref{tr_2}, \\
	N(u^+)=0 \qquad &\text{ on } \p M\times \R,\label{tr_3}
	\intertext
	{with prescribed Cauchy data }
		(u^+,w^+,p^-,v^-)\big|_{t=0}=&(u^+_0,w^+_0,p_0^-,v^-_0).\label{eq_init}
	\end{align}
\end{subequations}
We  will later place assumptions on the data as needed.
The densities $\r_{\rm s}$, $\r_{\rm f}$ are assumed smooth functions of $x$ and positive and the same is assumed for the bulk modulus $\l_{\rm f}$ of the fluid.
In \eqref{tr_1} and throughout, $\,\cdot\,$ denotes pairing with respect to the metric $g$.
Physically, the vector field $u^+$ stands for the displacement field in the solid, and $v^+$ and $p^+$ stand for the velocity and pressure in the fluid, respectively.
In \eqref{sys_2}, $$Eu^+ =\div \s(u^+),$$ where $\s$ is the Cauchy Stress tensor, see \eqref{eq:Cauchy_stress} below.
We denote by
$$N(u^+)=\s(u^+)\cdot \nu$$
 the traction across $\G$ and $\p M$.
The transmission condition \eqref{tr_1} indicates that the normal component of the velocity is continuous across the interface, allowing tangential slipping.
\eqref{tr_2} indicates that the tangential components of the traction at the interface vanish, whereas its normal component is continuous.
At the interface between a solid and vacuum (or air, by approximation) one requires vanishing of the normal traction, i.e. \eqref{tr_3}.
Those transmission and boundary conditions for the interface between a solid and a fluid are physically reasonable and widely used in the geophysical literature, see e.g. \cite[Problem 2.10]{SheriffRobertE1995Es}, \cite[Section 5.2]{AkiKeiiti2002Qs}.

Given $u^+\in C^\infty(\o{{M}}^+;TM)$, the Cauchy Stress tensor is a symmetric (2,0)-tensor field, given  by 
	\begin{equation}\label{eq:Cauchy_stress}
		\s(u^+) =\l_{\rm s} (\div u^+)g^{-1}+2\mu_{\rm s}  d^{\rm s} u^+.
	\end{equation}
In \eqref{eq:Cauchy_stress}, $\l_{\rm s}$, $\mu_{\rm s}$ are the Lam\'e parameters, which are assumed to be smooth, positive and spatially varying on $\o{{M}}^+$ but constant in time. 
We denote by $d^{\rm s}u$ the symmetrized covariant differential of a vector field $u$, with a raised index, becoming a $(2,0)$-tensor field. In local coordinates,
\begin{equation}\label{eq:symmetric_diff}
	(d^{\rm s} u)^{ab}=\frac{1}{2}(\n^a u^b+\n^b u^a)=\frac{1}{2}(g^{ak}u^b{}_{;k}+g^{bk}u^a{}_{;k}+g^{ak}g^{b\ell}g_{k\ell;m}u^m),
\end{equation} where repeated indices indicate summation
 and for a vector field $u$ we write  $\n^au^b=g^{ak}\n_k u^b=g^{ak}(u^b{}_{;k}+ \G_{k\ell}^bu^\ell)$ with $\G_{k\ell}^b$ denoting the Christoffel symbols of $g$
  (also see Remark \ref{rmk:dif} below). Thus  $Eu$ can be written in local coordinates as
\begin{equation}\label{elastic_wave_operator}
	(Eu)^a=\n^a (\l_{\rm s} \n_k u^k)+\n _k \big(\mu_{\rm s} (\n^a u^k+\n^k u^a)\big).
\end{equation}
Note that in the first term above, the covariant derivative of a scalar function, with a raised index, agrees with the gradient $\n=\grad$, and this is the interpretation we will place on $\n f$ for $f$ scalar, i.e. $\n f$ will be an $(1,0)$ tensor field.

\begin{remark}\label{rmk:dif}
	Following \cite[Section 4.3]{MR1262126}, $u^+$ is considered a vector field, but it is also possible to treat it as a covector field, as done in  \cite{stefanov2020transmission};  one can switch between the two by  lowering or raising an index with respect to $g$. 
	A slight advantage of viewing $u^+$ as a vector field is the natural interpretation of the strain tensor as $\frac{1}{2}\calL_{u^+}g$, where $\calL$ denotes Lie derivative (see \cite{MR1262126}). 
	On the other hand, a disadvantage is that the notation $d^{\rm s}$ is more commonly used in the literature to denote  the symmetrized covariant derivative of a covector field $\omega$, with no indices raised.
	Denoting the latter by $d^{\rm s}_{\flat}$, we can see that it is related to $d^{\rm s}$ as defined in \eqref{eq:symmetric_diff} in a natural way.
	In local coordinates we have $(d^{\rm s}_{\flat}\w)_{\a\b}=\frac{1}{2}(\w_{a;b}+\w_{b;\a}-2\G_{ab}^k\w_k)$, thus a computation shows that 
		$d^{\rm s} u^+=(d^{\rm s}_{\flat}(u^+)^\flat)^{\#\#}=\frac{1}{2}(\calL_{u^+}g)^{\#\#},$
	where $\# $ and $\flat$ indicate raising and lowering of indices respectively.
\end{remark}

We make  the following assumption on our initial data, whose relevance will become clear in Section \ref{sec:well_posedness} below.
\begin{assumption}\label{as:int}
	We have $\int_{M^-}p_0^-/\l_{\rm f}\, dv_g=\int_\G u^+_0\cdot \nu\,  dA$, where $dv_g$, $dA$ are the natural measures induced by $g$ on $M^-$ and $\G$ respectively.
	Note that $\nu$ is inward pointing with respect to $M^-$.
\end{assumption}

\begin{figure}[h]
	\includegraphics[page=1]{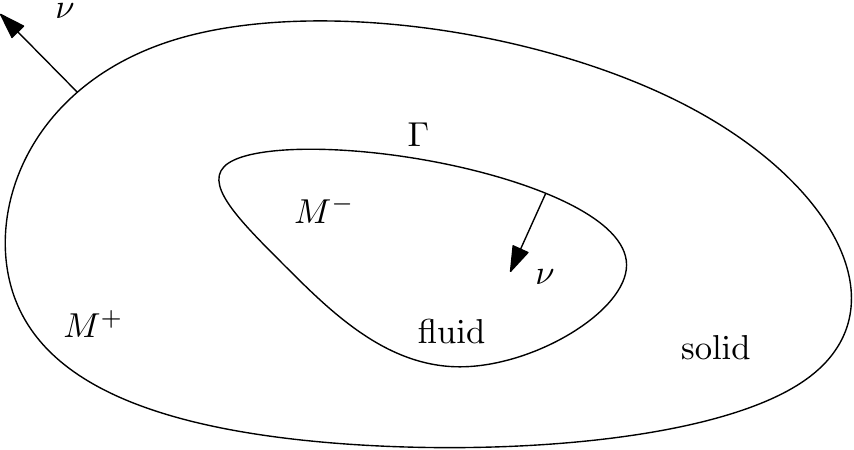}
	\caption{The geometric setting.}
	\label{fig:domain}
\end{figure}

\section{Well posedness of the acoustic-elastic wave equation}
\label{sec:well_posedness}

In order to prove well posedness for the system (\ref{sys_1}-\ref{tr_3}) we consider an auxiliary system, which physically corresponds to equations for the displacement in the solid and fluid.
As we show below, the system (\ref{sys_1}-\ref{tr_3}) with initial conditions \eqref{eq_init} satisfying Assumption \ref{as:int} is equivalent to the system
\begin{subequations}
\begin{align}
	\p_{t}^2u^+=\r_{\rm s}^{-1}Eu^+ &\qquad \text{ in } {M}^+\times \R,\noeqref{sys_1_aux}\label{sys_1_aux}\\*
	\p_t^2u^-=\r_{\rm f}^{-1}\n \l_{\rm f} \div u^-&\qquad \text{ in } {M}^-\times \R\noeqref{sys_2_aux}\label{sys_2_aux}, \\*
	 u^+\cdot \nu=u^-\cdot \nu&\qquad \text{ on } \Gamma\times \R\noeqref{tr_1_aux}\label{tr_1_aux}, \\*
	  N(u^+)=\l_{\rm f} (\div u^- )\;\nu&\qquad \text{ on } \Gamma\times \R\label{tr_2_aux}\noeqref{tr_2_aux}, \\*
	   N(u^+)=0&\qquad \text { on }\p M\times \R,\noeqref{tr_3_aux}\label{tr_3_aux}
\intertext{for $\bu=(u^+,u^-)$ with initial data}
	\bu=\bu_0,\quad \p_t \bu=\bw_0&\qquad \text { at } t=0\label{eq:initial_data_aux}
	\end{align}
\end{subequations}
chosen as follows: given sufficiently regular initial data as in \eqref{eq_init} satisfying Assumption \ref{as:int},
 we choose initial data $(\bu_0,\bw_0)=(u_0^+,u_0^-,w_0^+,w_0^-)$ by taking $w_0^-=v_0^-$ and choosing $u^-_0$ such that 
 \begin{equation}
 	p_0^-=-\l_{\rm f}\div u_0^-\text{ on }M^-,\quad  u_0^+\cdot \nu=u_0^-\cdot \nu\text { on }\G.\label{eq:displ_tr}
 \end{equation}
 We find such a $u_0$ as follows:
solve
\begin{equation}\label{eq:potential_neu}
	\Delta \w_0= -p_0/\l_{\rm f} \text { on }M^-, \quad \p_\nu\w_0= u_0^+\cdot \nu  \text { on }\Gamma,
\end{equation}
and take $u_0^-=\n \w_0$.
Assumption \ref{as:int} guarantees the existence of a solution to \eqref{eq:potential_neu}, unique up to a constant, for appropriate regularity of the initial data.
Moreover, if \eqref{eq:displ_tr} is satisfied (as is the case for a reasonable simple model for a fluid, see e.g. \cite[(8.2)]{AkiKeiiti2002Qs}) then Assumption \ref{as:int} is automatically satisfied. 

The choice of initial data $u_0^-$ satisfying \eqref{eq:displ_tr} is not unique.
Any other such choice $u_0^-{}'$ 
 will differ from $u_0^-$ by a divergence free vector field $z_0$ with $z_0\cdot \nu =0$ on $\G$. 
The solution $\bu'$ of (\ref{sys_1_aux}-\ref{tr_3_aux}) with initial data \eqref{eq:initial_data_aux}, where $u^-_0$ is replaced by $u_0^-{}'$, satisfies $\bu-\bu{}'\big|_{t=0}= (0,z_0)$ and $\p_t(\bu-\bu{}')\big|_{t=0}= (0,0)$.
By the energy conservation \eqref{aux_energy} below, $\bu-\bu'$ has vanishing energy for all time (since this is the case for $t=0$).
Therefore, $\p_t(\bu-\bu{}')\equiv 0$, implying that $\bu-\bu{}'= (0,z_0)$ for all time, thus constant (as before the pair stands for the $+$ and $-$ component of $\bu-\bu{}'$).

Now to produce a solution of the original system (\ref{sys_1}-\ref{tr_3})  given one of (\ref{sys_1_aux}-\ref{tr_3_aux}), set $w^+=\p_t u^+$, $v^- =\p_t u^-$ and $p^-=-\l_{\rm f} \div u^-$.
The solution of (\ref{sys_1}-\ref{tr_3}) obtained using those substitutions is independent of adding a pair $(0,z_0)$ to $\bu=(u^+,u^-)$, where $z_0$ is constant, divergence free and satisfies $z_0\cdot \nu \big|_{\G}=0$, since such a term does not alter $p^-$ or $v^-$.
Moreover, Assumption \ref{as:int} is satisfied automatically by the divergence theorem and \eqref{tr_1_aux}.
Conversely, given a solution to (\ref{sys_1}-\ref{tr_3}) with Assumption \ref{as:int} in effect for the initial data, one can produce a solution for (\ref{sys_1_aux}-\ref{tr_3_aux}) by taking $u^-=u_0^-+\int_0^t v^-(\t)d\t$ with $u_0^-$ chosen as described above (the solution is not unique due to the ambiguity in the choice of $u_0^-$).
To verify \eqref{sys_2_aux}, one needs to use that $\p_t^2u^-=\p_tv^-=-\r_{\rm f}^{-1}\n p^-$, and the fact  that $\p_tp^-=-\l_{\rm f} \div v^-$ implies  $\p_t(p^-+\l_{\rm f}\div u^-)=0$, to obtain $p^-=-\l_{\rm f}\div u^-$ for all time by \eqref{eq:displ_tr}.
Note that since the initial data are chosen so that $u_0^+\cdot\nu=u_0^-\cdot \nu$, \eqref{tr_1_aux} follows from \eqref{tr_1}.
In summary, solutions $(u^+,w^+,p^-,v^-)$ of (\ref{sys_1}-\ref{tr_3}) with initial data subject to Assumption \ref{as:int} are in 1-1 correspondence with solutions $\bu=(u^+,u^-)$ of (\ref{sys_1_aux}-\ref{tr_3_aux}) with initial data chosen as described before, modulo the addition of a pair $(0,z_0)$ with the aforementioned properties.

We would like to show that (\ref{sys_1_aux}-\ref{tr_3_aux}) has a unique solution given initial data \eqref{eq:initial_data_aux} lying in an appropriate space; consider the unbounded densely defined matrix operator $P_0$ on 
 \begin{equation}\label{eq:H^0}
 \calH^0:=L^2({M}^+,\r_{\rm s}dv_g;\C\otimes T{M})\times L^2({M}^-,\r_{\rm f}dv_g;\C\otimes TM),	
 \end{equation}
  given by 
\begin{equation}\label{aux_op}
	P_0=\begin{pmatrix}
		P^+ & 0\\ 
		0 & P^-
	\end{pmatrix}
	:=\begin{pmatrix}
		\r_{\rm s}^{-1}E & 0\\ 
		0 & \r_{\rm f}^{-1} \n \l_{\rm f} \div
	\end{pmatrix},
\end{equation}
with domain  
\begin{equation}\label{eq:D_P_0}
\begin{aligned}
	D(P_0)&=\big\{(u^+,u^-)\in C^\infty(\o{M}^+;\C\otimes T {M})\times C^\infty (\o{M}^-;\C\otimes T {M}) \\
	&\text{ with  } u^+\cdot \nu\big|_\Gamma=u^-\cdot \nu\big|_\Gamma,\quad  N(u^+)\big|_\Gamma=\l_{\rm f} (\div u^- )\;\nu\big|_{\Gamma}, \quad  N(u^+)=0\text { on } \p M\big\}.
\end{aligned}
\end{equation}
Note that $P^-=\r_{\rm f}^{-1}E$ with $E$ as in \eqref{elastic_wave_operator} but with the Lam\'e parameters given by  $\l=\l_{\rm f}$, $\mu=0$. The operator $P^-$ is {not} elliptic. 
In \eqref{eq:H^0} and \eqref{eq:D_P_0} we view $u^\pm$ as sections of the complexified tangent bundles, so that $\calH^0$ becomes a complex Hilbert space.
Once we have shown well posedness for the system (\ref{sys_1_aux}-\ref{tr_3_aux}) with initial data which are real vector fields in a subset of $\calH^0$, we will be able to produce real vector fields satisfying (\ref{sys_1_aux}-\ref{tr_3_aux}) by taking real parts of the a priori complex vector field valued solution associated with the given data.

\begin{remark}
One can equivalently let the measures in the $L^2$ spaces in \eqref{eq:H^0} be the Lebesgue measure on $\R^3$; the specific choice of measures is only relevant for the definition of the corresponding inner products.
Similarly, below we will use $L^2$ based Sobolev spaces, all of which are defined using smooth measures on precompact domains.
To avoid cluttering the notation, we will often not indicate the measure explicitly when writing them, but we will indicate the target space, when it is different from $\C$; for instance we will write $u^-\in L^2(M^-;\C\otimes TM)$ to mean that $\int_{M^-}|u^-|_g^2\r_{\rm f}\, dv_g<\infty$; we will also write $\| u^-\|^2_{L^2(M^-)} <\infty $ in this case.
On the other hand, when writing inner products we will specify the measure, e.g. we will write $(u^-,v^-)_{L^2(M^-,\r_{\rm f}dv_g)}:=\int_{M^-}u^-\cdot \o{v}^-\r_{\rm f} \, dv_g$.	
We henceforth assume everywhere that we have fixed Sobolev norms on $M^\pm$, $\G$ and $\p M$.
\end{remark}

Below we write
\begin{equation}
(\bu_1,\bu_2)_{L^2}=(\bu_1,\bu_2)_{\calH^0}
:=(u_1^+,u_2^+)_{L^2(M^+,\r_{\rm s}dv_g)}	+(u_1^-,u_2^-)_{L^2(M^-,\r_{\rm f}dv_g)},
\end{equation}
and 
$$\|\bu\|^2_{L^2}=\|\bu\|^2_{\calH^0}=\|u^+\|_{L^2(M^+,\r_{\rm s}dv_g)}^2+\|u^-\|_{L^2(M^-,\r_{\rm f}dv_g)}^2.$$
Using the identities
\begin{align}
	\int_{M^+} E u ^+\cdot \o{v}^+ dv_g-\int_{M^+}  u ^+ \cdot E\o{v}^+  dv_g=\int_{\p M^+} N( u ^+)\cdot \o{v}^+-  u ^+\cdot  N(\o{v}^+)
	 d A, %
\end{align}
and 
\begin{align}
	\int_{M^-}\n (\l_{\rm f} \div u ^-)\cdot \o{v}^-& dv_g-\int_{M^-}  u ^- \cdot \n (\l_{\rm f} \div\o{v}^- ) dv_g\\*
	&=\int_{\G}\left( -\l_{\rm f}(\div  u ^-)\o{v}^-\cdot \nu+\l_{\rm f}( u ^-\cdot \nu)\div(\o{v}^-)\right) d A, 
\end{align}
valid for $ u^\pm,v^\pm\in C^\infty (M^\pm;\C\otimes TM) $ (recall that $\nu$ is inward pointing for $M_-)$, one sees that $P_0$ is symmetric on $D(P_0)$. By a similar computation using the transmission conditions and the identity
\begin{equation}
  \int_{M^+} E u ^+\cdot \o{v}^+ dv_g=-\int_{M^+}\l_{\rm s} ( \div u^+ \div \o{v}^-)+2\mu_{\rm s} (d^{\rm s}u^+\cdot d^{\rm s} \o{v}^+)dv_g  +  \int_{\p M^+} N( u ^+)\cdot \o{v}^+dA,
\end{equation}
we find 
\begin{equation}
	( \bu,-P_0 \bu)_{L^2} \geq 0,\quad \bu\in D(P_0).
\end{equation}
By the Friedrichs extension construction (see e.g. \cite{MR1892228}),  $P_0$  can be extended to a self-adjoint operator $P$  with domain $D(P)$.
In Section \ref{ssec:domain} below we investigate $D(P)$ in more detail; this will be useful for showing well posedness for the system (\ref{sys_1_aux}-\ref{tr_3_aux}) and for justifying our parametrix.

\subsection{The domain of \texorpdfstring{$P$}{P}}\label{ssec:domain}

We briefly recall the Friedrichs construction, which produces a self-adjoint extension of $P_0$.
The first step in the construction of the domain $D(P)$  consists of  completing $D(P_0)$  with respect to the norm
$\| \bu\|_q^2=(-P_0\bu,\bu)_{\calH^0}+\|\bu\|^2_{\calH^0}$. 
This norm is induced by the positive definite quadratic form $q_0(\bu,\bw)=(-P_0\bu,\bw)_{\calH^0}+(\bu,\bw)_{\calH^0}$ with domain $D(P_0)$.
Then the completion of $D(P_0)$ in $\|\cdot \|_q$ is the domain of the closure of $q_0$; we denote this closure by $q$ and its domain by $D(q)$. 
We note that $D(q)$ can be identified with a subset of $\calH^0$ (the inclusion $\iota:(D(P_0),\|\cdot \|_{q})\to (\calH^0,\|\cdot \|_{\calH^0})$ extends to an injective bounded linear map $\hat{\iota}:(D(q),\|\cdot \|_{q})\to(\calH^0,\|\cdot \|_{\calH^0})$).
The operator 
$P_0$  subsequently extends to a bounded operator $P:D(q)\to D(q)^*$ 
by letting $(-P\bu,\bv)_{\calH^0}:=q(\bu,\bv)-(\bu,\bv)_{\calH^0}$, $\bu,\;\bv\in D(q)$. %
Then one takes
\begin{equation}\label{eq:domain_general_P}
	D(P)=\{\bw\in D(q): q(\bu,\bw)\leq C\|\bu\|_{\calH^0}\quad\text{for all } \bu\in D(q) \}.
	\end{equation}
Hence for $\bw\in D(P)$, $q(\cdot,\bw)$ extends to a bounded linear functional on $\calH^0$, and by the Riesz representation theorem there exists a unique $\td{\bw}\in \calH^0$ such that $q(\bu,\bw)=(\bu,\td{\bw})_{\calH^0}$; set $P\bw=-\td{\bw}+\bw$, which is a self-adjoint extension of $P_0$.

\medskip

We first identify the domain of the quadratic form $q$.
Below we set, for integer $k\geq 1$,
\begin{equation}
	H^k_{\div}(M^-;\C\otimes TM)=\{u^-\in L^2(M^-;\C\otimes TM):\div u^-\in H^{k-1}(M^-)\},
	\end{equation}
	where the divergence is with respect to $g$, and $\div u^-$ is a priori defined in a distributional sense.
If $u^-\in H^1_{\div}(M^-;\C\otimes TM)$,
then the trace  of the normal component of $u^-$ can be weakly defined as an element of $H^{-1/2}(\G)$ via
\begin{equation}
    -\lg\t(u^- \cdot\nu) ,\phi\rg _{L^2(\G,dA)}:=(\div u^-,
    \td{\phi})_{L^2(M^-,dv_g)}+( u^-,\n
    \td{\phi})_{L^2(M^-,dv_g)},\quad 
    {\phi}\in H^{1/2}(\p \G),
\end{equation}
where $\td{\phi} \in H^{1}(M^-)$ is an extension of $\phi$ off  $\G$ depending continuously on $\| \phi\|_{H^{1/2}(\G)}$ (it can be shown that the choice of extension does not affect the result).
Moreover, $\|\nu\cdot u^-\|_{H^{-1/2}(\G)}$ depends continuously on the norm $\|u^-\|_{H^1_{\div}(M^-)}:=(\|u^-\|_{L^2(M^-,dv_g)}^2+\|\div u^-\|_{L^2(M^-,dv_g)}^2)^{1/2}$.

\begin{lemma}\label{lm:h_1}
We have 
\begin{equation}\label{eq:H1div}
\bal
	D(q)&=\calH^1_{\div,\rm{tr}}\\
	:&=\{(u^+,u^-)\in H^1(M^+;\C\otimes TM)\times H^1_{\div}(M^-;\C\otimes TM): \t( u^+)\cdot \nu =
	\t(u^-\cdot \nu)\}.
\eal
\end{equation}
\end{lemma}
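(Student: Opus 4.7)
The plan is to identify $D(q)$ by establishing, on the defining domain $D(P_0)$, an explicit representation of $q_0$ and a norm equivalence between $\|\cdot\|_q$ and the natural norm on $\mathcal H^1_{\div,\mathrm{tr}}$, and then by proving $D(P_0)$ is dense in $\mathcal H^1_{\div,\mathrm{tr}}$. First, I would combine the two integration-by-parts identities stated just above the lemma (for $E$ on $M^+$ and for $\nabla(\lambda_{\rm f}\div)$ on $M^-$) with the three transmission and boundary conditions built into $D(P_0)$. The boundary term at $\partial M$ vanishes since $N(u^+)=0$ there, and the $\Gamma$-contributions from $M^+$ and $M^-$ cancel because $N(u^+)=\lambda_{\rm f}(\div u^-)\nu$ and $u^+\cdot\nu=u^-\cdot\nu$ on $\Gamma$. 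The outcome is
\begin{equation}
q_0(\mathbf{u},\mathbf{u})=\int_{M^+}\bigl(\lambda_{\rm s}|\div u^+|^2+2\mu_{\rm s}|d^{\rm s}u^+|_g^2\bigr)\,dv_g+\int_{M^-}\lambda_{\rm f}|\div u^-|^2\,dv_g+\|\mathbf{u}\|_{\mathcal H^0}^2
\end{equation}
for $\mathbf{u}\in D(P_0)$.

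Next, using Korn's inequality on $M^+$, $\|u^+\|_{H^1(M^+)}^2\leq C(\|d^{\rm s}u^+\|_{L^2(M^+)}^2+\|u^+\|_{L^2(M^+)}^2)$, together with the positivity of $\mu_{\rm s}$, $\lambda_{\rm f}$ and the smooth, strictly positive densities, the above formula shows that $\|\cdot\|_q$ is equivalent, on $D(P_0)$, to the canonical norm of $\mathcal H^1_{\div,\mathrm{tr}}$, namely $\|u^+\|_{H^1(M^+)}^2+\|u^-\|_{L^2(M^-)}^2+\|\div u^-\|_{L^2(M^-)}^2$; the upper bound is immediate from the explicit form. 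Since $D(q)$ is by definition the completion of $D(P_0)$ in $\|\cdot\|_q$ and embeds into $\mathcal H^0$ via $\hat\iota$, this equivalence identifies $D(q)$ with the closure of $D(P_0)$ inside $\mathcal H^1_{\div,\mathrm{tr}}$. I would also remark that the trace condition $\tau(u^+\cdot\nu)=\tau(u^-\cdot\nu)$ is preserved under such limits, because the trace $u^+\mapsto \tau(u^+\cdot\nu)\in H^{1/2}(\Gamma)$ is continuous on $H^1(M^+)$ and $u^-\mapsto \tau(u^-\cdot\nu)\in H^{-1/2}(\Gamma)$ is continuous on $H^1_{\div}(M^-)$ by the discussion preceding the lemma, so both sides converge in $H^{-1/2}(\Gamma)$.

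The remaining task, and the main obstacle, is to prove that $D(P_0)$ is dense in $\mathcal H^1_{\div,\mathrm{tr}}$ with respect to the canonical norm. The obstruction is that elements of $D(P_0)$ obey the three transmission/boundary conditions, whereas a generic element of $\mathcal H^1_{\div,\mathrm{tr}}$ satisfies only the Dirichlet-type matching of normal traces. My strategy would be first to approximate $u^-$ by smooth vector fields $u^-_n\to u^-$ in $H^1_{\div}(M^-)$ whose normal traces converge to $\tau(u^-\cdot\nu)$ in $H^{-1/2}(\Gamma)$ (done by cutting off, mollifying, and correcting the normal trace by a small gradient field), and then to pick smooth $u^+_n\in C^\infty(\overline{M^+};\mathbb C\otimes TM)$ with $u^+_n\to u^+$ in $H^1(M^+)$ and $u^+_n\cdot\nu|_\Gamma=u^-_n\cdot\nu|_\Gamma$ (achievable by lifting matching $H^{1/2}$ traces). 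The delicate point is the final correction: the Neumann-type data $N(u^+_n)=\lambda_{\rm f}(\div u^-_n)\nu$ on $\Gamma$ and $N(u^+_n)=0$ on $\partial M$ must be installed without destroying the existing $H^1$ convergence or the already arranged normal-trace matching.

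To do this I would add to $u^+_n$ a corrector $\varphi_n\in H^1(M^+;\mathbb C\otimes TM)$ supported in a tubular neighborhood of $\partial M^+$, chosen so that $\varphi_n\cdot\nu=0$ on $\Gamma\cup\partial M$ (so the normal-trace match is preserved) and $N(u^+_n+\varphi_n)$ equals the prescribed data, obtained by solving a Lam\'e-type elliptic problem on the tubular neighborhood with mixed Neumann-tangential Dirichlet data. Standard elliptic theory makes $\varphi_n$ of size controlled by the width of the neighborhood, so $\|\varphi_n\|_{H^1}\to 0$ as the neighborhood shrinks, and one can arrange $u^+_n+\varphi_n\to u^+$ in $H^1(M^+)$ while $(u^+_n+\varphi_n,u^-_n)\in D(P_0)$. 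This is precisely the technical regularity-at-an-interface business that the introduction flags as requiring adaptation (because $P^-$ is not elliptic) and that the authors defer to the Appendix; my proposal would follow the same pattern, with the non-ellipticity of $P^-$ causing no issue at this stage since only the simpler space $H^1_{\div}(M^-)$ enters into the form domain characterization.
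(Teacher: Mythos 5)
Your forward inclusion is essentially the paper's: expand $q_0$ on $D(P_0)$, cancel the boundary terms using the transmission conditions, invoke Korn's inequality for the $H^1(M^+)$ control, and use continuity of the traces to pass the transmission condition to the completion. That part is correct.

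There is a genuine gap in your density argument, and it is precisely the technical point the decomposition in the paper is designed to handle. You propose to first produce smooth $u^-_n\to u^-$ in $H^1_{\div}(M^-)$, and note that their normal traces converge to $\t(u^-\cdot\nu)$ only in $H^{-1/2}(\Gamma)$; you then want to choose smooth $u^+_n\to u^+$ in $H^1(M^+)$ with $u^+_n\cdot\nu|_\Gamma=u^-_n\cdot\nu|_\Gamma$, ``achievable by lifting matching $H^{1/2}$ traces.'' These two steps do not fit together. If $u^+_n\to u^+$ in $H^1(M^+)$ then necessarily $u^+_n\cdot\nu\to \t(u^+)\cdot\nu$ in $H^{1/2}(\Gamma)$, so you would need $u^-_n\cdot\nu\to\t(u^-\cdot\nu)=\t(u^+)\cdot\nu$ in $H^{1/2}(\Gamma)$; but convergence in $H^1_{\div}(M^-)$ gives you only $H^{-1/2}(\Gamma)$ control of the normal traces, and there is no reason an ad-hoc smooth approximant's normal trace should converge in the stronger topology. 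The extra half-derivative regularity $\t(u^-\cdot\nu)\in H^{1/2}(\Gamma)$ is an a priori consequence of the coupling with $u^+\in H^1$, not a generic property of $H^1_{\div}$ fields, and your approximation scheme does not preserve it.

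The paper's way around this is the Helmholtz-type splitting $u^-=\td{u}^-+\Pi u^-$: set $\td{u}^-=\n\w$ where $\w$ solves the Neumann problem $\Delta\w=\div u^-$, $\p_\nu\w=\t(u^+)\cdot\nu$. Because the Neumann datum is in $H^{1/2}(\Gamma)$ (by the transmission condition), elliptic regularity gives $\w\in H^2(M^-)$, hence $\td{u}^-\in H^1(M^-)$ with normal trace exactly $\t(u^+)\cdot\nu$ in $H^{1/2}$; the solenoidal remainder $\Pi u^-$ is only in $L^2$ but is divergence-free and has zero normal trace, so it imposes no trace constraint. One then approximates the pair $(u^+,\td{u}^-)$ jointly in local coordinates, observing that the normal components glue to an element of $H^1_0$ across $\Gamma$ and can be approximated by a single compactly supported smooth function (automatically giving exact trace matching), approximates $\Pi u^-$ in $L^2$ and re-projects with $\Pi$ to preserve $\div=0$ and zero normal trace, and only then applies the small Neumann-corrector supported near $\p M^+$. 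Your final corrector step is in the same spirit as the paper's, but without the decomposition the preceding matching step fails. If you want to salvage your route, you would essentially be forced to introduce this (or an equivalent) splitting of $u^-$ before you can pair the approximants.
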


 The subscript ``$\rm{tr}$'' in \eqref{eq:H1div} stands for ``transmission''.
 The proof of Lemma \ref{lm:h_1} is contained in Appendix \ref{appendix_b}. We also include there
 the proofs of Proposition \ref{prop:estimate} and Corollary \ref{cor:estimate} below, which employ standard arguments used to show regularity estimates for the transmission problem for elliptic operators (see e.g. \cite{McLean-book}).

\begin{proposition}\label{prop:estimate}
Let $\bu\in D(P)$. 
We have the estimate
\begin{equation}\label{eq:prop_estimate}
\begin{aligned}
  &\|u^+\|^2_{H^2({M}^+)}+\|\div u^-\|^2_{H^1({M}^-)} \\
  &\qquad \leq C\Big(\|P^+u^+\|_{L^2(M^+)}^2+\|P^-u^-\|_{L^2(M^-)}^2 +\| u^+\|_{H^1(M^+)}^2+\| \div{u}^-\|_{L^2(M^-)}^2   \Big).
\end{aligned}
\end{equation}
\end{proposition}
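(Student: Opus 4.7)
The strategy is to decouple the estimate into independent bounds on the two sides of $\G$, which are then reconnected through the Neumann-type transmission condition $N(u^+)=\l_{\rm f}(\div u^-)\nu$ on $\G$. The key structural observation is that only $\div u^-$, and never $u^-$ itself, enters the coupling, so the non-ellipticity of $P^-$ is not an obstacle: one needs only $H^1$ control on the scalar quantity $\div u^-$, which can be read essentially directly off the equation.

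For the fluid estimate, the defining identity $P^-u^-=\r_{\rm f}^{-1}\n(\l_{\rm f}\div u^-)$ immediately gives $\n(\l_{\rm f}\div u^-)\in L^2(M^-)$, hence $\l_{\rm f}\div u^-\in H^1(M^-)$ with
\[
\|\div u^-\|_{H^1(M^-)}^2\le C\big(\|P^-u^-\|_{L^2(M^-)}^2+\|\div u^-\|_{L^2(M^-)}^2\big).
\]
The trace theorem then controls $\|\l_{\rm f}(\div u^-)\nu\|_{H^{1/2}(\G)}$ by the same right-hand side, supplying the Neumann data the solid side sees on $\G$.

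For the solid estimate, view $u^+$ as the solution of a Neumann boundary value problem for the Lam\'e operator $E$ on $M^+$, with $N(u^+)=0$ on $\p M$ and $N(u^+)=\l_{\rm f}(\div u^-)\nu$ on $\G$. The a priori membership $u^+\in H^1(M^+)$ is provided by Lemma~\ref{lm:h_1}, and the Neumann trace makes sense in $H^{-1/2}(\p M^+)$ through integration by parts against the $L^2$ datum $Eu^+=\r_{\rm s}P^+u^+$. Because $\mu_{\rm s}>0$, the Lam\'e system is strongly elliptic and the Neumann boundary operator satisfies the Lopatinski-Shapiro (covering) condition, so the classical $H^2$ regularity estimate for elliptic BVPs yields
\[
\|u^+\|_{H^2(M^+)}^2\le C\big(\|P^+u^+\|_{L^2(M^+)}^2+\|N(u^+)\|_{H^{1/2}(\p M^+)}^2+\|u^+\|_{H^1(M^+)}^2\big).
\]
Combining this with the fluid bound on $\|N(u^+)\|_{H^{1/2}(\G)}$ produces the asserted inequality.

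The main obstacle is the bookkeeping required to identify that an abstract element $\bu\in D(P)$, defined only through the Friedrichs extension of the quadratic form $q$, actually satisfies the classical Neumann-type transmission identity $N(u^+)=\l_{\rm f}(\div u^-)\nu$ on $\G$ and the Neumann condition $N(u^+)=0$ on $\p M$ as equalities of $H^{-1/2}$ traces. This is established by testing the defining relation $q(\bu,\bv)=(-P\bu,\bv)_{\calH^0}+(\bu,\bv)_{\calH^0}$ against smooth $\bv\in D(q)$ with support concentrated near either component of $\p M^+$, integrating by parts separately on $M^+$ and $M^-$, and exploiting that the kinematic condition $u^+\cdot\nu=u^-\cdot\nu$ is already built into $D(q)$ while the tangential part of $u^-$ is unconstrained. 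Once these weak boundary identities are in hand, the argument reduces via a partition of unity---separating neighborhoods of $\G$, of $\p M$, and the interior---to standard interior and boundary elliptic regularity, of which the fluid half is nearly trivial and the solid half is the classical $H^2$ theory for strongly elliptic systems with covering boundary conditions.
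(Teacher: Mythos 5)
Your proof is correct, and it takes a genuinely different and substantially shorter route than the one in the paper. The paper's proof (Appendix~\ref{appendix_b}) first reduces to the case $\Pi u^-=0$ via the potential $\omega$ solving the Neumann problem \eqref{eq:neumann_problem}, then establishes tangential regularity by difference quotients (Claims~\ref{cl_1} and~\ref{cl_2}), and finally treats normal derivatives by reading them off the PDE. In effect they re-run the standard elliptic-regularity machinery by hand because, as they remark, $P^-$ is not elliptic and the transmission results in \cite{McLean-book} are not immediately quotable. Your key observation is that the obstruction evaporates: the identity $\rho_{\rm f}P^-u^-=\nabla(\lambda_{\rm f}\div u^-)$ together with $\div u^-\in L^2(M^-)$ gives $\lambda_{\rm f}\div u^-\in H^1(M^-)$ with the desired bound for free, with no boundary or localization argument. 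The trace theorem then supplies $\lambda_{\rm f}(\div u^-)\nu\in H^{1/2}(\Gamma)$, and since this is precisely the conormal datum the solid sees, the solid-side estimate is the classical $H^2$ Neumann estimate for the strongly elliptic Lam\'e system (with the covering condition coming from coercivity, exactly as the paper itself invokes via \cite[Theorems 4.18, 4.20]{McLean-book} in the solid--solid case of Proposition~\ref{prop:estimate_ff}). The only remaining work is the one you flag: deducing from the abstract membership $\bu\in D(P)$, via testing $q(\bv,\bu)=(\bv,(-P+1)\bu)_{\calH^0}$ against $\bv\in D(q)$ and integrating by parts, that the weak conormal trace $N(u^+)\in H^{-1/2}(\partial M^+)$ equals $\lambda_{\rm f}(\div u^-)\nu$ on $\Gamma$ and $0$ on $\partial M$; that argument is sound (and is the same duality move the paper uses after Proposition~\ref{prop:estimate} to identify $D(P)\subset\calH^2_{\div,\rm{tr}}$, merely carried out in the weak $H^{-1/2}$ sense before rather than after the regularity is known). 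Your decomposition decouples the two sides cleanly and avoids both the reduction to $\Pi u^-=0$ and the difference-quotient calculation, at no cost; it is the shorter route.
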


By Proposition \ref{prop:estimate},
\begin{equation}\label{eq:transmission_domain}
\begin{aligned}
	&D(P)\subset \calH_{\div,\rm{tr}}^2:=\big\{(u^+,u^-)\in H^2(M^+;\C\otimes  TM)\times H^2_{\div}(M^-;\C\otimes  TM): \\
	&\qquad u^+\cdot \nu=u^-\cdot \nu  \text{ on } \Gamma, \quad
	  N(u^+)=\l_{\rm f} (\div u^- )\;\nu\text{ on }\Gamma,\quad 
	   N(u^+)=0\text{ on }\p M\big\}.
\end{aligned}
\end{equation}
The regularity follows directly from the estimate \eqref{eq:prop_estimate}, whereas the transmission conditions follow since $(P\bu,\bv)=(\bu,P\bv)$ for $\bu\in D(P)$, $\bv\in D(P_0)$.
Conversely, if $\bw\in \calH_{\div,\rm{tr}}^2$, an integration by parts and Cauchy Schwartz imply that $q(\bu,\bw)\leq C\|\bu\|_{L^2}$ for all $ \bu\in D(q)$. 
Thus we have:

\begin{proposition}\label{prop:domain}
The domain of the self-adjoint operator $P$ is given by
$D(P)=\calH_{\div,\rm{tr}}^2$.
\end{proposition}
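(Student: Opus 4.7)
The plan is to verify the two inclusions separately. The forward inclusion $D(P) \subset \calH_{\div,\rm{tr}}^2$ has already been set up in the preceding paragraph: the interior regularity $H^2(M^+) \times H^2_{\div}(M^-)$ comes from Proposition \ref{prop:estimate}, and the three transmission/boundary conditions are extracted by inserting the identity $(P\bu, \bv)_{\calH^0} = (\bu, P_0 \bv)_{\calH^0}$ and integrating by parts on each side, a step now legitimate since $\bu$ has the newly established regularity. Testing against $\bv \in D(P_0)$ with appropriately supported traces forces each of $u^+\cdot \nu = u^- \cdot \nu$ and $N(u^+) = \l_{\rm f}(\div u^-)\nu$ on $\G$, and $N(u^+) = 0$ on $\p M$.

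For the reverse inclusion $\calH_{\div,\rm{tr}}^2 \subset D(P)$, fix $\bw \in \calH_{\div,\rm{tr}}^2$. Since $H^2 \subset H^1$ and $H^2_{\div} \subset H^1_{\div}$, and the normal-trace matching is preserved upon passing to the weaker space, Lemma \ref{lm:h_1} yields $\bw \in D(q)$. By the Friedrichs characterization \eqref{eq:domain_general_P}, it then remains to show that $|q(\bu, \bw)| \leq C \|\bu\|_{\calH^0}$ for all $\bu \in D(q)$. By the density of $D(P_0)$ in $(D(q), \|\cdot\|_q)$ built into the Friedrichs construction, it suffices to treat $\bu \in D(P_0)$, where one may integrate by parts using the two identities displayed before Proposition \ref{prop:estimate} (now with the roles of ``smooth'' and ``$H^2$'' traded relative to their original statement). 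The resulting bulk term is $(\bu, -P_0 \bw)_{\calH^0}$; on $\p M$ the boundary integral vanishes because $N(u^+) = 0 = N(w^+)$, and on $\G$ the solid and fluid contributions cancel exactly after substituting the transmission conditions of $\bu \in D(P_0)$ and $\bw \in \calH_{\div,\rm{tr}}^2$. Thus $q_0(\bu, \bw) = (\bu, -P_0 \bw + \bw)_{\calH^0}$, and Cauchy--Schwarz delivers the required bound, with the byproduct $P\bw = P_0 \bw$.

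The main technical obstacle is justifying the integration-by-parts identities when $\bw$ has only the limited regularity of $\calH^2_{\div,\rm{tr}}$. This requires interpreting $N(w^+)$ as an $H^{1/2}(\p M^+)$ trace of $\s(w^+)$, the restriction $\div w^-|_\G$ via the $H^{1/2}(\G)$ trace of the $H^1$ function $\div w^-$, and $w^- \cdot \nu|_\G$ through the weak definition introduced just before Lemma \ref{lm:h_1}, and then checking that these dualities pair correctly in the boundary integrals. Once the trace identifications are in place, the algebraic cancellation on $\G$ is driven purely by the transmission conditions built into $D(P_0)$ and $\calH^2_{\div,\rm{tr}}$, and the density argument closes the proof.
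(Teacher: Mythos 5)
Your proposal is correct and follows essentially the same two-step structure as the paper: the forward inclusion from the regularity estimate of Proposition~\ref{prop:estimate} together with the symmetry $(P\bu,\bv)=(\bu,P_0\bv)$ (for $\bu\in D(P)$, $\bv\in D(P_0)$) to extract the transmission and boundary conditions, and the reverse inclusion by showing the bound $|q(\bu,\bw)|\leq C\|\bu\|_{\calH^0}$ via integration by parts, density of $D(P_0)$ in $D(q)$, and Cauchy--Schwarz. Your additional discussion of the trace interpretations needed to justify the integration by parts at the $H^2$-level fills in exactly what the paper leaves implicit.
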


The following corollary  will be useful in the justification of the parametrix.

\begin{corollary}\label{cor:estimate}
If $\bu\in D(P)$ with $P^\pm u^\pm \in H^k(M^\pm;\C\otimes TM)$, then for $k=0,1,2\dots $ we have
\begin{equation}\label{eq:cor_estimate}
\begin{aligned}
	&\|u^+\|^2_{H^{k+2}({M}^+)}+\|\div u^-\|^2_{H^{k+1}({M}^-)}, \\
	&\qquad \leq C\big(\|P^+u^+\|_{H^{k}(M^+)}^2+\|P^-u^-\|_{H^{k}(M^-)}^2 +\| u^+\|_{H^1(M^+)}^2+\| \div u^-\|_{L^2(M^-)}^2   \big).
\end{aligned}
\end{equation}
If $\bu=(u^+,u^-)\in D(P^m)$, $m\geq 1$ then $u^+\in H^{2m}(M^+;\C\otimes TM)$ and $u^-\in H_{\div}^{2m}(M^-;\C\otimes TM)$. 
\end{corollary}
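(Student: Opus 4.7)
Both assertions are proved by induction: the first on $k$, with base case supplied by Proposition~\ref{prop:estimate}; the second on $m$, using the first as its engine. For the inductive step of part~1, the plan is to parallel the proof of Proposition~\ref{prop:estimate} with one extra tangential derivative. Working in coordinates $(x',x_n)$ adapted to $\G$ (and similarly near $\p M$), apply a tangential derivative $\p_\tau=\p_{x'}$ to the system \eqref{sys_1_aux}--\eqref{tr_3_aux}. The differentiated field $\p_\tau\bu$ satisfies the same transmission problem modulo commutator terms that are one order lower and hence controlled by the induction hypothesis applied with index $k-1$. Interior ellipticity of $P^+$ recovers the remaining normal derivatives of $u^+$ from $P^+u^+$ and the already-controlled tangential derivatives, while the scalar $q:=\l_{\rm f}\div u^-$ satisfies the scalar elliptic equation $\div(\r_{\rm f}^{-1}\n q)=\div(P^-u^-)$, from which normal derivatives of $q$ are recovered. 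This is technically analogous to the boundary-regularity bootstrap for elliptic transmission problems in \cite[Ch.~4]{McLean-book}, and the routine details mirror those already carried out for Proposition~\ref{prop:estimate}.

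For part~2 the case $m=1$ is Proposition~\ref{prop:domain}. For the step $m\to m+1$, let $\bu\in D(P^{m+1})$, so that both $\bu\in D(P^m)$ and $P\bu\in D(P^m)$; applying the inductive hypothesis to each gives
\begin{align*}
  u^+\in H^{2m}(M^+),\quad \div u^-\in H^{2m-1}(M^-),\\
  P^+u^+\in H^{2m}(M^+),\quad \div(P^-u^-)\in H^{2m-1}(M^-).
\end{align*}
Setting $q:=\l_{\rm f}\div u^-$, the identity $\div(\r_{\rm f}^{-1}\n q)=\div(P^-u^-)\in H^{2m-1}(M^-)$ is a scalar elliptic equation for $q$. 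The decisive observation is that $P\bu\in D(P)$ satisfies the transmission condition $(P\bu)^+\cdot\nu=(P\bu)^-\cdot\nu$ on $\G$, which, since $P^-u^-=\r_{\rm f}^{-1}\n q$, reads $\p_\nu q=\r_{\rm f}(P^+u^+\cdot\nu)$ on $\G$. This Neumann-type condition for $q$ has data of Sobolev regularity $H^{2m-1/2}(\G)$, obtained as the trace of $P^+u^+\in H^{2m}(M^+)$; standard elliptic Neumann regularity (the compatibility condition is automatic from $\p_\nu q=\r_{\rm f}(P^-u^-\cdot\nu)$) then upgrades $q$ to $H^{2m+1}(M^-)$. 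Consequently $P^-u^-=\r_{\rm f}^{-1}\n q\in H^{2m}(M^-)$, and applying part~1 with $k=2m$ to $\bu\in D(P)$ yields $u^+\in H^{2m+2}(M^+)$ and $\div u^-\in H^{2m+1}(M^-)$, closing the induction.

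\textbf{The main obstacle.} The subtle point in part~2 is an apparent circularity: to invoke part~1 and improve $u^+$ by two derivatives one needs $P^-u^-$ two derivatives smoother, i.e.\ $\div u^-$ three derivatives smoother than the inductive hypothesis provides — which is essentially what we are trying to prove. The purely Dirichlet data $q|_\G=N(u^+)\cdot\nu$, available from $\bu\in D(P)$ alone, sits only in $H^{2m-3/2}(\G)$ and is insufficient to break this loop. The resolution is to exploit the \emph{second} layer of transmission conditions enjoyed by $P\bu$ (thanks to $\bu\in D(P^{m+1})$, not merely $D(P^m)$): this produces a Neumann boundary value for $q$ of the better regularity $H^{2m-1/2}(\G)$, which is exactly what Neumann elliptic regularity needs to deliver the extra two derivatives on $q$, after which part~1 finishes the job.
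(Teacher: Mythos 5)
Your proposal takes essentially the same route as the paper for both assertions. For part~2, the ``decisive observation'' you identify---using the second-layer transmission condition satisfied by $P\bu\in D(P)$ to obtain the Neumann datum $\p_\nu q=\r_{\rm f}\,(\nu\cdot P^+u^+)$ for $q:=\l_{\rm f}\div u^-$, then invoking scalar elliptic Neumann regularity to upgrade $q$---is precisely the paper's mechanism, which it records in inequality form as $\|P^-u^-\|_{H^r(M^-)}\le C\big(\|\div P^-u^-\|_{H^{r-1}(M^-)}+\|\l_{\rm f}\div u^-\|_{H^1(M^-)}+\|P^+u^+\|_{H^r(M^+)}\big)$ before ``pushing the induction through.''

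One technical point your sketch of part~1 elides: after applying a tangential derivative (the paper uses a Lie derivative $\calL_V$ along a vector field $V$ tangent to both $\G$ and $\p M$, to stay coordinate-free), the pair $(\calL_V u^+,\calL_V\td{u}^-)$ is generally \emph{not} in $D(P)$, because the Neumann-type transmission and boundary conditions are violated by commutator terms of lower order. Since the inductive estimate \eqref{eq:cor_estimate} is stated only for elements of $D(P)$, one cannot apply it directly. The paper repairs this by explicitly constructing a compensating field $w^+\in H^{k+2}(M^+;\C\otimes TM)$ via an extension operator so that $(\calL_V u^+ + w^+,\calL_V\td{u}^-)\in D(P)$, with $\|w^+\|_{H^{k+2}(M^+)}$ controlled by the already-known norms of $u^+$ and $\td{u}^-$, and only then invokes the inductive hypothesis. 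Your phrase ``satisfies the same transmission problem modulo commutator terms that are one order lower'' is morally right, but as written it would let you apply \eqref{eq:cor_estimate} to something outside $D(P)$; either the correction $w^+$ or an inhomogeneous variant of the estimate is needed to make the step rigorous.
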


\smallskip

\subsection{Well posedness}

Since $(-P\bu,\bu)\geq 0$ for $\bu\in D(P)$, there exists a unique non-negative self-adjoint square root of $-P$, written as $\sqrt{-P}$, and its domain is the completion of $D(P)$ in the graph norm, which implies that $D(\sqrt{-P})=D(q)=\calH^1_{\rm{div,tr}}$.
Moreover, by the functional calculus we can define the operators
$\cos(\sqrt{-P}\;t)$ and $\frac{\sin{(\sqrt{-P}\;t)}}{\sqrt{-P}}=t\text{sinc}(\sqrt{-P}\;t)$, 
which are strongly continuous in $t$ and satisfy
$\cos(\sqrt{-P}\;t)D(P)\subset D(P)$, $\frac{\sin{(\sqrt{-P}\;t)}}{\sqrt{-P}}D(\sqrt{-P})\subset D(P)$.
Now set 
\begin{equation}\label{aux_time}
	\bu(t)=\cos\left(\sqrt{-P}\;t\right)\bu_0+ \frac{\sin{(\sqrt{-P}\;t)}}{\sqrt{-P}}\bw_0\in D(P),
\end{equation}
where $\bu_0\in D(P)$ and $\bw_0\in D(\sqrt{-P})$, which solves \begin{equation}
	\p_t^2 \bu=P \bu,\quad \bu\big|_{t=0}=\bu_0,\quad \p_t\bu\big|_{t=0}=\bw_0
\end{equation}
subject to transmission  conditions satisfied by elements of $D(P)$, i.e. it solves (\ref{sys_1_aux}-\ref{tr_3_aux}).
We also have $\p_t\bu\in D(\sqrt{-P})\subset \calH^0$ for each $t$, thus
the energy
\begin{align}
	\calE(t)&=(\bu,-P \bu)_{L^2}+\|\p_t \bu\|^2_{L^2}, \\
			&=\|\Div(u^+)\|^2_{L^2(M^+,\l_{\rm s}dv_g)}+\|d^{\rm s}(u^+)\|^2_{L^2(M^+,2\mu_{\rm s} dv_g)}+\|\div u^-\|^2_{L^2(M^-,\l_{\rm f}dv_g)}+\|\p_t \bu\|^2_{L^2}\label{aux_energy}
\end{align}
is well defined for all time.
Moreover, since $\bu \in D(P)$ and $\p_t\bu\in D(\sqrt{-P})=\calH^1_{\rm{div,tr}}$, we can check that $\calE$ is constant upon differentiating $\calE$ in time and substituting $\p_t^2\bu=P\bu $ in $\calE'(t)$.

Returning to the original system (\ref{sys_1}-\ref{tr_3}) with the substitutions mentioned earlier, \eqref{aux_energy} implies that the energy
\begin{equation}\label{energy}
\begin{aligned}
		\|\Div(u^+)\|^2_{L^2(M^+,\l_{\rm s}dv_g)}+\|d^{\rm s}(u^+)\|^2_{L^2(M^+,2\mu_{\rm s} dv_g)}+\|w^+\|^2_{L^2(M^+,\r_{\rm s}dv_g)}&
\\
		+\|v^-\|^2_{L^2(M^-,\r_{\rm f} dv_g)}+\|p^-&\|^2_{L^2(M^-,\l_{\rm f}^{-1}dv_g)}
	\end{aligned}
\end{equation}
	is constant.
Now call $\calH$ the image of $D(P)\times D(\sqrt{-P})$ under the map
\begin{equation}\label{eq:map}
	D(P)\times D({\sqrt{-P}}\displaystyle)\ni ((u^+,u^-),(w^+,w^-))\mapsto(u^+,w^+,-\l_{\rm f} \div u^-,w^-)=  (u^+,w^+,p^-,v^-).
\end{equation}
Since $D(\sqrt{-P})=\calH^1_{\rm{div,tr}}$, by Proposition \ref{prop:domain} and Lemma \ref{lm:h_1} it follows that $\calH$ is contained in
\begin{align}
	  \Big\{ (u^+,w^+,p^-,v^-)\in H^2(M^+;\C\otimes TM)\times H^1(M^+;\C\otimes TM)\times H^1 (M^-)\times H^1_{\div}(M^-;\C\otimes TM):\quad\\*
	\t(w^+)\cdot \nu=\t(v^-\cdot\nu)\text{	and } N(u^+)=-p^- \nu \text{ on }\G,
	 \; N(u^+)=0\text{ on }\p M,\quad \label{eq:domain_init_system}\\*
	  \;\int_{M^-}p^- /\l_{\rm f}dv_g=\int_\G u^+\cdot \nu dA \Big\}.
\end{align}
It turns out that $\calH$ is actually equal to \eqref{eq:domain_init_system}:
given $(u^+,w^+,p^-,v^-)$ as  in \eqref{eq:domain_init_system}, one can produce $u^-\in H^2_{\div}(M^-;\C\otimes TM)$ such that $((u^+,u^-),(w^+,w^-))\in D(P)\times D({\sqrt{-P}}\displaystyle)$ is in its preimage under the map \eqref{eq:map} by solving $\Delta \w=-p/\l_{\rm f}$ on $M^-$, $\p_\nu \w=u^+\cdot \nu$ on $\G$ and taking $u^-=\n \w$ (just like in \eqref{eq:potential_neu}).
The last condition in \eqref{eq:domain_init_system} guarantees the solvability of this problem and the various transmission conditions are not hard to check.
Note that the map \eqref{eq:map} has non-trivial kernel; however,  any element in its kernel has 0 energy.
Therefore any solution of (\ref{sys_1_aux}-\ref{tr_3_aux}) produced by an element in the kernel as initial data via \eqref{aux_time} will be of constant 0 energy and will thus be constant, staying in the kernel for all times.
We have shown the following:
\begin{proposition}
	The system (\ref{sys_1}-\ref{tr_3}) subject to initial conditions  contained in the space $\calH$  given by \eqref{eq:domain_init_system} has a unique solution in $C(\R;\calH)$.
	The solution is given by the image of $(\bu,\p_t\bu)$ as in \eqref{aux_time} under the map \eqref{eq:map}, where the initial data $\bu_0$, $\bw_0$ are produced by given initial data in $\calH$ using the procedure described after \eqref{eq:domain_init_system}.
	The energy \eqref{energy} of such a solution is constant.
	\end{proposition}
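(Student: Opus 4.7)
The plan is to leverage the correspondence between the first-order system and the auxiliary second-order system that was set up in the discussion preceding the proposition, and to transfer the well posedness for the latter (produced via functional calculus on the self-adjoint operator $P$) to the former.

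First, for existence, given initial data $(u_0^+,w_0^+,p_0^-,v_0^-) \in \calH$, I would follow the recipe laid out right after \eqref{eq:domain_init_system}: solve the Neumann problem $\Delta \w_0 = -p_0^-/\l_{\rm f}$ in $M^-$, $\p_\nu \w_0 = u_0^+\cdot \nu$ on $\G$, whose solvability is precisely what the compatibility condition in \eqref{eq:domain_init_system} (which is just Assumption~\ref{as:int}) provides; then set $u_0^- = \n \w_0$ and $w_0^- = v_0^-$. Using Proposition~\ref{prop:domain} and Lemma~\ref{lm:h_1}, the resulting pair $((u_0^+,u_0^-),(w_0^+,w_0^-))$ lies in $D(P)\times D(\sqrt{-P})$. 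Define $\bu(t)$ by \eqref{aux_time}; the functional calculus (together with the invariance properties $\cos(\sqrt{-P}\,t)D(P)\subset D(P)$ and $\frac{\sin(\sqrt{-P}\,t)}{\sqrt{-P}}D(\sqrt{-P})\subset D(P)$ noted before \eqref{aux_time}) gives $\bu\in C(\R;D(P))$ and $\p_t\bu\in C(\R;D(\sqrt{-P}))$ solving (\ref{sys_1_aux}--\ref{tr_3_aux}). Applying the map \eqref{eq:map} pointwise in $t$ and setting $w^+ = \p_t u^+$, $v^- = \p_t u^-$, $p^- = -\l_{\rm f}\div u^-$ produces a solution of (\ref{sys_1}--\ref{tr_3}). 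Continuity of this solution in $\calH$ follows from continuity of $\bu$ in the $D(P)$-graph norm, continuity of $\p_t\bu$ in the $D(\sqrt{-P})$-graph norm, and boundedness of the map \eqref{eq:map} from $D(P)\times D(\sqrt{-P})$ into \eqref{eq:domain_init_system}; preservation in time of the integral compatibility condition in \eqref{eq:domain_init_system} follows from the divergence theorem applied to \eqref{tr_1_aux}.

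For uniqueness, suppose two solutions of (\ref{sys_1}--\ref{tr_3}) share the same Cauchy data in $\calH$. Lift each of them to the auxiliary system by the procedure described before the proposition, namely $u^-(t) = u_0^- + \int_0^t v^-(\tau)\,d\tau$ with the same choice of $u_0^-$; the lifts produce Cauchy data in $D(P)\times D(\sqrt{-P})$ that differ by a pair $(0,z_0)$ in the kernel of \eqref{eq:map}. Since the energy \eqref{aux_energy} of the difference of the two lifts is initially zero, it stays zero for all time, forcing the lifts to differ only by constants (in $t$) in the kernel of \eqref{eq:map}, which project back to the zero difference on the original system side. Thus the original solution is unique.

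Finally, energy conservation \eqref{energy} follows directly from that of \eqref{aux_energy}: since $\bu(t)\in D(P)$ and $\p_t\bu(t)\in D(\sqrt{-P}) = \calH^1_{\div,\rm{tr}}$, one may differentiate $\calE(t)$ in $t$, substitute $\p_t^2\bu = P\bu$, and integrate by parts; the boundary and interface terms cancel thanks to the transmission conditions built into $D(P)$, yielding $\calE'(t)=0$. Under the substitutions $w^+=\p_tu^+$, $v^-=\p_tu^-$, $p^-=-\l_{\rm f}\div u^-$, the functional \eqref{aux_energy} becomes \eqref{energy}, so the latter is conserved. The main technical point, which is already absorbed into the preceding theory (Proposition~\ref{prop:domain} and Corollary~\ref{cor:estimate}), is precisely that the functional calculus delivers solutions lying in $D(P)$ pointwise in $t$, rather than merely in the form domain $D(\sqrt{-P})$; without this, the pointwise validity of the transmission conditions defining $\calH$ would be in doubt.
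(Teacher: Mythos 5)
Your proposal is correct and follows essentially the same route as the paper: lift to the second-order displacement system, use the Friedrichs self-adjoint extension $P$ and functional calculus for existence and regularity, appeal to the explicit characterizations $D(P)=\calH^2_{\div,\rm{tr}}$ and $D(\sqrt{-P})=\calH^1_{\div,\rm{tr}}$ to land in $\calH$, and use energy conservation for the auxiliary system (together with the observation that the kernel of the map \eqref{eq:map} consists of zero-energy constants-in-time) for uniqueness. Your closing remark correctly identifies the crucial role of knowing $D(P)$ explicitly, which is exactly why the paper goes to the trouble of proving Proposition~\ref{prop:domain}.
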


\section{The Solid-Fluid Transmission System and Geometric Optics}\label{sec:solid_fluid}

\subsection{Transforming the system in the fluid region}
\label{sec:acoustic}

It will be convenient to work with a potential in the fluid region.
Let $(u^+,w^+,p^-,v^-)$ be the solution to (\ref{sys_1}-\ref{tr_3}) subject to initial data \eqref{eq_init}. 
To simplify the notations, henceforth we assume that the vector fields and functions we use are sections of appropriate regularity of $TM$ and $M\times \R$ respectively, instead of their complexified counterparts.
As mentioned earlier, the existence of real solutions of (\ref{sys_1_aux}-\ref{tr_3_aux}) is justified since we can find them by taking the real part of complex ones.
Then we can pass to real solutions of (\ref{sys_1}-\ref{tr_3}).
If in the initial data \eqref{eq_init} we have $\r_{\rm f} v_0^-\in H^1_{\Div}(M^-;TM)$ and $w_0\in H^1(M^+;TM)$, there exists a potential ${\psi}_0^-\in  H^2(M^-)$ and a divergence free vector field $ Z_0\in L^2(M^-;TM)$ with $\t(Z_0\cdot \nu)=0$ on $\G$ (in a weak sense) such that
\begin{equation}\label{helmholtz}
	\r_{\rm f} v^-_0= Z_0-\n {\psi}_0^-.
\end{equation}
They can be found by solving up to a constant
\begin{equation}\label{eq:neumenn_pot_density}
	\Delta {\psi}_0^- =-\div (\r_{\rm f} v_0^-)\text { on } M^-, \quad \p_\nu {\psi}_0^- =-\r_{\rm f} w^+_0\cdot \nu \text{ on }\G ,
\end{equation} 
and taking $  Z_0=\r_{\rm f}v_0^-+\n{\psi}_0^-$.
Here \eqref{eq:neumenn_pot_density} is solvable because of the transmission condition \eqref{tr_1} which guarantees that $\int_{M^-}-\div (\r_{\rm f} v_0^-)dv_g=\int_{\G}\r_{\rm f} w^+_0\cdot \nu \,dA$.
Setting 
\begin{equation}\label{eq:psi-pi}
	{\psi}^-(t)={\psi}_0^-+\int_0^tp^-(\t)d\t,
\end{equation}
we find that $\r_{\rm f}v^-+\n {\psi}^-$ is constant in time by \eqref{sys_3}, so by \eqref{helmholtz},
\begin{equation}
	\r_{\rm f} v^-(t)=  Z_0-\n {\psi}^-(t).\label{eq:decomposition_momentum_density}
\end{equation}
Now one checks that $(u^+,\psi^-)$ satisfies the following system of hyperbolic equations subject to transmission conditions and Cauchy data:
\begin{subequations}
\begin{align}
	\p_t^2u^+-\r_{\rm s}^{-1} Eu^+=0&\qquad \text{ in }M^+\times \R\label{pot_sys_1}, \\*
	\p_t^2{\psi}^--\l_{\rm f}\div(\r_{\rm f}^{-1}\n{\psi}^-)=F&\qquad \text{ in }M^-\times \R\noeqref{pot_sys_2}\label{pot_sys_2}, \\*
	\nu\cdot\p_tu^+=-\r_{\rm f}^{-1}\; \p_{\nu}{\psi}^-&\qquad \text{ on }\Gamma\times \R\label{pot_tr_1}, \\*
N(u^+)=-\p_t{\psi}^-\; \nu&\qquad \text{ on }\Gamma\times \R \label{pot_tr_2}\noeqref{pot_tr_2}, \\*
N(u^+)=0&\qquad \text{ on } \p M\times \R,\label{pot_tr_3}
\intertext{
with}
\label{pot_eq_init}
	(u^+,\p_tu^+,{\psi}^-,\p_t{{\psi}^-})\big|_{t=0}=&(u_0^+,w_0^+,{\psi}_0^-,p_0^-),
	\end{align}
\end{subequations}
where $F(x)=-\l_{\rm f}(\n\r_{\rm f}^{-1}) \cdot Z_0(x)$ is constant in time and Assumption \ref{as:int} applies to the initial data in \eqref{pot_eq_init}.
Note that by the construction of the initial potential ${\psi}_0^-$, $Z_0$ has no effect on the transmission conditions.
Moreover, the fact that $\psi_0^-$ is determined by $v_0^-$ up to constant is of no serious consequence.
Indeed, if $(u^+,\psi^-)$, $(u^+{}',\psi^-{}')$ are two solutions of (\ref{pot_sys_1}-\ref{pot_tr_3}) for which the initial data \eqref{pot_eq_init} differ by  $(0,0,c,0)$ for some constant $c$, then their difference $(\td{u}^+,\td{\psi}^-)$ satisfies a homogeneous version of (\ref{pot_sys_1}-\ref{pot_tr_3}), i.e. with $F=0$, for which the energy
\begin{align}
  \|\Div(\td{u}^+)\|^2_{L^2(M^+,\l_{\rm s}dv_g)}+\|d^{\rm s}(\td{u}^+)\|^2_{L^2(M^+,2\mu_{\rm s} dv_g)}+\|\p_t \td{u}^+\|^2_{L^2(M^+,\r_{\rm s} dv_g)}\qquad\\*
  +\|\p_t\td{\psi}^-\|^2_{L^2(M^-,\l_{\rm f}^{-1}dv_g)}+\|\n 
\td{\psi}^-\|^2_{L^2(M^-,\r_{\rm f}^{-1}dv_g)}
\end{align}
is constant in time. Therefore, $(\td{u}^+,\td{\psi}^-)=(0,c)$ for all time.

Conversely, we can produce a solution of (\ref{sys_1}-\ref{tr_3}) with initial conditions \eqref{eq_init}  by first decomposing $\r v_0^-$ using \eqref{helmholtz}, then solving (\ref{pot_sys_1}-\ref{pot_tr_3}) subject to \eqref{pot_eq_init}, and finally setting
\begin{equation}\label{going_back}
	(u^+,w^+,p^-,v^-)=(u^+,\p_tu^+,\p_t{\psi}^-,\r_{\rm f}^{-1}(  Z_0-\n {\psi}^-)).
\end{equation}
Note that the fact that $\psi_0^-$ is only determined up to constant does not affect \eqref{going_back}.

For the construction of our parametrix we will make the following assumption:
\begin{assumption}\label{as:cpct_sup}
  The initial data \eqref{eq_init} are supported away from $\G$ and $\p M$.
\end{assumption}

\noindent Note that this assumption implies that $\psi_0^-$ is smooth near the interface $\G$, by \eqref{eq:neumenn_pot_density} and elliptic regularity (notice that the Neumann condition becomes homogeneous under Assumption \ref{as:cpct_sup}).
Thus by \eqref{eq:decomposition_momentum_density}, $Z_0$ is also smooth near $\G$.
Now let $\chi \in C_c^\infty(M^-)$ be 1 in a neighborhood of the singular support of $Z_0$ and $\psi_0^-$.
With the techniques we use in Section \ref{ssec:justification} to justify the parametrix, it can be shown that the difference of a solution of (\ref{pot_sys_1}-\ref{pot_eq_init}) from one of the same system, but with $F$ replaced by $\chi F$ and $\psi_0^-$ replaced by $\chi \psi_0^-$, is smooth up to $\G$ and $\p M$.
Hence for the purposes of studying the transmission problem microlocally, \textit{it can be assumed without loss of generality that $\psi_0^-$ and $F$ are supported away from $\G$ (for all time in the case of the latter, since it does not depend on $t$), and we henceforth assume that this is the case}.
If the fluid is initially at rest (i.e. $v_0^-=0$) then $\psi_0^-=0$ and $Z_0=0$, hence $F$ vanishes, leading to \eqref{pot_sys_2} being homogeneous. 

The following justifies that, with Assumption \ref{as:cpct_sup} in effect, it suffices to use the system \eqref{pot_sys_1}-\eqref{pot_tr_3} to study the transmission problem microlocally:
 (also see the discussion below on traces at the interface $\G$):

\begin{lemma}\label{lm:wf}
In any conical neighborhood $U$ in $T^*(M^-\times \R)$ with  $U\cap\WF(Z_0)=\emptyset$ we have $\WF(\psi^-)=\WF(p^-)=\WF(v^-)$.
\end{lemma}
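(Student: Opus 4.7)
The plan is to exploit the two algebraic relations
\[
p^- = \p_t \psi^-,\qquad \r_{\rm f} v^- = Z_0 - \n \psi^-,
\]
coming from \eqref{eq:psi-pi} and \eqref{eq:decomposition_momentum_density}, together with the wave equation \eqref{pot_sys_2} that $\psi^-$ satisfies. Pseudolocality of differential operators immediately yields the easy inclusions
\[
\WF(p^-) \subset \WF(\psi^-),\qquad \WF(v^-) \subset \WF(\psi^-)\cup \WF(Z_0),
\]
so, restricting to $U$, which is disjoint from $\WF(Z_0)$, both $\WF(p^-)\cap U$ and $\WF(v^-)\cap U$ lie in $\WF(\psi^-)\cap U$. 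The reverse inclusions will come from microlocal ellipticity of $\p_t$ and $\n$ on $\WF(\psi^-)\cap U$.

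Since $\r_{\rm f}$ and $\l_{\rm f}$ are smooth, the source $F = -\l_{\rm f}(\n\r_{\rm f}^{-1})\cdot Z_0$ in \eqref{pot_sys_2} satisfies $\WF(F)\subset \WF(Z_0)$, so $\WF(F)\cap U = \emptyset$, and $\psi^-$ microlocally solves the homogeneous wave equation $\p_t^2\psi^- - \l_{\rm f}\div(\r_{\rm f}^{-1}\n \psi^-) = 0$ in $U$. Its principal symbol is $\t^2 - c_{\rm f}^2|\xi|^2$ (with $|\xi|^2$ computed in the dual metric), so by the standard fact that wave front sets of microlocal solutions lie in the characteristic variety of the operator,
\[
\WF(\psi^-)\cap U \;\subset\; \bigl\{(t,x,\t,\xi):\t^2 = c_{\rm f}^2|\xi|^2\bigr\}\setminus 0.
\]
On this set $\t = 0$ forces $\xi = 0$ and conversely, so after deleting the zero section I obtain $\WF(\psi^-)\cap U \subset \{\t\neq 0\}\cap\{\xi\neq 0\}$.

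The operator $\p_t$ is microlocally elliptic where $\t\neq 0$, so applied to $\p_t\psi^- = p^-$ it yields $\WF(\psi^-)\cap U \subset \WF(p^-)$. The spatial gradient $\n$, viewed as a scalar-to-vector operator, is microlocally elliptic where $\xi\neq 0$, so applied to $\n \psi^- = Z_0 - \r_{\rm f}v^-$ it gives
\[
\WF(\psi^-)\cap U \;\subset\; \WF(\n\psi^-)\cap U \;\subset\; \bigl(\WF(Z_0)\cup \WF(v^-)\bigr)\cap U \;=\; \WF(v^-)\cap U.
\]
Combined with the easy direction this produces the desired equalities. The only nonroutine point is verifying that $\psi^-$ solves the homogeneous wave equation microlocally in $U$; this reduces entirely to the observation $\WF(F)\subset \WF(Z_0)$, after which the conclusion is a direct application of microlocal ellipticity and the standard characterization of wave front sets for microlocal solutions of a wave equation.
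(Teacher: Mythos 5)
Your proof is correct and follows essentially the same route as the paper's: use $\WF(F)\subset\WF(Z_0)$ to see that $\psi^-$ solves the homogeneous wave equation microlocally in $U$, locate $\WF(\psi^-)\cap U$ in the characteristic variety $\Sigma_{\rm f}$ where $\tau\neq 0$ and $\xi\neq 0$, and then apply microlocal ellipticity of $\partial_t$ and $\nabla$ to recover $\psi^-$ from $p^-$ and (via $Z_0-\rho_{\rm f}v^-$) from $v^-$. The only cosmetic difference is that you invoke ellipticity of the overdetermined operator $\nabla$ directly, whereas the paper packages it as a square Douglis--Nirenberg elliptic system after choosing a nonvanishing component of $\xi$; the content is identical.
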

\begin{proof}
Note that $\WF(F)\subset \WF(Z_0)\subset \big\{(x,t,\xi,\t)\in T^*(M\times\R)\setminus 0:\t=0\big\}$.
By \eqref{pot_sys_2} and the analogous hyperbolic equation $\p_t^2{p}^--\l_{\rm f}\div(\r_{\rm f}^{-1}\n{p}^-)=0 $ for $p^-$ it follows that
\begin{equation}
  U\cap\big(\WF(p^-)\cup \WF(\psi^-)\big)\subset \Sigma_{\rm f}:= \big\{(x,t,\xi,\t)\in T^*(M^-\times\R)\setminus 0:c_{\rm f}^{-2}\t^2=|\xi|^2_g\big\},
\end{equation} where we set $c_{\rm f}=\sqrt{\l_{\rm f}/\r_{\rm f}}$ for the  speed of the fluid.
Since $p^-=\p_t \psi^-$ and $\p_t$ is elliptic in a conical neighborhood of $\Sigma_{\rm f}$, we conclude that in $U$ we have $\WF(p^-)= \WF(\psi^-)$.
By \eqref{eq:decomposition_momentum_density}, on $U$ we have $ \WF(v^-)=\WF(\n \psi^-)\subset \WF(\psi^-)$.
Since the metric is non-degenerate, in terms of local coordinates in a conical neighborhood of a covector $\zeta=(x,t,\xi,\t)\in \Sigma_{\rm f}$ at least one of the $\xi^k:=\sum_{j=1}^3g^{kj}\xi_j$, $k=1,2,3$,  is non-zero.
Assume that $\xi^3$ is non-zero (without loss of generality). Then 
write $\n \psi^-=A (0,0,\psi^-)^T$, where $A$ is a matrix differential operator with principal symbol $i\begin{pmatrix}
  1 & 0 &\xi^1\\
  0 & 1 &\xi^2\\
  0 & 0 &\xi^3
\end{pmatrix}$.
Then $A$ is elliptic in a conical neighborhood of $\zeta$, in the Douglis-Nirenberg sense, which shows that $\WF(\psi^-)=\WF(\n \psi^-)$, proving the claim.
\end{proof}

\smallskip

We will use the geometric optics construction to produce solutions up to a smooth error to the acoustic equation \eqref{pot_sys_2} in the fluid region near the interface $\G$.
We focus on what happens near the interface; away from it, the construction of a parametrix for \eqref{pot_sys_2} given initial data $\psi^-\big|_{t=0}$, $\p_t\psi ^-\big|_{t=0}$ can be carried out using geometric optics and Duhamel's formula (see e.g \cite{MR1269107}, \cite{MR618463}).
Choose local coordinates $(x',x_3,t)=(x_1,x_2,x_3,t)$ near a point  $(p_0,t_0)\in \G\times \R$
such that the interface $\G\times \R$ is given locally by $x_3=0$ and the unit normal is $\nu=-\p_{x_3}\big|_{\G}$ (so the solid region $M^+$ is locally given as  $x_3>0$).
 This can be done by using semigeodesic normal coordinates for $\Gamma$ centered at $p_0$.
We further assume that the metric is Euclidean at $p_0$ with our choice of spatial coordinates and below, we compute various symbols at that point; this simplifies the notation. 

Suppose we are given $f\in \calE'(\G\times \R)$ in a small neighborhood of $(p_0,t_0)$.
A covector $(x',t,\xi',\t)\in \WF(f)$ lies in one of the following three regions with respect to the acoustic speed of the fluid:
\begin{enumerate}
	\item Hyperbolic: $\{(x',t,\xi',\t)\in T^*(\G\times\R)\setminus 0:c_{\rm f}^{-2}\t^2-|\xi'|^2_g>0\}$,
	\item Elliptic: $\{(x',t,\xi',\t)\in T^*(\G\times\R)\setminus 0:c_{\rm f}^{-2}\t^2-|\xi'|^2_g<0\}$,
	\item Glancing: $\{(x',t,\xi',\t)\in T^*(\G\times\R)\setminus 0:c_{\rm f}^{-2}\t^2-|\xi'|^2_g=0\}$.
\end{enumerate}
In the three sets above $c_{\rm f}$ is always evaluated at $x=(x',0)$.
In everything below we assume that the wave front set of $f$ is disjoint from the glancing region.
As an intermediate step towards constructing a parametrix for the transmission problem, we seek approximate solutions up to smooth error for \eqref{pot_sys_2} in the fluid region  using $f$ as Dirichlet data on the interface; their form depends on whether $\WF(f)$ is contained in the hyperbolic or elliptic region.
We will also need to relate Dirichlet data with Neumann data at $\G\times \R$ using the Dirichlet to Neumann (DtN) map for an incoming or outgoing solution, see below.

\smallskip

First suppose that $\WF(f)$ is contained in the connected component of the \textbf{hyperbolic} region where $\t<0$ (in the $\t>0$ component the arguments are similar).
We extend $\l_{\rm f}$ and $\r_{\rm f}$ smoothly in a neighborhood of $p_0$ and use the geometric optics ansatz to produce an outgoing/incoming solution $\psi_{\Out/\In}^-$ which solves, in some neighborhood $\calU$ of $(p_0,t_0)$ in $M\times \R$,
\begin{align}
	\p_t^2\psi_{\Out/\In}^--\l_{\rm f}\div(\r_{\rm f}^{-1}\n\psi_{\Out/\In}^-)=&0 \quad  \text{ in } \calU\mod C^\infty\label{eq:psi_out_in}, \\
	\psi_{\Out/\In}^-
\big|_{\Gamma\times \R}=&f\quad \text{ in }\calU\cap (\G\times \R)\mod C^\infty,\label{eq:psi_out_in_b}\\
\psi_{\Out/\In}^-
\big|_{t\ll t_0/t\gg t_0}=&0\quad \text{ in } \calU\cap \o{M}^- \mod C^\infty.\label{eq:psi_out_in_c}\noeqref{eq:psi_out_in_c}
\end{align}
The defining property of the outgoing (resp. incoming) solution is that its singularities propagate to the future (resp. past) in the fluid region, along the null bicharacteristics of $c_{\rm f}^{-2}\t^2-|\xi|^2_g$.
Note that the equation \eqref{eq:psi_out_in} is taken to be homogeneous because we can assume without loss of generality that the source in \eqref{pot_sys_2} is supported outside of $\calU$, by Assumption \ref{as:cpct_sup}.
Taking a trace in \eqref{eq:psi_out_in_b} makes sense, since $\WF(\psi_{\Out/\In}^-)$ is disjoint from the conormal bundle of $\G\times \R$.
The outgoing/incoming parametrix has the form (see \cite{MR618463})
\begin{equation}\label{eq:go}
	\psi_{\Out/\In}^-=\frac{1}{(2\pi)^3}\int e^{i\varphi_{\Out/\In}^-(x',x_3,t,\xi',\t)}a_{\Out/\In}^-(x',x_3,t,\xi',\t)\widehat{f}(\xi',\t)d\xi'd\t,
\end{equation}
where the phase function solves an eikonal equation
   and satisfies
\begin{equation}\label{eq:phase}
	\varphi_{\Out/\In}^-(x',x_3,t,\xi',\t)=x'\cdot \xi'+t\t\mp x_3\sqrt{c_{\rm f}^{2}\t^2-|\xi'|^2_g}+O(x_3^2);
\end{equation}
the amplitude $a_{\Out/\In}^-$
solves a transport equation
with $a_{\Out/\In}^-\big|_{\Gamma\times \R}=1$ (the signs of the square root term in \eqref{eq:phase} are switched for the outgoing/incoming solution when $\WF(f)$ is contained in the component of the hyperbolic region where $\t > 0$).
In \eqref{eq:go}, $\widehat{\cdot}$ denotes the Fourier transform.

Differentiating $\psi_{\Out/\In}^-$ in the direction of $\nu=-\p_{x_3}$ we obtain the outgoing/incoming DtN map associated with our parametrix, which is defined by 
\begin{equation}
	\Lambda^-_{\Out/\In}f=\p_{\nu} \psi_{\Out/\In}^-\big|_{\Gamma\times \R}
\end{equation}
and is a pseudodifferential operator of order 1 on $\Gamma\times \R$ with principal symbol 
\begin{equation}
\s_{p_0}(\Lambda^-_{\Out/\In})=\pm i\sqrt{c_{\rm f}^{-2}\t^2-|\xi'|^2_g}.
\end{equation}

Now suppose that $f\in \calE'(\G\times \R)$ has wave front set in the \textbf{elliptic} region for the fluid, %
with $\t<0$. One can produce a parametrix for 
\begin{equation}\label{approx_pde}
\begin{aligned}
	\p_t^2\psi_\Ev^--\l_{\rm f}\div(\r_{\rm f}^{-1}\n\psi_\Ev^-)=&0 \quad  \text{ in } \calU\mod C^\infty\\
	\psi_\Ev^-
\big|_{\Gamma\times \R}=&f\quad \text{ in }\calU\cap (\G\times \R)\mod C^\infty
\end{aligned}
\end{equation}
 in the form \eqref{eq:go} but the phase function $\varphi^-_\Ev$ will now not be real.
To avoid exponentially growing waves we require that $\Im\varphi^-_\Ev\geq0$, which leads to evanescent waves. The phase function can be constructed asymptotically up to $O(x_3^\infty)$, having an expansion
\begin{equation}
	\varphi^-_\Ev(x',x_3,t,\xi',\t)\sim x'\cdot \xi'+t\t - x_3 i \sqrt{|\xi'|^2_g-c_{\rm f}^{-2}\t^2}+\sum_{j=0}^\infty x_3^{2+j}\td{\psi}_j(x',t,\xi',\t),
\end{equation}
where $\td{\psi}_j$ are symbols of order 1 (recall that we are interested in constructing an evanescent wave in the region $x_3\leq 0$, which dictates the negative sign in the square root term).
For more details on the construction see \cite{MR618463}, \cite{stefanov2020transmission}.
The corresponding microlocal DtN map 
\begin{equation}
	\Lambda_\Ev^-f=\p_{\nu}\psi^-_\Ev\big|_{\G\times \R}
\end{equation}
is a pseudodifferential operator on $\G\times \R$ with principal symbol
\begin{equation}
	\s_{p_0}(\Lambda_\Ev^-)=-\sqrt{|\xi'|^2_g-c_{\rm f}^{-2}\t^2}.\label{eq:ac_ell_dtn}
\end{equation}

\subsection{The elastic wave equation}
\label{sec:elastic}

On the solid side we follow \cite{stefanov2020transmission} to simplify the analysis.
A body wave in an isotropic elastic solid with constant Lam\'e parameters splits into a sum of a longitudinal wave ($\rm p$-wave) and a transversal one ($\rm s$-wave). 
The wave speed of the former is given by $c_{\rm p}=\sqrt{(\l_{\rm s}+2\mu_{\rm s})/\r_{\rm s}}$ whereas the one of the latter is given by $c_{\rm s}=\sqrt{\mu_{\rm s}/\r_{\rm s}}$. Note that $c_{\rm s}<c_{\rm p}$, since $\l_{\rm s}$, $\mu_{\rm s}>0$.
A p-wave (resp. s-wave) propagates singularities along the null bicharacteristics of $\t^2-c_{\rm p}^2|\xi|_g^2$ (resp. $\t^2-c_{\rm s}^2|\xi|_g^2$).
In our case the Lam\'e parameters and density are not constant,
however as shown in \cite{stefanov2020transmission}, in this setting one can decouple the system defined by the elastic wave equation up to smoothing operators.
By constructing a parametrix for the decoupled system one obtains  a microlocal splitting of elastic waves  into microlocal s- and p-waves at leading order, for which the statement on propagation of singularities still holds.
We let
\begin{align}
\Sigma_{\rm s}=\{(x,t,\xi,\t)\in T^*(M^+\times \R)\setminus 0:\t^2=c_{\rm s}^2|\xi|_g^2\},\\
	\Sigma_{\rm p}=\{(x,t,\xi,\t)\in T^*(M^+\times \R)\setminus 0:\t^2=c_{\rm p}^2|\xi|_g^2\};
\end{align}
if one uses local coordinates to identify a subset of $M^+$ with $\R^3$, $\Sigma_{\rm s/\rm p}$ can also be viewed as subsets of $T^*(\R^3\times \R)$, as we do in the statement of the following proposition.
Note that $\Sigma_{\rm p}\cap \Sigma_{\rm s}=\emptyset$.

\begin{proposition}[\cite{stefanov2020transmission}]\label{prop:decomp}
	Assume that local coordinates have been used to locally identify $M^+$ with $\R^3$.
	Let $u^+$ be a solution of the elastic wave equation 
	$\p_t^2u^+-\r_{\rm s}^{-1}Eu^+=0$ on an open set in  $\R^3\times  \R$  in the metric setting, and 
	 let $u^{\rm p}$ and $u^{\rm s}$  be microlocalizations of $u^+$ near $\Sigma_{\rm p}$, $\Sigma_{\rm s} $ respectively.
	 With respect to coordinates $(x,\xi,t,\t)\in T^*\R^3\times T^* \R$, in any conical set with $\xi_3\neq 0$ there exist
	 a scalar function $q^{\rm p}$ and a vector valued function $q^{\rm s}=(q^{\rm s}_1,q_2^{\rm s})$ such that microlocally $u^+=u^{\rm s}+u^{\rm p}$, where
	  \begin{equation}
		u^{\rm s}=(-i\curl +V_{\rm s})(q_1^{\rm s},q_2^{\rm s},0)^T ,\quad u^{\rm p}=-i \n  q^{\rm p}+V_{\rm p}(0,0, q^{\rm p})^T,\quad V_{\rm s}, V_{\rm p}\in \Psi^0 (\R^3),
	\end{equation}
and $q^{\rm s}, $ $q^{\rm p}$ satisfy
	 	\begin{align}
		\p_t^2 q^{\rm s}=(c_{\rm s}^2\Delta+A_{\rm s})q^{\rm s}+R_{\rm s}(q^{\rm s},q^{\rm p})^T\label{eq:principally_scalar_1}, \\*
		\p_t^2q^{\rm p}=(c_{\rm p}^2\Delta+A_{\rm p})q^{\rm p}+R_{\rm p}(q^{\rm s},q^{\rm p})^T\label{eq:principally_scalar_2}
	\end{align}
	with matrix valued $A_{\rm s}, A_{\rm p}\in \Psi^1(\R^3)$, $R_{\rm s}, R_{\rm p}\in \Psi^{-\infty}(\R^3)$.
	The $\curl$, gradient $\n$ and Laplace-Beltrami operator $\Delta$ are in the Riemannian sense and $\Delta$ is acting component-wise in \eqref{eq:principally_scalar_1}.
\end{proposition}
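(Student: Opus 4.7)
The strategy is to diagonalize the matrix-valued principal symbol of the elastic operator $\p_t^2 - \r_{\rm s}^{-1}E$ by means of a Helmholtz-type decomposition into a gradient potential (for p-waves) and a two-component curl potential (for s-waves), and then to cancel the resulting off-diagonal lower order terms iteratively with zeroth-order pseudodifferential corrections.

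First, I would compute in local coordinates at a point where $g$ is Euclidean the principal symbol of $\p_t^2 - \r_{\rm s}^{-1}E$ acting on vector fields. Starting from \eqref{elastic_wave_operator}, a direct calculation gives the symmetric matrix
\begin{equation}
\s(x,t,\xi,\t) = (\t^2 - c_{\rm s}^2 |\xi|^2_g) I - (c_{\rm p}^2 - c_{\rm s}^2)\, \xi\, \xi^T.
\end{equation}
Its eigenvalues are $\t^2 - c_{\rm p}^2|\xi|^2_g$ with eigenvector $\xi$ (p-direction) and $\t^2 - c_{\rm s}^2|\xi|^2_g$ with two-dimensional eigenspace $\xi^\perp$ (s-directions). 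Hence the characteristic variety is precisely $\Sigma_{\rm s} \cup \Sigma_{\rm p}$, and $\Sigma_{\rm p} \cap \Sigma_{\rm s} = \emptyset$ since $c_{\rm s} < c_{\rm p}$.

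Next, I would verify the ansatz at the principal level. The map $(q_1^{\rm s}, q_2^{\rm s}, q^{\rm p})^T \mapsto -i\curl(q_1^{\rm s}, q_2^{\rm s}, 0)^T + (-i\n q^{\rm p})$ has principal symbol matrix (in Euclidean coordinates)
\begin{equation}
M(x,\xi) = \begin{pmatrix} 0 & -\xi_3 & \xi_1 \\ \xi_3 & 0 & \xi_2 \\ -\xi_2 & \xi_1 & \xi_3 \end{pmatrix},
\end{equation}
whose determinant equals $\xi_3\, |\xi|^2_g$. Thus on the conic set $\{\xi_3 \neq 0\}$ this map is elliptic, and the gauge choice $q_3^{\rm s}=0$ is admissible because the kernel of $\curl$ (at the principal level) consists of gradients, so one of the three curl components may be set to zero by adjusting $q^{\rm p}$. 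Since $-i\n q^{\rm p}$ lies in the p-eigenspace and $-i\curl(q^{\rm s},0)^T$ in the s-eigenspace of $\s$ at the principal level, applying the elastic operator and using $\div \curl = 0$ and $\curl \n = 0$ (which hold up to lower order in the variable-coefficient setting) yields, modulo $\Psi^1$-coupling, the two uncoupled principal equations $\p_t^2 q^{\rm p} - c_{\rm p}^2 \Delta q^{\rm p} \equiv 0$ and $\p_t^2 q^{\rm s} - c_{\rm s}^2 \Delta q^{\rm s} \equiv 0$, matching the principal parts of \eqref{eq:principally_scalar_1}--\eqref{eq:principally_scalar_2}.

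The main obstacle, and the reason the result is nontrivial, is upgrading this principal-order decoupling to a decoupling modulo smoothing operators. I would do this by an iterative symbolic construction: at step $k \geq 1$, one modifies the zeroth-order corrections $V_{\rm s}, V_{\rm p}$ by adding symbols of order $-k+1$ chosen so as to cancel the off-diagonal entries of the reduced system at order $1-k$. The symbolic equations at each step reduce to division by the characteristic symbols $\t^2 - c_{\rm p}^2 |\xi|_g^2$ and $\t^2 - c_{\rm s}^2|\xi|_g^2$ on the \emph{opposite} block, which is permissible precisely because $\Sigma_{\rm p} \cap \Sigma_{\rm s} = \emptyset$, so each such symbol is elliptic on the characteristic set of the other. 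Asymptotic summation (Borel's lemma) then produces honest $V_{\rm s}, V_{\rm p} \in \Psi^0$ for which the remaining off-diagonal terms are smoothing; these become $R_{\rm s}, R_{\rm p} \in \Psi^{-\infty}$, while the diagonal lower-order contributions are absorbed into $A_{\rm s}, A_{\rm p} \in \Psi^1$. Finally, given $u^+$ microlocalized near $\Sigma_{\rm s} \cup \Sigma_{\rm p}$, the potentials $q^{\rm s}, q^{\rm p}$ are recovered by microlocally inverting the ansatz map, which is elliptic on $\{\xi_3 \neq 0\}$, yielding the desired decomposition.
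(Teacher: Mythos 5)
This proposition is quoted from \cite{stefanov2020transmission} and the present paper supplies no proof of its own, so there is nothing internal to compare against; your outline is, however, essentially the argument used in that reference. The principal symbol computation, the $3\times 3$ symbol matrix $M$ with $\det M = \xi_3|\xi|_g^2$, the observation that $\xi q^{\rm p}$ and the curl-symbol lie in the $\xi$ and $\xi^\perp$ eigenspaces of $\s$ respectively, and the iterated lower-order diagonalization (possible because $\Sigma_{\rm s}\cap\Sigma_{\rm p}=\emptyset$, so each characteristic polynomial is elliptic on the other's zero set) are all the right ingredients, and the decomposition $u^+=u^{\rm s}+u^{\rm p}$ for microlocalized $u^+$ then follows by inverting the elliptic ansatz map on $\{\xi_3\neq 0\}$. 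One small imprecision: the gauge-fixing sentence (``the kernel of $\curl$ consists of gradients, so one of the three curl components may be set to zero by adjusting $q^{\rm p}$'') is not quite right as stated, since $\curl(0,0,q_3)$ is not a gradient; the honest justification for dropping $q_3^{\rm s}$ is simply the ellipticity of the restricted $3\times 3$ map $M$ on $\xi_3\neq 0$, which you do verify via the determinant, so the gap is cosmetic rather than substantive.
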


\noindent Note that the characteristic variety corresponding to \eqref{eq:principally_scalar_1} (resp. \eqref{eq:principally_scalar_2}) is $\Sigma_{\rm s}$ (resp. $\Sigma_{\rm p}$).

\smallskip

We use the same semigeodesic local coordinate setup introduced after Lemma \ref{lm:wf}. 
Moreover, extend smoothly the functions $\r_{\rm s}, $ $\mu_{\rm s}$ and $\l_{\rm s}$ near $p_0$ in order to make sense of a solution $u^+$ of  $\p_t^2u^+-\r_{\rm s}^{-1}Eu^+=0$ in an open set containing $(p_0,t_0)$.
Then by Proposition \ref{prop:decomp}, in a conic neighborhood of $T_{(p_0,t_0)}^*(M\times \R)$ where $\xi_3\neq 0$, we can write microlocally
 $u^+=u^{\rm s}+u^{\rm p}$ and
 \begin{equation}
		u^{\rm s}=U^+  (q_1^{\rm s},q_2^{\rm s},0)^T\quad u^{\rm p}=U^+(0,0,q^{\rm p})^T,\label{eq:decomp}
\end{equation}
where $U^+$ is an elliptic matrix valued pseudodifferential operator of order 1 with respect to the spatial variables.
Its principal symbol at a covector in $ T_{p_0}^*M$ is
\begin{equation}
	\s_{p_0}(U^+)=\begin{pmatrix}
		0 & -\xi_3 	&	\xi_1\\
		\xi_3 	& 	0 	& 	\xi_2\\
		-\xi_2 	& 	\xi_1 	& 	\xi_3 	
	\end{pmatrix}.
\end{equation}

\smallskip

Recall our assumption that the initial data in the solid region has support disjoint from $\p M^+$.
Then away from $\p M^+$ one can locally use geometric optics for principally scalar hyperbolic systems to construct a parametrix for the potentials $q^{\rm s}$, $q^{\rm p}$ with Cauchy data at $t=0$, see \cite{MR618463}, \cite{stefanov2020transmission}, yielding parametrices for the microlocal $s$ and $p$ waves by \eqref{eq:decomp}.
In the discussion below we focus on the transmission problem in a neighborhood of a point at the interface $\G\times \R$.
As before, we describe the construction of a parametrix to the  boundary value problem with Dirichlet data at the interface $\G\times \R$, as  an intermediate step towards constructing an parametrix for the transmission problem.
We also relate Neumann data at $\G\times \R$ to Dirichlet data using the DtN map for the incoming/outgoing parametrices.
The geometric optics construction near the outer boundary $\p M$ with homogeneous Neumann boundary condition can be done using the tools described in this section, and is discussed in detail in \cite[Section 8]{stefanov2020transmission}.

So suppose that we are given  Dirichlet data $f(x',t)\in \calE'(\Gamma \times \R;\R^3)$. %
Similarly to the acoustic case, the parametrix construction for the elastic equation depends on the location of the singularities of $f$.
A covector $(x',\xi',t,\t)\in \WF(f)$ can lie in one of the following regions:
\begin{enumerate}
	\item Hyperbolic: $\{(x',t,\xi',\t)\in T^*(\G\times\R)\setminus 0:\t^2>c_{\rm p}^{2}|\xi'|^2_g\}$,
	\item p-glancing: $\{(x',t,\xi',\t)\in T^*(\G\times\R)\setminus 0:\t^2=c_{\rm p}^{2}|\xi'|^2_g\}$,
	\item Mixed:  $\{(x',t,\xi',\t)\in T^*(\G\times\R)\setminus 0:c_{\rm p}^2|\xi'|^2_g>\t^2>c_{\rm s}^{2}|\xi'|^2_g\}$,
	\item s-glancing: $\{(x',t,\xi',\t)\in T^*(\G\times\R)\setminus 0:\t^2=c_{\rm s}^{2}|\xi'|^2_g\}$,
	\item Elliptic: $\{(x',t,\xi',\t)\in T^*(\G\times\R)\setminus 0:\t^2<c_{\rm s}^{2}|\xi'|^2_g\}$.
\end{enumerate}
We will always assume that our data have wave front set disjoint from the two glancing regions.

First assume that $f$ has wave front set in the component of the \textbf{hyperbolic} region where $\t<0$.
In a neighborhood $\calU$ of $(p_0,t_0)$ in $M$ we will use the geometric optics representation to construct approximate outgoing/incoming solutions to the elastic wave equation, i.e. one satisfying
\begin{equation}\label{eq:system_dir}
\begin{aligned}
	\p_t^2u_{\Out/\In}^+-\r_{\rm s}^{-1} Eu_{\Out/\In}^+=&0\mod C^\infty \text{ on }\calU,\\*
u_{\Out/\In}^+=&f\mod C^\infty \text { on }\calU\cap (\Gamma \times \R),\\
u_{\Out/\In}^+\big|_{t\ll t_0/t\gg t_0}=&0\mod C^\infty \text { on }\calU\cap\o{ M^+}.
\end{aligned}
\end{equation}
In \eqref{eq:system_dir},  outgoing (resp. incoming) means that the solution propagates singularities to the future (resp. past) in the solid region $x_3\geq 0$.
Since $\WF(f)$ is in the hyperbolic region and  $\WF(u^+_{\Out/\In})\subset \Sigma_{\rm p}\cup \Sigma _{\rm s} $, with respect to our coordinates we have $\xi_3\neq 0$ on $ \WF(u^+_{\Out/\In})$ sufficiently near $(p_0,t_0)$.
So as before we can write $u_{\Out/\In}^+=U^+(q_{\Out/\In}^{\rm s},q_{\Out/\In}^{\rm p})^T$.
Since $q^{\rm s}_{\Out/\In}$, $q^{\rm p}_{\Out/\In}$ satisfy \eqref{eq:principally_scalar_1}-\eqref{eq:principally_scalar_2}, it suffices to determine  data $q^{\rm s}_{j,\Out/\In ,b}$, $q^{\rm p}_{\Out/\In ,b}\in \calE'(\Gamma\times \R)$, $j=1,2$ in terms of $f$ at the interface and use them as  Dirichlet data for the geometric optics construction for a principally scalar acoustic system (see \cite[Section 3]{stefanov2020transmission}).
As shown in \cite{stefanov2020transmission}, there exist elliptic matrix pseudodifferential operators $U^+_{\Out/\In}$ on $\G\times \R$ which can be microlocally inverted to produce boundary values $q^{\rm s}_{\Out/\In,b}$, $q^{\rm p}_{\Out/\In,b}$ 
such that
\begin{equation}\label{eq:dir_data}
  f=U_{\Out/\In}^+(q^{\rm s}_{\Out/\In,b}, q^{\rm p}_{\Out/\In,b})^T.
\end{equation}
The subscript $b$ stands for ``boundary''.
The principal symbols of these operators take the form
\begin{equation}\label{eq:symbols_U}
	\s_{(p_0,t_0)}(U_{\In}^+)=\begin{pmatrix}
		0 & \xi_3^{\rm s} 	&	\xi_1\\
		-\xi_3^{\rm s} 	& 	0 	& 	\xi_2\\
		-\xi_2 	& 	\xi_1 	& 	-\xi_3^{\rm p} 	
	\end{pmatrix}, 
	\quad 
	\s_{(p_0,t_0)}(U_{\Out}^+)=\begin{pmatrix}
		0 & -\xi_3^{\rm s} 	&	\xi_1\\
		\xi_3^{\rm s} 	& 	0 	& 	\xi_2\\
		-\xi_2 	& 	\xi_1 	& 	\xi_3^{\rm p} 	
	\end{pmatrix}
\end{equation}
at the fiber of $T^*(\G\times \R)$ over $(p_0,t_0)$, where
\begin{equation}\label{eq:xis}
	\xi_3^{\rm s}=\sqrt{c_{\rm s}^{-2}\t^2-|\xi'|_g^2}, \qquad \xi_3^{\rm p}=\sqrt{c_{\rm p}^{-2}\t^2-|\xi'|_g^2}.
\end{equation} 
Then our solutions corresponding to potentials for $p$-waves will have the form
\begin{equation}\label{eq:go_solid}
	q^{\rm p}_{\Out/\In}=\frac{1}{(2\pi)^3}\int e^{i\varphi^{\rm p}_{\Out/\In}(x',x_3,t,\xi',\t)}a_{\Out/\In}^{\rm p}(x',x_3,t,\xi',\t)\widehat{q}_{\Out/\In,b}^{\rm p}(\xi',\t)d\xi'd\t,
\end{equation}
where the phase function solves an eikonal equation and satisfies
\begin{equation}
\varphi^{\rm p}_{\Out/\In}(x',x_3,t,\xi',\t)=x'\cdot \xi'+t\t\pm x_3\sqrt{c_{p}^{-2}\t^2-|\xi'|^2_g}+O(x_3^2),
\end{equation}
and the amplitude $a_{\Out/\In}^{\rm p}$ is a scalar valued classical symbol which solves a transport equation with $a_{\Out/\In}^{\rm p}(x',0,t,\xi',\t)=1$.
For $q^{\rm p}_{\Out/\In}$ the geometric optics solution is the same as \eqref{eq:go_solid} with all $\rm p$ superscripts replaced by $\rm s$ and with the difference that the amplitude $a_{\Out/\In}^{\rm s}$  has now values in $2\times 2$ matrix valued classical symbols with $a_{\Out/\In}^{\rm s}(x',0,t,\xi',\t)=Id$. 
One subsequently obtains the desired parametrix as $u_{\Out/\In}^+=u^{\rm p}_{\Out/\In}+u^{\rm s}_{\Out/\In}$, using \eqref{eq:decomp}.

From the solution $u_{\Out/\In}^+$ we obtain the traction across the interface $\G$, given by $N(u_{\Out/\In}^+)$.
Define matrix valued operators on $\G\times \R$ by
\begin{equation}\label{eq:dtn_elastic}
	\calM_{\In}^+(q^{\rm s}_{\In,b}, q^{\rm p}_{\In,b})^T=iN(u^+_{\In})\big|_{\G\times \R},\quad \calM_{\Out}^+(q^{\rm s}_{\Out,b}, q^{\rm p}_{\Out,b})^T=iN(u^+_{\Out})\big|_{\G\times \R},
\end{equation}
where $u_{\Out/\In}^+$ solves \eqref{eq:system_dir} with $f$ and $(q^{\rm s}_{\Out/\In,b   }$, $q^{\rm p}_{\Out/\In,b   })$ related by \eqref{eq:dir_data}.
The $i$ factors in \eqref{eq:dtn_elastic} are there to ensure that the principal symbol of $\calM_{\Out/\In}^+$ at $(p_0,t_0)\in\G\times \R$ is a matrix with real entries: it is shown in \cite{stefanov2020transmission} that with our choice of local coordinates
\begin{equation}\label{eq:symbols_M}
	\begin{aligned}
\s_{(p_0,t_0)}(\calM_{\In}^+)=\begin{pmatrix}
	-\mu_{\rm s}\xi_1\xi_2 & \mu_{\rm s} (2\xi_1^2+\xi_2^2)-\r_{\rm s}\t^2 	&	-2\mu_{\rm s}\xi_1\xi_3^{\rm p}\\
	-\mu_{\rm s} (\xi_1^2+2\xi_2^2)+\r_{\rm s}\t^2	&	\mu_{\rm s}\xi_1\xi_2	&	-2\mu\xi_2\xi_3^{\rm p}\\
	2\mu_{\rm s}\xi_2\xi_3^{\rm s} 	& 	-2\mu_{\rm s}\xi_1\xi_3^{\rm s} 	& 	-2\mu_{\rm s} (\xi_1^2+\xi_2^2)+\r_{\rm s}\t^2
\end{pmatrix},\\	
\s_{(p_0,t_0)}(\calM_{\Out}^+)=\begin{pmatrix}
	-\mu_{\rm s}\xi_1\xi_2 & \mu_{\rm s} (2\xi_1^2+\xi_2^2)-\r_{\rm s}\t^2 	&	2\mu_{\rm s}\xi_1\xi_3^{\rm p}\\
	-\mu_{\rm s} (\xi_1^2+2\xi_2^2)+\r_{\rm s}\t^2	&	\mu_{\rm s}\xi_1\xi_2	&	2\mu\xi_2\xi_3^{\rm p}\\
	-2\mu_{\rm s}\xi_2\xi_3^{\rm s} 	& 	2\mu_{\rm s}\xi_1\xi_3^{\rm s} 	& 	-2\mu_{\rm s} (\xi_1^2+\xi_2^2)+\r_{\rm s}\t^2
\end{pmatrix},
	\end{aligned}
\end{equation}
with $\xi_3^{\rm s}$, $\xi_3^{\rm p}$ given by \eqref{eq:xis}.

\smallskip

Assume now that  $f$ has wave front set in the \textbf{mixed} region, with $\t<0$.
Then the $p$-wave is evanescent; we construct  $q_{\Out/\In}^{\rm s}$ as before but now the potential for the $p$ wave will be evanescent, i.e. $q^{\rm p}_{\Ev}$ has complex valued phase function of the form
\begin{equation}
	\varphi_\Ev^{\rm p}(x',x_3,t,\xi',\t)\sim x'\cdot \xi'+t\t + x_3 i \sqrt{|\xi'|^2_g-c_{\rm p}^{-2}\t^2}+\sum_{j=0}^\infty x_3^{2+j}\td{\psi}_j(x',t,\xi',\t),
\end{equation}
where $\td{\psi}_j\in S^1$.
Note that the choice of the first order term in the expansion at $x_3=0$ is chosen so that $\Im \varphi_\Ev^{\rm p}\geq 0$ in ${M}^+$.
Depending on whether the solution with Dirichlet data $f$ has  outgoing or incoming shear waves, we  have $	f=U_{\Ev,\Out/\In}^+(q^{\rm s}_{\Out/\In,b}, q^{\rm p}_{\Ev,b})^T$, where 
\begin{equation}\label{eq:U_ev_out}
	\s_{(p_0,t_0)}(U_{\Ev,\Out}^+)=\begin{pmatrix}
		0 & -\xi_3^{\rm s} 	&	\xi_1\\
		\xi_3^{\rm s} 	& 	0 	& 	\xi_2\\
		-\xi_2 	& 	\xi_1 	& 	\td{\xi}_3^{\rm p} 	
	\end{pmatrix},\quad \td{\xi}_3^{\rm p}=i\sqrt{|\xi'|^2-c_{\rm p}^{-2}\t^2};
\end{equation}
for the principal symbol of $U_{\Ev,\In}^+$ replace  $\xi_3^{\rm s}$ by $-\xi_3^{\rm s}$ in \eqref{eq:U_ev_out} (with  $\td{\xi}_3^{\rm p}$ unchanged).
Moreover, let
\begin{equation}\label{eq:dtn_elastic_mixed}
	\calM_{\Ev,\Out/\In}^+(q^{\rm s}_{\Out/\In}, q^{\rm p}_{\Ev})^T=iN(u_{\Ev,\Out/\In}^+)\big|_{\G\times \R}.
\end{equation}
The principal symbol of $\calM_{\Ev,\Out}^+$ at a covector in $T^*_{(p_0,t_0)}(\G\times \R)$ is given by
\begin{equation}\label{eq:M_ev_out}
	\s_{(p_0,t_0)}(\calM_{\Ev,\Out}^+)=\begin{pmatrix}
	-\mu_{\rm s}\xi_1\xi_2 & \mu_{\rm s} (2\xi_1^2+\xi_2^2)-\r_{\rm s}\t^2 	&	2\mu_{\rm s}\xi_1\td{\xi}_3^{\rm p}\\
	-\mu_{\rm s} (\xi_1^2+2\xi_2^2)+\r_{\rm s}\t^2	&	\mu_{\rm s}\xi_1\xi_2	&	2\mu\xi_2\td{\xi}_3^{\rm p}\\
	-2\mu_{\rm s}\xi_2\xi_3^{\rm s} 	& 	2\mu_{\rm s}\xi_1\xi_3^{\rm s} 	& 	-2\mu_{\rm s} (\xi_1^2+\xi_2^2)+\r_{\rm s}\t^2
\end{pmatrix}.
\end{equation}
To obtain  $\s_{(p_0,t_0)}(\calM_{\Ev,\In}^+)$ again  one only needs to replace $\xi_3^{\rm s}$ by $-\xi_3^{\rm s}$ in \eqref{eq:M_ev_out}.

\smallskip

Finally suppose that $f\in \calE'(\G\times \R;\R^3)$ has wave front set in the \textbf{elliptic} region, 
 and with $\t<0$. 
Then we only have evanescent potentials $(q^{\rm s}_{\Ev}, q^{\rm p}_{\Ev})$ written as described in \eqref{eq:go_solid} and the subsequent discussion, with complex valued phase functions $\varphi_{\Ev}^{\rm p/s}$ having asymptotic expansions at $x_3=0$
\begin{equation}
	\varphi^{\rm p/s}_\Ev(x',x_3,t,\xi',\t)=x'\cdot \xi'+t\t+ix_3\sqrt{|\xi'|^2_g-c_{\rm p/s}^{-2}\t^2}+O(x_3^2).
\end{equation}
We can still produce a solution in the form $u_{\Ev}^+=u_{\Ev}^{\rm s}+u_{\Ev}^{\rm p}$, as before, with $u_{\Ev}^{\rm s}=U^+ (q^{\rm s}_{\Ev},0)$, $u_{\Ev}^{\rm p}=U^+(0,q^{\rm p}_{\Ev})$. 
We have an operator $U^+_\Ev$ on $\Gamma\times \R$ analogous to $U^+_{\Out/\In}$, with principal symbol 
\begin{equation}
	\s_{p_0}(U_\Ev^+)=\begin{pmatrix}
		0 & -\td{\xi}_3^{\rm s} 	&	\xi_1\\
		\td{\xi}_3^{\rm s} 	& 	0 	& 	\xi_2\\
		-\xi_2 	& 	\xi_1 	& 	\td{\xi}_3^{\rm p} 	
	\end{pmatrix},\quad \td{\xi}_3^{\rm s}=i\sqrt{|\xi'|^2-c_{\rm s}^{-2}\t^2}.
\end{equation}
This operator has the property $f=U_\Ev^+(q_{\Ev,b}^{\rm s},q_{\Ev,b}^{\rm p})$.
Moreover, writing 
\begin{equation}
	\calM_\Ev^+(q_{\Ev,b}^{\rm s},q_{\Ev,b}^{\rm p})^T=iN(u^+)\big|_{\G\times \R},
\end{equation}
one has
\begin{equation}\label{eq:ell_M}
	\s_{p_0}(\calM_\Ev^+)=\begin{pmatrix}
	-\mu_{\rm s}\xi_1\xi_2 & \mu_{\rm s} (2\xi_1^2+\xi_2^2)-\r_{\rm s}\t^2 	&	2\mu_{\rm s}\xi_1\td{\xi}_3^{\rm p}\\
	-\mu_{\rm s} (\xi_1^2+2\xi_2^2)+\r_{\rm s}\t^2	&	\mu_{\rm s}\xi_1\xi_2	&	2\mu\xi_2\td{\xi}_3^{\rm p}\\
	-2\mu_{\rm s}\xi_2\td{\xi}_3^{\rm s} 	& 	2\mu_{\rm s}\xi_1\td{\xi}_3^{\rm s} 	& 	-2\mu_{\rm s} (\xi_1^2+\xi_2^2)+\r_{\rm s}\t^2
\end{pmatrix}.
\end{equation}

\section{Microlocal well-posedness of the transmission problem}
\label{sec:well_posedness_microlocal}

In this section we study microlocally the transmission problem at the interface between a solid and fluid.
Given waves on the two sides of the interface in a neighborhood of a point $p_0\in \G$ and for time near a fixed $t_0$, their Dirichlet and Neumann data at $\G\times \R$ must match according to the transmission conditions; a covector in the wave front set of those data can lie in one of 15 possible regions, depending on whether it is in the hyperbolic/p-glancing/mixing/s-glancing/elliptic region for the solid and the hyperbolic/glancing/elliptic region for the fluid.
For instance, given an incoming microlocal p-wave $u_{\In}^{\rm p}$  in the solid,
the wave front set of its restriction to $\G\times \R$ will lie in the hyperbolic or glancing region for p-waves and in the hyperbolic one for  s-waves.
With respect to the acoustic speed in the fluid it can be in any of the elliptic, glancing or hyperbolic region, depending on the value of the acoustic speed in the fluid at the point of interest.
In this section we consider all possible cases for the location of wave front set of the boundary values of the various incoming, outgoing and evanescent waves, except the cases when the wave front set is contained in any of the glancing regions.

To construct the reflected and transmitted  waves generated by the arrival at $\G $ of  various combinations of incident p- or s-waves in the solid, or acoustic waves in the fluid, it suffices to determine Dirichlet data at $\G\times\R$ for their potentials, discussed in Sections \ref{sec:acoustic}-\ref{sec:elastic}.
For this purpose we use the transmission conditions at $\G$ and\ the microlocal DtN maps introduced in Sections \ref{sec:acoustic} and \ref{sec:elastic} and  set up systems for the principal amplitudes of the interface values of outgoing potentials; we then show that those systems can be solved in terms of the principal amplitudes of the incoming ones by proving ellipticity. Then they can be solved to any order as well. 
We also investigate the question of control, namely whether every configuration in the solid (resp. fluid) side can be produced by choosing appropriate waves in the fluid (respectively, solid) side. This is needed for the inverse problem.

Throughout this section we will work near a point $(p_0,t_0)\in \G\times \R$, with semigeodesic local coordinates chosen as described in the paragraph following Lemma \ref{lm:wf}. 
Our full local coordinate system $(x',x_3,t)$ induces local coordinates $(x',x_3,t,\xi',\xi_3,\t)$ on $T^*(M\times \R)\cong T^*M\times T^* \R$, and $(x',t,\xi',\t)$ are coordinates on $T^*(\G\times \R)$. 

\subsection{The hyperbolic-hyperbolic case (Figure~\ref{fig:hyp_hyp})}\label{ssec:hh}

Suppose that we have incoming body waves in the solid and in the fluid.
We may assume that the elastic wave in the solid side and the  potential in the fluid side solve \eqref{pot_sys_1} and \eqref{pot_sys_2} respectively in a neighborhood of a point $(p_0,t_0)\in \G\times \R$ in $M\times \R$, by extending $\mu_{\rm s}$, $\l_{\rm s}$, $\r_{\rm s}$, $\l_{\rm f}$ and $\r_{\rm f}$ smoothly near $p_0$ from their respective initial domains of definition.
We first consider the case where the wave front sets of the traces of all waves are contained in the hyperbolic region for all three speeds $c_{\rm s}$, $c_{\rm p}$, $c_{\rm f}$, with $\t<0$ (the case $\t>0$ is similar);
using a microlocal partition of unity if necessary, it suffices to assume that they
 are contained in a small conical neighborhood $\td{\Sigma}$ of a covector 
 $\big((p_0,t_0),(\xi_0',\t_0)\big)\in T^*(\G\times \R)\setminus 0$.
We recall from Section \ref{sec:elastic} that the boundary trace of the incoming wave $u^+_{\In}$ can be written as $u^+_{\In,b}=U_{\In}^+q_{\In,b}^+$, where the principal symbol of $U_{\In}^+$ at $(p_0,t_0)$ is  as in \eqref{eq:symbols_U} and $q_{\In,b}^+=(q^{\rm s}_{\In,b},q^{\rm p}_{\In,b}) ^T$. 
Moreover, the boundary value of the traction at the interface is given by $N(u^+_{\In})\big|_{\G\times \R}=-i \calM_{\In}^+q_{\In,b}$, where the principal symbol of $\calM_{\In}^+$ is given by \eqref{eq:symbols_M}.
The discussion regarding outgoing waves in the solid region is similar, except the subscripts are now replaced by ``$\Out$''.
On the fluid side, the normal derivative of $\psi^-_{\In}$ is $\p_\nu\psi_{\In}^-\big|_{\G\times \R}=\Lambda_{\In}^-\psi_{\In,b}^-$, and similarly for $\psi^-_{\Out}$.
Hence by the transmission conditions  \eqref{pot_tr_1}-\eqref{pot_tr_2}
\begin{subequations}
\begin{align}
{\nu} \cdot \p_t(U_{\In}^+ q_{\In,b}^++U_{\Out}^+ q_{\Out,b}^+)=&-\r_{\rm f}^{-1}\Lambda_{\In}^-\psi^-_{\In,b}-\r_{\rm f}^{-1}\Lambda_{\Out}^-\psi^-_{\Out,b}	\label{system_first}	\\*
-i(\calM_{\In}^+ q_{\In,b}^++\calM_{\Out}^+ q_{\Out,b}^+)=&-\p_t \psi^-_{\In,b}{\nu}-\p_t \psi^-_{\Out,b}\; {\nu}\label{system_second}.
\end{align}
\end{subequations}

Rewrite \eqref{system_first}-\eqref{system_second} as a system for the traces of the outgoing solutions:
\begin{equation}\label{rewritten_system}
	\qquad A_{\Out}^{\rm{hh}}\begin{pmatrix}
		q_{\Out,b}^+\\
		\psi_{\Out,b}^-
	\end{pmatrix}=
	  A_{\In}^{\rm{hh}}
	  \begin{pmatrix}
	  	 q_{\In,b}^+\\
	  	 \psi_{\In,b}^-
	  \end{pmatrix},\\
\end{equation}
where
\begin{equation}
A_{\Out}^{\rm{hh}} :=  \begin{pmatrix}
	 \p_t ({\nu} \cdot U_{\Out}^+) & 	\r_{\rm f}^{-1}\Lambda_{\Out}^-\\
	 -i \calM_{\Out}^+				& 	 {\nu}\; \p_t  
	 \end{pmatrix}, \quad 
	 A_{\In}^{\rm{hh}}:=\begin{pmatrix}
	  	-\p_t ( {\nu} \cdot U_{\In}^+) &  -\r_{\rm f}^{-1}\Lambda_{\In}^-\\
		i \calM_{\In}^+ 					& -\nu \; \p_t
	  \end{pmatrix},
\end{equation}
and the superscripts stand for hyperbolic-hyperbolic. 
We would like to show that the system \eqref{rewritten_system} is solvable microlocally, i.e. that the matrix operator $A_{\Out}^{\rm{hh}}$ on the left hand side is elliptic.
Since the matrix operators in the first column of $A_{\Out}^{\rm{hh}}$ are of order 2, whereas the ones on the right column are of order 1,
the homogeneous principal symbol of degree 2 of the operator is not invertible.
However we can seek ellipticity in the Douglis-Nirenberg sense (\cite{MR75417}), which in this case amounts to computing the matrix whose entries are the principal symbols of the individual operators appearing as entries in \eqref{rewritten_system}, and checking that its  determinant  is non-zero for $(\xi',\t)\neq 0$ in the hyperbolic-hyperbolic region.

By  \eqref{eq:symbols_U} and the fact that $\nu=-\p_{x_3}$ in terms of our local coordinates, we have
\begin{equation} 
	\s_{p_0}(\nu \cdot U_{\Out}^+)=\begin{pmatrix}
		\xi_2 & -\xi_1 &-\xi_3^{\rm p}
	\end{pmatrix},\quad 
	\s_{p_0}(\nu \cdot U_{\In}^+)=\begin{pmatrix}
		\xi_2 & -\xi_1 &\xi_3^{\rm p}
	\end{pmatrix}.
\end{equation}
By the  invariance of the principal symbols of $U^+_{\In/\Out}$, $\calM^+_{\In/\Out}$ and $\Lambda_{\In/\Out}^-$  under rotations in the $\xi_1$-$\xi_2$ plane observed in \cite[Section 7.2]{stefanov2020transmission} (this uses the specific choice of local coordinates made so that $g$ is Euclidean at $p_0$),
  the problem of showing the requisite ellipticity at $(\xi',\t)\in T^*_{(p_0,t_0)}(\G\times\R)$ reduces to showing it under the assumption $\xi_2=0$.
 Compute the principal symbols $\td{\s}_{p_0}(A_{\Out/\In}^{\rm{hh}})$ of $A_{\Out/\In}^{\rm{hh}}$, in the Douglis-Nirenberg sense described before, with $\xi_2=0$:
\begin{subequations}
\begin{align}\label{A_out}
	\td{\s}_{(p_0,t_0)}(A_{\Out}^{\rm{hh}})=\begin{pmatrix}
		0 	& 	-i\t \xi_1 	& 	-i\t \xi_3^{\rm p} & 	i\r_{\rm f}^{-1}\xi_3^{\rm f}\\
		0 	& 	-i(2\mu_{\rm s}\xi_1^2-\r_{\rm s}\t^2) 		& 	-2i\mu_{\rm s}\xi_1\xi_3^{\rm p} & 0 \\
		i(\mu_{\rm s}\xi_1^2-\r_{\rm s}\t^2) 	& 		0 & 0 & 0 \\
		0 & -2i\mu_{\rm s}\xi_1\xi_3^{\rm s} & 	i(2\mu_{\rm s}\xi_1^2-\r_{\rm s}\t^2) & -i\t
	\end{pmatrix},\\ %
	 \quad \td{\s}_{(p_0,t_0)}(A_{\In}^{\rm{hh}})
	 = \begin{pmatrix}
		0 	& 	i\t \xi_1 	& 	-i\t \xi_3^{\rm p} & 	i\r_{\rm f}^{-1}\xi_3^{\rm f}\\ 
		0 	& 	i(2\mu_{\rm s}\xi_1^2-\r_{\rm s}\t^2) 		& 	-2i\mu_{\rm s}\xi_1\xi_3^{\rm p} & 0 \\
		-i(\mu_{\rm s}\xi_1^2-\r_{\rm s}\t^2) 	& 		0 & 0 & 0 \\
		0 & -2i\mu_{\rm s}\xi_1\xi_3^{\rm s} & 	-i(2\mu_{\rm s}\xi_1^2-\r_{\rm s}\t^2) & i\t
	\end{pmatrix},\label{eq:A_in}%
\end{align}
\end{subequations}
where $\xi_3^\bullet =\sqrt{c_\bullet^{-2}\t^2-|\xi'|^2_g}$ for $\bullet =p,s, f$,  evaluated at $\xi'=(\xi_1,0)$.

Using \eqref{A_out}-\eqref{eq:A_in}, we can rewrite \eqref{system_first}-\eqref{system_second} at the principal symbol level as a system for the boundary values of the amplitudes of $q_{\Out}^+$, $\psi_{\In}^-$.
We write (with $\calF$ the Fourier transform)
\begin{subequations}
\begin{align}
\big(b^{\rm s}_{1,\Out/\In}(\xi',\t),b^{\rm s}_{2,\Out/\In}(\xi',\t),b^{\rm p}_{\Out/\In}(\xi',\t)\big)=\calF_{(x',t)}({q}^+_{\Out/\In,b})(\xi',\t), \label{amplitude1}\\*
 	b_{\Out/\In}^{\rm f}(\xi',\t)=\calF_{(x',t)}({\psi}^-_{\Out/\In,b})(\xi',\t ), \label{amplitude2}
\end{align}
\end{subequations}
 and we seek to determine $\big(b^{\rm s}_{1,\Out},b^{\rm s}_{2,\Out},b^{\rm p}_{\Out}\big)$ and $b_{\Out}^{\rm f}$
 given $\big(b^{\rm s}_{1,\In},b^{\rm s}_{2,\In},b^{\rm p}_{\In}\big)$ and $b_{\In}^{\rm f}$.
 Once this has been done, can construct parametrices for $q_{\Out}^+$, $\psi_{\Out}^-$ using the geometric optics ansatz.

We remark here that in the case where the direction of propagation of the wave $u^+_{\Out/\In}$ is given by $(\xi_1,0,\xi_3)$ (i.e. $\xi_2=0$) and the metric is taken to be Euclidean at $p_0$, the amplitudes $b_{1,\In/\Out}^{\rm s}$ correspond to  microlocal shear horizontal (SH) waves at $\G$, in the sense that the corresponding wave $u^{\rm sh}_{\Out/\In}=-i \curl (q_{1,\Out/\In}^{\rm s},0,0)$ is tangent to the interface $\G$ at $p_0$, up to lower order terms.
On the other hand, the amplitudes $b_{2,\In/\Out}^{\rm s}$ correspond to microlocal shear vertical (SV) waves at $\G$ in the sense that the corresponding wave $u^{\rm sv}_{\Out/\In}=-i \curl (0,q_{2,\Out/\In}^{\rm s},0)$ satisfies $(\curl u^{\rm sv}_{\Out/\In})\cdot \nu=0 $ at $\G$.
In our case where the Lam\'e parameters are non-constant, the decomposition into shear horizontal and shear vertical waves only makes sense at $\G$; for details see \cite[Section 7.2]{stefanov2020transmission}.

It now follows from \eqref{A_out}-\eqref{eq:A_in} that the system 
for the outgoing amplitudes at the principal symbol level decouples into the following two systems:
\begin{subequations}
\begin{align}
	 \begin{split}
			&\begin{pmatrix}
				\t\xi_1 					& 	\t\xi_3^{\rm p} 					& -\r_{\rm f}^{-1}\xi_3^{\rm f}\,\\
				2\mu_{\rm s} \xi_1^2-\r_{\rm s}\t^2 	&	2\mu_{\rm s}\xi_1\xi_3^{\rm p}			& 0\\
				2\mu_{\rm s}\xi_1\xi_3^{\rm s} 			& 	-2\mu_{\rm s} \xi_1^2+\r_{\rm s}\t^2	& \t 
			\end{pmatrix}
			\begin{pmatrix}
				b_{2,\Out}^{\rm s}\\ b_{\Out}^{\rm p}\\ {b}^{\rm f}_{\Out}
			\end{pmatrix} \\
			&\qquad =\begin{pmatrix}
				-\t\xi_1 	& 	\t\xi_3^{\rm p} 				&	-\r_{\rm f}^{-1}\xi_3^{\rm f}\, \\
				-2\mu_{\rm s} \xi_1^2+\r_{\rm s}\t^2 	&	2\mu_{\rm s}\xi_1\xi_3^{\rm p}			&	0 \\
				2\mu_{\rm s}\xi_1\xi_3^{\rm s} 	& 	2\mu_{\rm s} \xi_1^2-\r_{\rm s}\t^2			&	{}-\t 
			\end{pmatrix}
			\begin{pmatrix}
				b_{2,\In}^{\rm s}\\ b_{\In}^{\rm p}\\	{b}^{\rm f}_{\In}
			\end{pmatrix},
		\end{split}\label{matrix_system}\\* %
\intertext{ and }\\*
	&(-\mu_{\rm s} \xi_1^2+\r_{\rm s}\t^2)(b_{1,\In}^{\rm s}+b_{1,\Out}^{\rm s})=0.\label{full_reflection}
	\end{align}
\end{subequations}

The determinant of the $3\times 3$ matrix on the left hand side of \eqref{matrix_system} is given by 
\begin{equation}
	 \left(\t^4\r_{\rm s}\xi_3^{\rm p}+\r_{\rm f}^{-1}\xi_3^{\rm f}\left((2\mu_{\rm s}\xi_1^2-\r_{\rm s}\t^2)^2+4\mu_{\rm s}^2\xi_1^2\xi_3^{\rm s}\xi_3^{\rm p}\right)\right)\neq0 \label{determinant}
\end{equation}
for $(\xi',\t)=(\xi_1,0,\t)\neq 0$.
Thus \eqref{matrix_system}-\eqref{full_reflection} is solvable for the outgoing amplitudes.
Moreover, it follows  from \eqref{full_reflection} that  the microlocal shear horizontal waves are totally reflected. 
Notice that this total internal reflection of the microlocal SH waves takes place without creation of evanescent waves on the fluid side (unlike the case of total internal reflection of acoustic waves meeting an interface between two fluids in the hyperbolic-elliptic region, see e.g \cite[\sectionsymbol 3.3.2]{stefanov2020transmission}, where evanescent waves are created on the other side).
This can be explained by the transmission condition: the kinematic transmission condition \eqref{pot_tr_1} imposes no restriction on the SH waves at the interface (at the principal symbol level), since they are tangent to it.
Moreover, the dynamic transmission condition \eqref{pot_tr_2} forces the tangential components of the traction at $\G$ to vanish, which, as one can check, implies $\calF\big(N(u_{\In}^{\rm sh}+u_{\Out}^{\rm sh})\big)(\xi_1,0,\t)=0$ modulo lower order terms for $u^{\rm sh}_{\Out/\In}=-i \curl (q_{1,\Out/\In}^{\rm s},0,0)$, which is equivalent to \eqref{full_reflection}.
In other words, at the leading order the interface behaves like a ``hard boundary'' with respect to the SH waves, i.e. like an interface between the solid and vacuum; with that observation, the full reflection of the SH waves without transmission of singularities to the fluid side is to be expected, as shown e.g. in \cite[\sectionsymbol 8]{stefanov2020transmission}. 

\begin{figure}[h]
	\includegraphics[page = 1, scale=1]{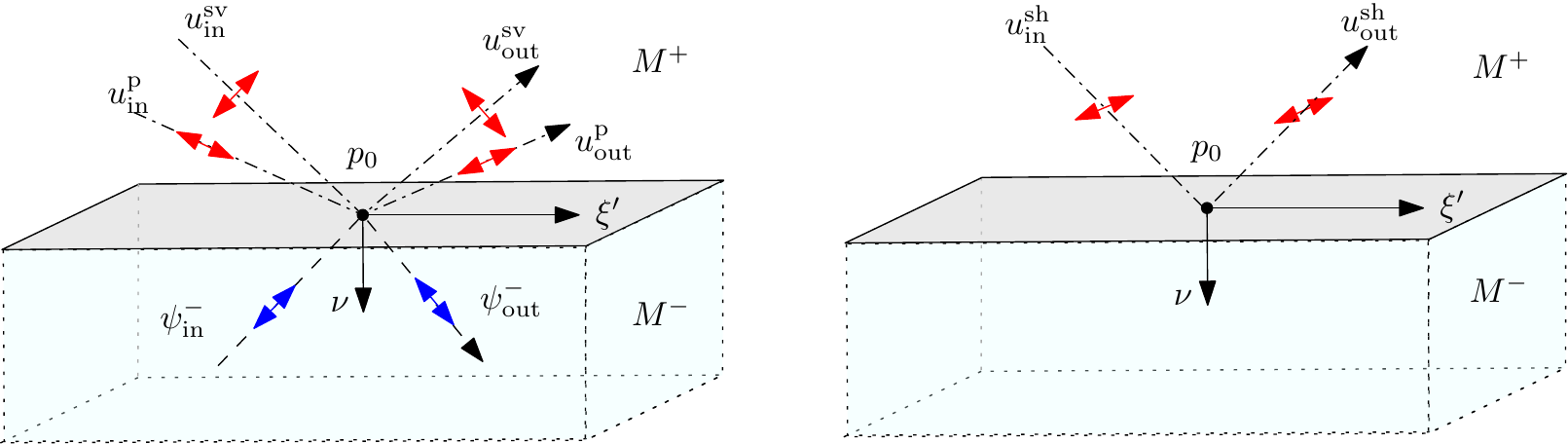}
	\caption{The hyperbolic-hyperbolic transmission system. The solid occupies the top region $M^+$, whereas the fluid occupies the bottom one, $M^-$. 
    On the left hand side we see the $p$ waves and the microlocal shear vertical (SV) waves, as well as the acoustic waves in the fluid. The microlocal shear horizontal (SH) waves are totally reflected and are pictured separately, on the right. }
	\label{fig:hyp_hyp}
\end{figure}

Given a solution of the acoustic equation on the fluid side whose Cauchy data at $\G\times \R$ has wave front set in the hyperbolic-hyperbolic region, one can choose suitable waves on the solid side to produce them:
finding appropriate amplitudes at the boundary for the incoming and outgoing waves in the solid reduces to solvability of the system
\begin{equation}
	\begin{aligned}\label{eq:control_hh}
	\begin{pmatrix}
		 	\t \xi_1 	& 	\t \xi_3^{\rm p} & 		\t \xi_1 	& 	-\t \xi_3^{\rm p}\\
		 	2\mu_{\rm s}\xi_1^2-\r_{\rm s}\t^2 		& 	2\mu_{\rm s}\xi_1\xi_3^{\rm p} & 2\mu_{\rm s}\xi_1^2-\r_{\rm s}\t^2 		& 	-2\mu_{\rm s}\xi_1\xi_3^{\rm p}  
		\\
		 2\mu_{\rm s}\xi_1\xi_3^{\rm s} & 	-2\mu_{\rm s}\xi_1^2+\r_{\rm s}\t^2 & -2\mu_{\rm s}\xi_1\xi_3^{\rm s} & 	-2\mu_{\rm s}\xi_1^2+\r_{\rm s}\t^2
	\end{pmatrix}
	\begin{pmatrix}
		b^{\rm s}_{2,\Out} \\
		b^{\rm p}_{\Out} \\
		b^{\rm s}_{2,\In} \\
		b^{\rm p}_{\In}
	\end{pmatrix}\qquad\\
	=\begin{pmatrix}
		-\r_{\rm f}^{-1}\xi_3^{\rm f} &  \r_{\rm f}^{-1}\xi_3^{\rm f}\\
		0 & 0\\ 
		 -\t & 			-\t
	\end{pmatrix}\begin{pmatrix}
		b_{\In}^{\rm f}\\
		b_{\Out}^{\rm f}
	\end{pmatrix},
\end{aligned}
\end{equation}
which is  underdetermined as a system for $(b^{\rm s}_{2,\Out},
		b^{\rm p}_{\Out},
		b^{\rm s}_{2,\In},
		b^{\rm p}_{\In})^T$.
This can be seen by row reduction (recall that $\t\neq 0$ in the hyperbolic-hyperbolic region).

On the other hand, we generally cannot control the solid side from the fluid one. 
Eq. \eqref{full_reflection} implies that microlocal shear horizontal waves in the solid side are structured and independent of the waves in the fluid one.
Arbitrary shear vertical and pressure waves in the solid side also cannot be created by an appropriate choice of waves in the fluid side: to do so we would have to solve \eqref{eq:control_hh} for $ (b_{\In}^{\rm f},		b_{\Out}^{\rm f})$, and this system is overdetermined; for instance it is solvable when
\begin{equation}
	\begin{pmatrix}
		2\mu_{\rm s}\xi_1^2-\r_{\rm s}\t^2 		& 	2\mu_{\rm s}\xi_1\xi_3^{\rm p} & 2\mu_{\rm s}\xi_1^2-\r_{\rm s}\t^2 		& 	-2\mu_{\rm s}\xi_1\xi_3^{\rm p}  
	\end{pmatrix}
	\begin{pmatrix}
		b^{\rm s}_{2,\Out}&
		b^{\rm p}_{\Out}&
		b^{\rm s}_{2,\In}&
		b^{\rm p}_{\In}
	\end{pmatrix}^T=0
\end{equation}
in the small conical neighborhood $\td{\Sigma}$ of interest containing $\WF(u^+)$.

\subsection{The mixed-hyperbolic region (Figure~\ref{fig:mix-hyp})} \label{ssec:mh}

This case can happen only  if $c_{\rm f}(p_0)<c_{\rm p}(p_0)$.
We have incoming $\rm s$-waves in the solid and acoustic waves in the fluid, but no $\rm p$-waves in the solid; we seek the latter as evanescent waves.
The transmission conditions \eqref{pot_tr_1}, \eqref{pot_tr_2} yield 
\begin{align}
{\nu} \cdot \p_t(U_{\Ev,\In}^+ q_{\Ev,\In,b}^++U_{\Ev,\Out}^+ q_{\Ev,\Out,b}^+)=&-\r_{\rm f}^{-1}\Lambda_{\In}^-\psi_{\In,b}^--\r_{\rm f}^{-1}\Lambda_{\Out}^-\psi^-_{\Out,b}	\label{mix_hyp_system_first},	\\*
-i(\calM_{\Ev,\In}^+ q_{\Ev,\In,b}^++\calM_{\Ev,\Out}^+ q_{\Ev,\Out,b}^+)=&-\p_t \psi^-_{\In,b}{\nu}-\p_t \psi^-_{\Out,b}\; {\nu}\label{mix_hyp_system_second}.
\end{align}
As in \eqref{amplitude1}-\eqref{amplitude2}, let $(b^{\rm s}_{1,\Out/\In},b^{\rm s}_{2,\Out/\In},b^{\rm p}_{\Ev})=\calF\big(q_{\Ev,\Out/\In,b}^+\big)$.
We also let  $b_{\Out/\In}^{\rm f}=\calF\big(\psi^-_{\Out/\In,b}\big)$.
Again we wish to solve a system of the form \eqref{rewritten_system}, where now $A_{\Out/\In}^{\rm{hh}}$ are replaced by 
\begin{equation}
A_{\Out} ^{\rm{mh}}:=  \begin{pmatrix}
	 \p_t ({\nu} \cdot U_{\Ev,\Out}^+) & 	\r_{\rm f}^{-1}\Lambda_{\Out}^-\\
	 -i \calM_{\Ev,\Out}^+				& 	 {\nu}\; \p_t  
	 \end{pmatrix}, \quad 
	 A_{\In}^{\rm{mh}}:=\begin{pmatrix}
	  	-\p_t ( {\nu} \cdot U_{\Ev,\In}^+) &  -\r_{\rm f}^{-1}\Lambda_{\In}^-\\
		i \calM_{\Ev,\In}^+ 					& -\nu \; \p_t
	  \end{pmatrix}.
\end{equation}
The principal symbol of the $A_{\In/\Out} ^{\rm{mh}}$ will  agree with the one of $A_{\In/\Out} ^{\rm{hh}}$ with the difference that occurrences of $\pm \xi_3^{\rm p}$ in the principal symbols of $U^+_{\Out/\In}$ and $\calM^+_{\Out/\In}$ will now be replaced by $\td{\xi}_3^{\rm p}=i\sqrt{|\xi'|_g^2-c_{\rm p}^{-2}\t^2}$.
Moreover, there is no pair of $b_{\In/\Out}^{\rm p}$ but only one $b_{\Ev}^{\rm p}$ in the system we set up.
Hence, with $\xi_2=0$ as before we reach the decoupled system
\begin{subequations}
\begin{align}
	 \begin{split}
			&\begin{pmatrix}
				\t\xi_1 					& 	2\t\td{\xi}_3^{\rm p} 					& -\r_{\rm f}^{-1}\xi_3^{\rm f}\,\\
				2\mu_{\rm s} \xi_1^2-\r_{\rm s}	\t^2 	&	2\mu_{\rm s}\xi_1\td{\xi}_3^{\rm p}			& 0\\
				2\mu_{\rm s}\xi_1\xi_3^{\rm s} 			& 	-4\mu_{\rm s} \xi_1^2+2\r_{\rm s}\t^2	& \t 
			\end{pmatrix}
			\begin{pmatrix}
				b_{2,\Out}^{\rm s}\\ b_{\Ev}^{\rm p}\\ {b}^{\rm f}_{\Out}
			\end{pmatrix} \\
			&\qquad=\begin{pmatrix}
				-\t\xi_1 		&	-\r_{\rm f}^{-1}\xi_3^{\rm f}\, \\
				-2\mu_{\rm s} \xi_1^2+\r_{\rm s}\t^2 			&	0 \\
				2\mu_{\rm s}\xi_1\xi_3^{\rm s} 			&	{}-\t 
			\end{pmatrix}
			\begin{pmatrix}
				b_{2,\In}^{\rm s}\\ {b}^{\rm f}_{\In}
			\end{pmatrix},
		\end{split}\label{matrix_system_mh}\\ %
\intertext{ and }
	&(-\mu_{\rm s} \xi_1^2+\r_{\rm s}\t^2)(b_{1,\In}^{\rm s}+b_{1,\Out}^{\rm s})=0.\label{full_reflection_mh}\noeqref{full_reflection_mh}
	\end{align}
\end{subequations}

The determinant of the $3\times 3$ matrix on the left hand side of \eqref{matrix_system_mh} is given by 
\begin{equation}
	2\big(\t^4\r_{\rm s}\td{\xi}_3^{\rm p}+\r_{\rm f}^{-1}\xi_3^{\rm f}\big((2\mu_{\rm s}\xi_1^2-\r_{\rm s}\t^2)^2+4\mu_{\rm s}^2\xi_1^2\xi_3^{\rm s}\td{\xi}_3^{\rm p}\big)\big), \label{mix_hyp_determinant}%
\end{equation}
with real part $2\r_{\rm f}^{-1}\xi_3^{\rm f}(2\mu_{\rm s}\xi_1^2-\r_{\rm s}\t^2)^2$.
When the real part vanishes, that is, when $\r_{\rm s}\t^2=2\mu_{\rm s}\xi_1^2$, the imaginary part of \eqref{mix_hyp_determinant} becomes
$
	-4i\mu_{\rm s}\td{\xi}_3^{\rm p}\xi_1^2(\t^2+2\mu_{\rm s}\r_{\rm f}^{-1}\xi_3^{\rm f}\xi_3^{\rm s})>0,%
$
thus the system \eqref{matrix_system_mh} can be solved for $(
b_{2,\Out}^{\rm s}, b^{\rm p}_{\Ev}, b^{\rm f}_{\Out})^T$. In addition, by \eqref{full_reflection_mh}, the microlocal shear horizontal waves experience full internal reflection.

\begin{figure}[h]
	\includegraphics[page=2]{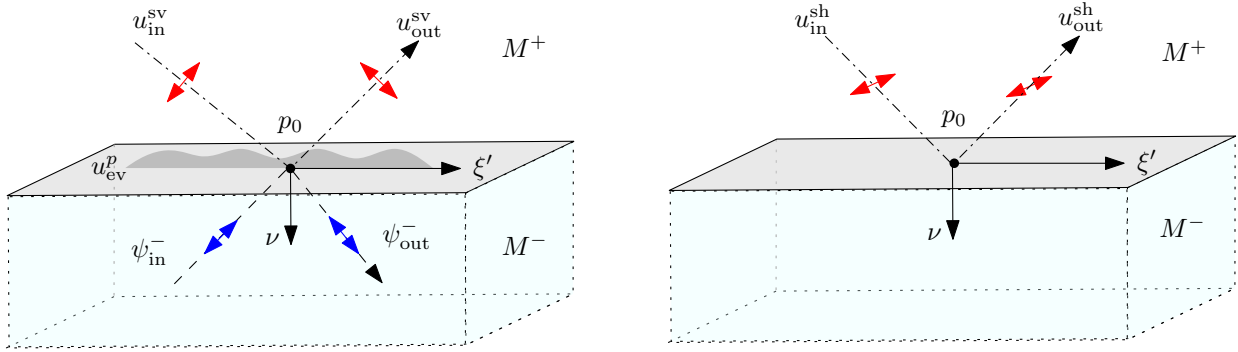}
	\caption{The mixed-hyperbolic transmission system. On the left we see microlocal SV waves in the solid and acoustic waves in the fluid, with creation of an evanescent $p$ wave. As pictured on the right, the microlocal SH waves are totally reflected.}
	\label{fig:mix-hyp}
\end{figure}

In order to produce an arbitrary acoustic wave  in the fluid side whose Cauchy data at $\G\times \R$ has wave front set in the mixed-hyperbolic region, using appropriate $s$ waves in the solid and with a possible creation of evanescent $p$ waves,  we have to solve the system
\begin{equation}\label{eq:mh_control}
	\begin{aligned}
	\begin{pmatrix}
		 	\t \xi_1 	& 	2\t \td{\xi}_3^{\rm p} & 		\t \xi_1 	\\
		 	2\mu_{\rm s}\xi_1^2-\r_{\rm s}\t^2 		& 	4\mu_{\rm s}\xi_1\td{\xi}_3^{\rm p} & 2\mu_{\rm s}\xi_1^2-\r_{\rm s}\t^2 		
		\\
		 2\mu_{\rm s}\xi_1\xi_3^{\rm s} & 	-4\mu_{\rm s}\xi_1^2+2\r_{\rm s}\t^2 & -2\mu_{\rm s}\xi_1\xi_3^{\rm s} 
	\end{pmatrix}
	\begin{pmatrix}
		b^{\rm s}_{2,\Out} \\
		b^{\rm p}_{\Ev} \\
		b^{\rm s}_{2,\In} \\
	\end{pmatrix}=\begin{pmatrix}
		-\r_{\rm f}^{-1}\xi_3^{\rm f} &  \r_{\rm f}^{-1}\xi_3^{\rm f}\\
		0 & 0\\ 
		- \t & 			-\t
	\end{pmatrix}\begin{pmatrix}
		b^{\rm f}_{\In} \\
		b^{\rm f}_{\Out}	\end{pmatrix}.
\end{aligned}%
\end{equation}
A computation shows that the determinant in the left hand side of 
\eqref{eq:mh_control} equals $-8\mu_{\rm s} \r_{\rm s}\t^3\xi_1\xi_3^{\rm s}\td{\xi}_3^{\rm p}$. %
Since  $\xi_1\t\neq0$ in the mixed region, the determinant is nonzero there and the system is elliptic.

On the other hand, as in the hyperbolic-hyperbolic case, \eqref{full_reflection_mh} 
implies that we cannot produce every configuration in the solid side by appropriately choosing the waves on the fluid side.
Given incoming and outgoing microlocal shear vertical waves in the solid side, we can construct them (up to lower order) using waves in the fluid side, and with creation of evanescent $p$-waves, 
if we can solve for $(b^{\rm f}_{\In} ,	b^{\rm f}_{\Out},		b^{\rm p}_{\Ev})$ the system
\begin{equation}\label{eq:mh_control_2}
	\begin{aligned}
	\begin{pmatrix}
		-\r_{\rm f}^{-1}\xi_3^{\rm f} &  \r_{\rm f}^{-1}\xi_3^{\rm f} &- 2\t \td{\xi}_3^{\rm p}\\
		0 & 0& - 4\mu_{\rm s}\xi_1\td{\xi}_3^{\rm p} &\\ 
		- \t & 	-\t& 4\mu_{\rm s}\xi_1^2-2\r_{\rm s}\t^2
	\end{pmatrix}\begin{pmatrix}
		b^{\rm f}_{\In} \\
		b^{\rm f}_{\Out} \\
		b^{\rm p}_{\Ev}
	\end{pmatrix}=
	\begin{pmatrix}
		 	\t \xi_1 	& 	  		\t \xi_1 	\\
		 	2\mu_{\rm s}\xi_1^2-\r_{\rm s}\t^2 		& 	 2\mu_{\rm s}\xi_1^2-\r_{\rm s}\t^2 		
		\\
		 2\mu_{\rm s}\xi_1\xi_3^{\rm s} & 	 -2\mu_{\rm s}\xi_1\xi_3^{\rm s} 
	\end{pmatrix}
	\begin{pmatrix}
		b^{\rm s}_{2,\Out} \\
		b^{\rm s}_{2,\In} \\
	\end{pmatrix}.
\end{aligned}
\end{equation}
The determinant of the matrix on the left is $8\r_{\rm f}^{-1}\mu_{\rm s} \t \xi_1\xi_3^{\rm f}\td{\xi}_3^{\rm p}\neq 0$, so this system is microlocally solvable and we can control the microlocal shear vertical waves from the fluid side.

\subsection{The elliptic-hyperbolic case (Figure~\ref{fig:ell-hyp})}

This case can happen only if $c_{\rm f}<c_{\rm s}$ in a neighborhood of the point at the interface we are interested in.
We have waves on both sides whose traces have wave front sets in the elliptic region for $c_{\rm s}$, $c_{\rm p}$ and the hyperbolic region for $c_{\rm f}$.
We seek to determine Dirichlet data for an outgoing acoustic  wave in the fluid region and an evanescent wave in the solid in terms of Dirichlet data for an incoming acoustic wave in the fluid.
We have the system
\begin{align}
{\nu} \cdot \p_t(U^+_{\Ev} q_{\Ev}^b)=&-\r_{\rm f}^{-1}\Lambda_{\In}^-\psi_{\In,b}^--\r_{\rm f}^{-1}\Lambda_{\Out}^-\psi_{\Out,b}^-	\\*%
-i\calM^+_{\Ev}(q_{\Ev}^{b})=&-\p_t \psi_{\In,b}^- {\nu}-\p_t \psi_{\Out,b}^-\; {\nu}.%
\end{align}
Its solvability reduces to the ellipticity in terms of the outgoing and evanescent amplitudes of %
\begin{subequations}
\begin{align}\label{matrix_system_ell_hyp}
	\begin{pmatrix}
		\t\xi_1 					& 	\t\td{\xi}_3^{\rm p} 					& -\r_{\rm f}^{-1}\xi_3^{\rm f}\,\\
		2\mu_{\rm s} \xi_1^2-\r_{\rm s}\t^2 	&	2\mu_{\rm s}\xi_1\td{\xi}_3^{\rm p}			& 0\\
		2\mu_{\rm s}\xi_1\td{\xi}_3^{\rm s} 			& 	-2\mu_{\rm s} \xi_1^2+\r_{\rm s}\t^2	& \t 
	\end{pmatrix}
	\begin{pmatrix}
		b_{2,\Ev}^{\rm s}\\ b^{\rm p}_{\Ev} \\ {b}^{\rm f}_{\Out}
	\end{pmatrix}
	 &=\begin{pmatrix}
			-\r_{\rm f}^{-1}\xi_3^{\rm f}  \\
			0 \\
			{}-\t 
	\end{pmatrix}
	b^{\rm f}_{\In},
	\intertext{ and }
	(-\mu_{\rm s} \xi_1^2+\r_{\rm s}\t^2) \;b_{1,\Ev}^{\rm s}&=0,\label{eq:ell-hyp-sh}\noeqref{eq:ell-hyp-sh}
	\end{align} %
\end{subequations}
with
\begin{equation}
	\td{\xi}_3^{\rm s}=i\sqrt{|\xi'|_g^2-c_{\rm s}^{-2}\t^2}, \quad \td{\xi}_3^{\rm p}=i\sqrt{|\xi'|_g^2-c_{\rm p}^{-2}\t^2},\quad {\xi}_3^{\rm f}= \sqrt{c_{\rm f}^{-2}\t^2-|\xi'|^2_g}, %
\end{equation} 
all evaluated at $\xi'=(\xi_1,0)$.
From \eqref{eq:ell-hyp-sh}, $b_{1,\Ev}^{\rm s}=0$, so there exist no microlocal ``SH'' evanescent waves (note here that there is no propagating wave in the solid region, so a distinction between microlocal ``SH'' and ``SV'' evanescent waves is only made by analogy to the case were the wave front set of the elastic waves is in the hyperbolic region for the solid).
The determinant of the matrix in \eqref{matrix_system_ell_hyp} is
\begin{align}
	 &\t^4\r_{\rm s}i\sqrt{\xi_1^2-c_{\rm p}^{-2}\t^2}+\r_{\rm f}^{-1}\sqrt{c_{\rm f}^{-2}\t^2-\xi_1^2} \Big((2\mu_{\rm s}\xi_1^2-\r_{\rm s}\t^2)^2-4\mu_{\rm s}^2\xi_1^2\sqrt{\xi_1^2-c_{\rm s}^{-2}\t^2}\sqrt{\xi_1^2-c_{\rm p}^{-2}\t^2}\Big),%
\end{align}
and it has positive imaginary part (note that $\t\neq 0$ since we are in the hyperbolic region for the fluid and $\t$ can vanish only in the elliptic region for any of the three speeds) 
so the system is elliptic.
Therefore the principal amplitude of the outgoing acoustic wave in the fluid and the evanescent wave in the solid are uniquely determined by the one of the incoming acoustic wave in the fluid.

\begin{figure}[h]
	\includegraphics[page=3]{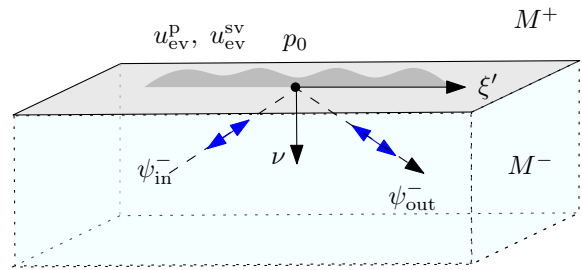}
	\caption{The elliptic-hyperbolic system. The acoustic waves in the fluid experience total internal reflection, with creation of evanescent $p$ and evanescent ``SV'' waves. No microlocal ``SH'' waves are created.}
	\label{fig:ell-hyp}
\end{figure}

\subsection{The hyperbolic-elliptic region (Figure~\ref{fig:hyp-ell})}

In this case we the wave front set of the traces of the various waves is in the hyperbolic region for $c_{\rm p}$ and $c_{\rm s}$ and in the elliptic region for $c_{\rm f}$ (by assumption in the connected component of the elliptic region in which $\t<0$); this case can only happen if $c_{\rm f}>c_{\rm p}$ at $p_0$. %
We thus seek solutions in the fluid region as evanescent waves; we obtain the following system:
\begin{align}
	{\nu} \cdot \p_t(U_{\In}^+ q_{\In,b}^++U_{\Out}^+ q_{\Out,b}^+)=&-\r_{\rm f}^{-1}\Lambda_{\Ev}^-\psi_{\Ev,b}^-	\label{hyp_ell_system_first},	\\*
	-i(\calM_{\In}^+ q_{\In,b}^++\calM_{\Out}^+q_{\Out,b}^+)=&-\p_t \psi_{\Ev,b}^-\; {\nu}\label{hyp_ell_system_second}.
\end{align}
We write $b_{\Ev}^{\rm f}=\calF(\psi^-_{\Ev,b})$ and, as before, at the principal symbol level our system becomes
\begin{subequations}\label{hypell_matrix_system}
\begin{align}
	\begin{pmatrix}
		\t\xi_1 					& 	\t\xi_3^{\rm p} 					& -\r_{\rm f}^{-1}\td{\xi}_3^{\rm f}\\
		2\mu_{\rm s} \xi_1^2-\r_{\rm s}\t^2 	&	2\mu_{\rm s}\xi_1\xi_3^{\rm p}			& 0\\
		2\mu_{\rm s}\xi_1\xi_3^{\rm s} 			& 	-2\mu_{\rm s} \xi_1^2+\r_{\rm s}\t^2	& \t 
	\end{pmatrix}
	\begin{pmatrix}
		b_{2,\Out}^{\rm s}\\ b_{\Out}^{\rm p}\\ {b}_{\Ev}^{\rm f}
	\end{pmatrix} 
	 =&\begin{pmatrix}
		-\t\xi_1 	& 	\t\xi_3^{\rm p} 		\\		
		-2\mu_{\rm s} \xi_1^2+\r_{\rm s}\t^2 &	2\mu_{\rm s}\xi_1\xi_3^{\rm p}\\
		2\mu_{\rm s}\xi_1\xi_3^{\rm s} 	& 	2\mu_{\rm s} \xi_1^2-\r_{\rm s}\t^2		
	\end{pmatrix}
	\begin{pmatrix}
		b_{2,\In}^{\rm s}\\ b_{\In}^{\rm p}
	\end{pmatrix}
	\intertext{and}
	(-\mu_{\rm s} \xi_1^2+\r_{\rm s}\t^2)(b_{1,\In}^{\rm s}+b_{1,\Out}^{\rm s})=&0, \label{hyp_ell_full_reflection}
	\end{align} %
\end{subequations}
where $$\td{\xi}_3^{\rm f}=i\sqrt{|\xi'|^2-c_{\rm f}^{-2}\t^2}.$$
Using \eqref{determinant} with $\xi_3^{\rm f}$ replaced by $\td{\xi}_3^{\rm f}$, it is easy to see that the real part of the determinant is given by $\t^4\r_{\rm s}\xi_3^{\rm p}\neq 0$ (recall that $\t<0$), demonstrating the ellipticity of the system \eqref{hypell_matrix_system} and the microlocal well-posedness of the transmission problem in this case.
The microlocal shear horizontal waves experience total internal reflection.

\begin{figure}[h]
	\includegraphics[page = 4]{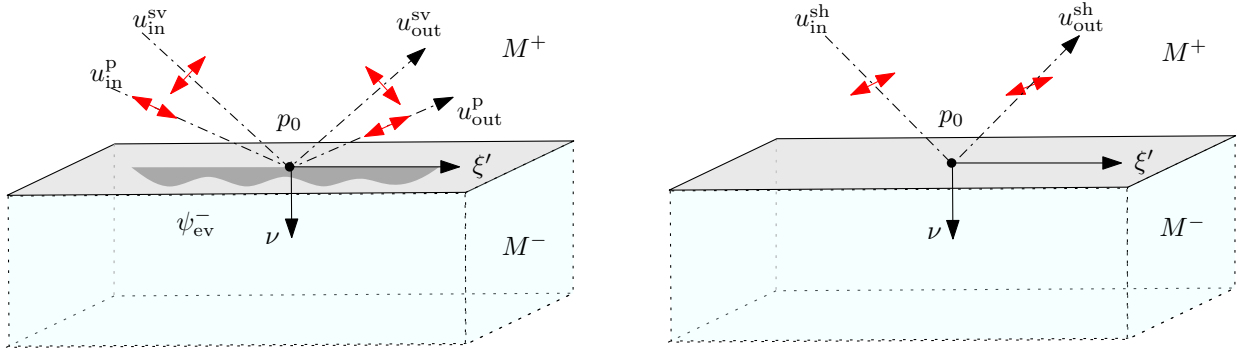}
	\caption{The hyperbolic-elliptic transmission system. We have $p$ and SV waves in the solid and evanescent waves in the fluid. Microlocal SH waves are totally reflected.}
	\label{fig:hyp-ell}
\end{figure}

\subsection{The mixed-elliptic case (Figure~\ref{fig:mix-ell})}

This case can happen only if $c_{\rm s}<c_{\rm f}$ at $p_0$.
We have incoming $\rm s$ waves in the solid meeting the interface at an angle greater than the critical angle for $\rm p$ waves.
The wave front set of the trace at $\G\times \R$ is by assumption contained in the elliptic region for the fluid, with $\t<0$.
We seek $\rm p$ waves and body waves in the fluid as evanescent modes.
Our transmission system takes the form
\begin{align}
{\nu} \cdot \p_t(U_{\Ev,\In}^+ \, q^+_{\Ev,\In,b}+U_{\Ev,\Out}^+ \,q^+_{\Ev,\Out,b})=&-\r_{\rm f}^{-1}\Lambda_{\Ev}^-\psi_{\Ev,b}^-	\label{mix_ell_system_first},	\\*
-i(\calM_{\Ev,\In}^+ \,q^+_{\Ev,\In,b}+\calM_{\Ev,\Out}^+ \,q^+_{\Ev,\Out,b})=&-\p_t \psi_{\Ev,b}^-\; {\nu}\label{mix_ell_system_second}.
\end{align}
Then at the principal symbol level and  for $\xi_2=0$ we find the decoupled system
\begin{subequations}
\begin{align}
	\begin{pmatrix}
		\t\xi_1 					& 	2\t\td{\xi}_3^{\rm p} 					& -\r_{\rm f}^{-1}\td{\xi}_3^{\rm f}\,\\
		2\mu_{\rm s} \xi_1^2-\r_{\rm s}\t^2 	&	4\mu_{\rm s}\xi_1\td{\xi}_3^{\rm p}			& 0\\
		2\mu_{\rm s}\xi_1\xi_3^{\rm s} 			& 	-4\mu_{\rm s} \xi_1^2+2\r_{\rm s}\t^2	& \t 
	\end{pmatrix}
	\begin{pmatrix}
		b_{2,\Out}^{\rm s}\\ b^{\rm p}_{\Ev} \\ {b}^{\rm f}_{\Ev}
	\end{pmatrix}& =\begin{pmatrix}
		-\t\xi_1 	 				\\
		-2\mu_{\rm s} \xi_1^2+\r_{\rm s}\t^2 			\\
		2\mu_{\rm s}\xi_1\xi_3^{\rm s} 	 	
	\end{pmatrix}
	 	b_{2,\In}^{\rm s}\label{mix_ell_reduced_system}, \\
	 	\intertext {and}
	\label{mix_hyp_full_reflection}\noeqref{mix_hyp_full_reflection}
	(-\mu_{\rm s} \xi_1^2+\r_{\rm s}\t^2)(b_{1,\In}^{\rm s}+b_{1,\Out}^{\rm s})&=0,
	\end{align} %
\end{subequations}
where $\td{\xi}_3^{\rm f}=i\sqrt{|\xi'|^2_g-c_{\rm f}^{-2}\t^2}$.
The determinant of the square matrix in \eqref{mix_ell_reduced_system} takes the form
\begin{equation}
	2\big(\t^4\r_{\rm s}\td{\xi}_3^{\rm p}+\r_{\rm f}^{-1}\td{\xi}^{\rm f}_3\big((2\mu_{\rm s}\xi_1^2-\r_{\rm s}\t^2)^2+4\mu_{\rm s}^2\xi_1^2\xi_3^{\rm s}\td{\xi}_3^{\rm p}\big)\big),%
\end{equation}
(cf. \eqref{mix_hyp_determinant}).
Its real part is given by 
$	8\mu_{\rm s}^2\r_{\rm f}^{-1}\xi_1^2\xi_3^{\rm s}\td{\xi}_3^{\rm p}\td{\xi}^{\rm f}_3$, 
which does not vanish (recall that one cannot have $\xi_1=0$ in the elliptic region for $c_{\rm p}$ or $c_{\rm f}$).
We reach the conclusion that the matrix on the left hand side of \eqref{mix_ell_reduced_system}  is elliptic, showing microlocal solvability of the system. Again the microlocal shear horizontal waves experience total internal reflection.
Notice also that by \eqref{mix_ell_reduced_system}, in the absence of incoming SV waves, i.e. if $b_{2,\In}^{\rm s}=0$, or if there are no SV waves at all,  no evanescent waves are created on either side of the interface.
In other words, in such a case we do not obtain surface waves as we do in the elliptic-elliptic case, see below.

\begin{figure}[h]
	\includegraphics[page = 5]{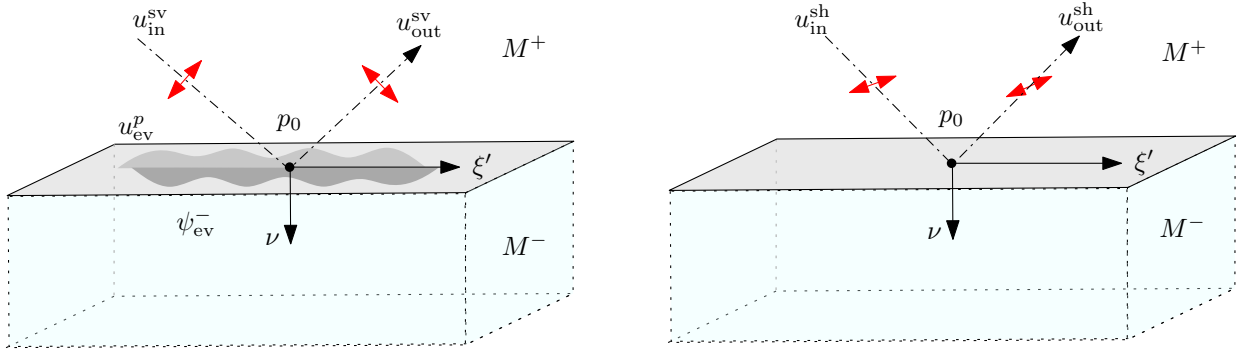}
	\caption{The mixed-elliptic transmission system. We have only $s$ body waves in the solid (SV on the left, SV on the right); the $p$ waves and the acoustic waves in the fluid are evanescent.}
	\label{fig:mix-ell}
\end{figure}

\subsection{The elliptic-elliptic case (Figure~\ref{fig:ell-ell})}

Any nontrivial solutions to the system \eqref{pot_sys_1}-\eqref{pot_tr_2} whose traces at the interface $\G\times\R$ have wave front set in the elliptic region for both $c_{\rm f}$ and $c_{\rm s}$ (thus also automatically for $c_{\rm p}$) cannot be produced by body waves on either side.
The only possibility is that they are produced by sources at the interface.
We seek such solutions as 
 evanescent waves, which decay exponentially away from $\G$.
 As mentioned in the introduction, surface waves at the interface between two media are generally referred to as Stoneley waves and in the particular case of a solid-fluid interface they are often called Scholte waves.
 We look for solutions of the system
\begin{subequations}
	\begin{align}
	{\nu} \cdot \p_t(U_{\Ev}^+ \, q^+_{\Ev,b})=&-\r_{\rm f}^{-1}\Lambda_{\Ev}^-\psi_{\Ev,b}^-	\label{ell_system_first}	\\*
	-i(\calM_{\Ev}^+ \, q^+_{\Ev,b})=&-\p_t \psi_{\Ev,b}^-\; {\nu}.\label{ell_system_second}
	\end{align}
\end{subequations}
As before, we let $(b_{1,\Ev}^{\rm s},b_{2,\Ev}^{\rm s},b_{\Ev}^{\rm p})=\calF\big(q^+_{\Ev}\big)$ and $b^{\rm f}_{\Ev}=\big(\psi^-_{\Ev}\big)$.
With notations as before, at the principal symbol level we obtain the system (with $\xi_2=0$ as usual)
\begin{subequations}
\begin{align}
\label{system_ell_ell}
	A_{\Ev}^{\rm{ee}}\begin{pmatrix}
		b_{2,\Ev}^{\rm s}\\ b_{\Ev}^{\rm p}\\ b_{\Ev}^{\rm f}
	\end{pmatrix}=\begin{pmatrix}
		0\\0\\0
	\end{pmatrix},& \quad A_{\Ev}^{\rm{ee}}=\begin{pmatrix}
		\t\xi_1 					& 	\t \td{\xi}_3^{\rm p} 					& -\r_{\rm f}^{-1}\td{\xi}_3^{\rm f}\,\\
		2\mu_{\rm s} \xi_1^2-\r_{\rm s}\t^2 	&	2\mu_{\rm s}\xi_1\td{\xi}_3^{\rm p}		& 0\\
		2\mu_{\rm s}\xi_1\td{\xi}_3^{\rm s}			& 	-2\mu_{\rm s} \xi_1^2+\r_{\rm s}\t^2	& \t 
	\end{pmatrix}\\ %
\intertext{ and }
	&(-\mu_{\rm s} \xi_1^2+\r_{\rm s}\t^2) \, b_{1,\Ev}^{\rm s}=0. 
	\label{system_ell_ell_b}\noeqref{system_ell_ell_b}
	\end{align}
\end{subequations}
We immediately obtain $b_{1,\Ev}^{\rm s}=0$ and the determinant of $A_{\Ev}^{\rm{ee}}$ becomes
\begin{equation}
	i \Big[\t^4\r_{\rm s}\sqrt{\xi_1^2-c_{\rm p}^{-2}\t^2}+\r_{\rm f}^{-1}\sqrt{\xi_1^2-c_{\rm f}^{-2}\t^2}\Big((2\mu_{\rm s}\xi_1^2-\r_{\rm s}\t^2)^2-4\mu_{\rm s}^2\xi_1^2\sqrt{\xi_1^2-c_{\rm s}^{-2}\t^2}\sqrt{\xi_1^2-c_{\rm p}^{-2}\t^2}\Big)\Big]. %
\end{equation}
Setting $z=\t^2/\xi_1^2$ (recall that $\xi_1\neq 0$), the vanishing of the determinant is equivalent to the secular equation for  Scholte waves  $S_{p_0}(z)=0$, where
\begin{equation}\label{eq:secular_equation}
	S_{p_0}(z)=\xi_1^5 \Big[z^2\r_{\rm s}\sqrt{1-c_{\rm p}^{-2}z}+\r_{\rm f}^{-1}\sqrt{1-c_{\rm f}^{-2}z}\Big((2\mu_{\rm s}-\r_{\rm s}z)^2-4\mu_{\rm s}^2\sqrt{1-c_{\rm s}^{-2}z}\sqrt{1-c_{\rm p}^{-2}z}\Big)\Big]. %
\end{equation}

Equation~\eqref{eq:secular_equation}   has been studied in the geophysical literature, see e.g. \cite{StrickE1956Svfa}, \cite{ansell}.  
It follows from the analysis in \cite{ansell} that for any positive  values of the Lam\'e parameters and the densities at $p_0$ there exists a positive simple root $z:=c_{{\rm Sc}}^2(p_0)$ with  $0<c_{{\rm Sc}}^2(p_0)<\min\{c_{\rm s}^2(p_0),c_{\rm f}^2(p_0)\}$ (along with possibly other real and complex roots, upon appropriately interpreting the square roots).
This root can be viewed as the only positive zero of a complex valued function\\  $S_1(\r_{\rm f},\r_{\rm s}, \l_{\rm f},\l_{\rm s},\mu_{\rm s},z)$ which is holomorphic in $z$ in a neighborhood of $c_{{\rm Sc}}^2(p_0)$ in $\C$ and depends smoothly on the rest of its entries, as long as they are positive.
In the invariant formulation we can now replace $\xi_1 $ by $|\xi'|_g$, and also multiply the third column of $A_{\Ev}^{\rm{ee}}$ by a homogeneous real valued elliptic symbol  $\a(\xi',\t)$ of order 1 in order to make $A_{\Ev}^{\rm{ee}}$ homogeneous of order 2 (this has the effect of turning \eqref{system_ell_ell} into a system for  $(b_{2,\Ev}^{\rm s}, b_{\Ev}^{\rm p},\a^{-1}(\xi',\t)  b_{\Ev}^{\rm f})$).
 Denote this modified matrix valued symbol by $\td{A}_{\Ev}^{\rm{ee}}$.
Then $\td{A}^{\rm{ee}}_{\Ev}(\xi',\t)$ fails to be elliptic at $(p_0,t_0,\xi',\t)\in T^*(\G\times \R)\setminus 0$ when $\t^2=c_{{\rm Sc}}^2(p_0)|\xi'|_g^2$.
In fact, for $(x',t)$ near $(p_0,t_0)$ this symbol fails to be elliptic when $\t^2=c_{{\rm Sc}}^2(x')|\xi'|_g^2$, where $c_{\rm Sc}^2$ is a smooth and positive function near $p_0$.
This can be seen by changing to semigeodesic coordinates with the metric being Euclidean at $x'$, setting up a system as \eqref{system_ell_ell}, and defining $c_{{\rm Sc}}^2(x')$ as the unique positive zero of the function $S_1$ mentioned earlier  corresponding to the Lam\'e parameters and densities evaluated at $x'$ (this zero  is also a simple zero of $S_{x'}$).
Then smoothness of $c_{{\rm Sc}}^2(x')$ can be shown using the implicit function theorem for $z\mapsto S_1(\r_{\rm f},\r_{\rm s}, \l_{\rm f},\l_{\rm s},\mu_{\rm s},z)$, viewed as a function from a subset of $\R^2$ to one of $\R^2$ by writing $z=x+yi$.

Now in a conical neighborhood of the characteristic variety $\Sigma_{\rm Sc}:=\{(x',t,\xi',\t)\in T^*(U\times \R):\t^2=c_{\rm Sc}^2|\xi'|_g^2\}$ write $S_{x'}(\t^2/|\xi'|_g^2)=(\t^2-c_{\rm Sc}^2|\xi'|_g^2)\td{S}(x',t,\xi',\t)$, where $\td{S}$ is an elliptic real valued symbol of order 3.
The adjugate matrix $\adj(\td{A}^{\rm{ee}}_{\Ev})$ is a matrix valued symbol which is homogeneous of order 4, and $-i\adj(\td{A}^{\rm{ee}}_{\Ev})\td{A}^{\rm{ee}}_{\Ev}=\a(\xi',\t)(\t^2-c_{\rm Sc}^2|\xi'|_g^2)\td{S}(x',t,\xi',\t) Id$.
This shows that $\rm{Op}(\td{A}^{\rm{ee}}_{\Ev})$ is an operator of real principal type as defined in \cite{MR661876}, in a suitable open conical set in $T^*(\G\times \R)$,
  which propagates singularities along the null bicharacteristics of $\t^2-c_{{\rm Sc}}^2(x')|\xi'|^2_g$.
   Using it to propagate Cauchy data given at a spacelike hypersurface in $\G\times \R$ such as  $\G\times \{t=t_0\}$, we obtain microlocally non-trivial solutions of \eqref{system_ell_ell}.
Those can then be used as Dirichlet data for evanescent waves on both sides of the interface.

\begin{figure}[h]
	\includegraphics[page = 6]{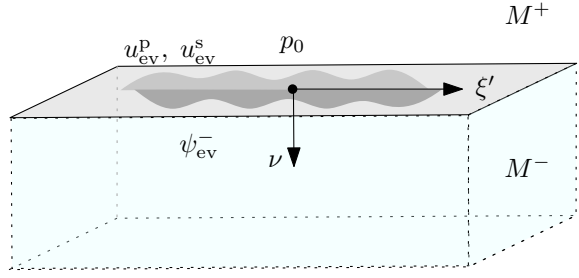}
	\caption{The elliptic-elliptic transmission system, with only evanescent waves on both sides.}
	\label{fig:ell-ell}
\end{figure}

We summarize the findings of this section in the following theorem:

\begin{theorem}\label{thm:first}
    Suppose we are given solutions of the coupled system of evolution equations (\ref{sys_1}-\ref{tr_3}), with Cauchy data supported away from the interface $\G$, and satisfying Assumption \ref{as:int}.
    Then provided that the wave front set of the boundary data of incoming solutions to the interface $\G\times \R$ is disjoint from the glancing regions then the transmission problem for the solid-fluid interface is microlocally well posed.
    Scholte surface waves propagating singularities along  the interface $\G\times \R$  are always possible and can be created by sources at the interface, for instance by Cauchy data on $\G\times \{t_0\}$ for some $t_0\in \R$.
\end{theorem}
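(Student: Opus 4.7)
The plan is to collate the microlocal analyses developed in Sections~\ref{sec:acoustic}--\ref{sec:elastic} and the subsections of Section~\ref{sec:well_posedness_microlocal} into a single parametrix near the interface. Using the equivalence between (\ref{sys_1}-\ref{tr_3}) and (\ref{pot_sys_1}-\ref{pot_tr_3}) together with Lemma~\ref{lm:wf}, and Assumption~\ref{as:cpct_sup} with the reduction following it, it suffices to produce a parametrix for the potential-displacement formulation with both the source $F$ and the initial potential $\psi_0^-$ supported away from $\G$. Away from $\G$ and $\p M$, standard geometric optics---for the decoupled elastic system via Proposition~\ref{prop:decomp}, and for the scalar acoustic equation satisfied by $\psi^-$---yields a parametrix from the Cauchy data \eqref{pot_eq_init}. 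Near any point $(p_0,t_0)\in\G\times\R$, in the semigeodesic coordinates of Section~\ref{sec:acoustic}, I would decompose the trace of the incoming parametrix at $\G\times\R$ via a microlocal partition of unity into the six non-glancing regions catalogued in Sections~\ref{sec:acoustic}--\ref{sec:elastic} and treat each one separately.

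In each of the five cases from hyperbolic-hyperbolic through mixed-elliptic, the transmission conditions \eqref{pot_tr_1}--\eqref{pot_tr_2} yield a matrix pseudodifferential system relating the outgoing-plus-evanescent Dirichlet data to the incoming ones. The principal symbol of its left-hand side is computed explicitly in the corresponding subsection, and its determinant is verified to be nonzero at the covectors of interest---either by a strictly nonvanishing real part, as in \eqref{determinant} and the analogous computations for the hyperbolic-elliptic and mixed-elliptic cases, or by a strictly nonvanishing imaginary part, as in the mixed-hyperbolic and elliptic-hyperbolic cases. Douglis--Nirenberg ellipticity then permits inversion modulo smoothing and uniquely determines the outgoing and evanescent Dirichlet amplitudes to all orders; feeding them into the geometric optics ansatz of Sections~\ref{sec:acoustic}--\ref{sec:elastic} yields the reflected, transmitted and evanescent parametrices. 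Summing them with the incoming parametrix and combining via a partition of unity on $\G\times\R$ gives the microlocal parametrix for the coupled problem in these five cases.

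For the Scholte surface waves the argument is different, since in the elliptic-elliptic region the symbol $\td{A}_{\Ev}^{\rm ee}$ is not elliptic but instead vanishes on $\Sigma_{\rm Sc}=\{\t^2 = c_{\rm Sc}^2(x')|\xi'|_g^2\}$, where $c_{\rm Sc}^2(x')$ is the simple positive root of the secular equation \eqref{eq:secular_equation} provided by \cite{ansell}. The factorization $-i\,\adj(\td{A}_{\Ev}^{\rm ee})\,\td{A}_{\Ev}^{\rm ee} = \alpha(\t^2 - c_{\rm Sc}^2|\xi'|_g^2)\,\td{S}\,\Id$ with $\td{S}$ a scalar elliptic symbol identifies $\mathrm{Op}(\td{A}_{\Ev}^{\rm ee})$ as an operator of real principal type on $\G\times\R$ in the sense of \cite{MR661876}. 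Propagation of singularities along the null bicharacteristics of $\t^2 - c_{\rm Sc}^2|\xi'|_g^2$ then produces nontrivial solutions of the homogeneous system \eqref{system_ell_ell} from nontrivial Cauchy data posed on a spacelike slice such as $\G\times\{t_0\}$; these serve as Dirichlet data for matched evanescent waves on both sides, whose wave front sets lie over $\Sigma_{\rm Sc}$ and thus propagate singularities along $\G\times\R$.

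The main obstacle I expect is the justification step, namely showing that the parametrix differs from a genuine solution by a $C^\infty$ error; this is deferred to Section~\ref{ssec:justification} and relies crucially on the explicit description of $D(P)$ obtained in Section~\ref{sec:well_posedness}. A secondary technical point is the smoothness of $c_{\rm Sc}^2(x')$ in $x'$, which I would obtain by applying the implicit function theorem to the holomorphic function $S_1$ at its simple positive root, using that the parameters $\r_{\rm f},\r_{\rm s},\l_{\rm f},\l_{\rm s},\mu_{\rm s}$ depend smoothly on $x'$.
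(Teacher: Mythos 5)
Your proposal is correct and follows essentially the same route as the paper: reduce to the potential--displacement formulation with source and initial potential supported away from $\Gamma$ via Lemma~\ref{lm:wf} and Assumption~\ref{as:cpct_sup}, build the interior parametrix by geometric optics using Proposition~\ref{prop:decomp}, classify into the non-glancing regions, verify Douglis--Nirenberg ellipticity of the transmission matrix in the five non-elliptic-elliptic cases, identify the elliptic-elliptic symbol as real principal type with characteristic set $\Sigma_{\rm Sc}$, and defer parametrix justification to Section~\ref{ssec:justification}. The only minor imprecision is in your classification of how the determinants are shown nonvanishing: in the mixed-hyperbolic case the paper's argument is that the real part $2\r_{\rm f}^{-1}\xi_3^{\rm f}(2\mu_{\rm s}\xi_1^2-\r_{\rm s}\t^2)^2$ is nonnegative and, precisely where it vanishes, the imaginary part is nonzero, rather than the imaginary part being strictly nonvanishing throughout.
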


\section{Justification of the Parametrix}
\label{ssec:justification}{}

In this section we justify the parametrix which can be constructed for \eqref{pot_sys_1}-\eqref{pot_tr_3} with initial data supported away from the interface and boundary, using the techniques described in Sections \ref{sec:acoustic} and \eqref{sec:elastic} once solvability for the transmission systems at the principal level has been established (in Section \ref{sec:well_posedness_microlocal}).
That is, we show that the parametrix  produced in that way differs from an actual solution by a smooth function/vector field.
The method we use is an adaptation of one used by Taylor in \cite{MR535598}. For a discussion on the justification of the parametrix in the more standard solid-solid or fluid-fluid case, see Appendix \ref{appendix_a}.

The difference between the parametrix and an actual solution satisfies the system 
\begin{equation}\label{eq:nonhomog_system}
\begin{cases}
	(\p_t^2-P^+) u^+ =f^+ &\quad\text{ in }M^+ \times \R,\\*
	(\p_t^2-\td{P}^-) {\psi}^-=f^-&\quad\text{ in }M^-\times \R,\\*
	\nu\cdot\p_tu^++\r_{\rm f}^{-1}\; \p_{\nu}{\psi}^-=h_1&\quad\text{ on }\Gamma\times \R,\\*
N(u^+)+\p_t\psi^-\; \nu=h_2 &\quad\text{ on }\Gamma\times \R,\\*
N(u^+)=h_3&\quad\text{ on } \p M\times \R,\\*
(u^+,\psi^-)=0&\quad \text{ for }t\ll0,
\end{cases}
\end{equation}
where 
$P ^+=\r_{\rm s}^{-1} E$, $\td{P}^-=\l_{\rm f}\div(\r_{\rm f}^{-1}\n(\cdot ))$, 
 $f^+\in C^\infty (\oM^+\times \R;\pi_1^*TM)$, $f^-\in C^\infty (\oM^-\times \R)$, $h_1\in C^\infty(\G\times \R)$, $h_2\in C^\infty(\G\times \R;\pi_1^*TM)$, $h_3\in C^\infty (\p M\times  \R;\pi_1^*TM)$ and $\pi_1:M\times \R\to M$ is the projection.
To justify that the parametrix has the same smoothness properties as the actual solution we need to show that $u^+$, $\psi^-$ are smooth up to the interface and boundary.

The timelike hypersurfaces $\G\times \R$ and $\p M\times \R$ are non-characteristic for $\p_t^2-P^+$ and  $\p_t^2-\td{P}^-$ and knowledge of $N(w^+)\big|_{\G}$ allows the recovery of $ \p_\nu w^+ \big|_{\G}$ from $w^+ \big|_{\G}$.
With the Cauchy-Kovalevskaya method and Borel's lemma we can produce $w^+$, $\chi^-$ which are smooth up to $\p M\times \R$ and $\G\times \R$, vanish for $t\ll 0$, and satisfy
\begin{equation}
	\begin{cases}
		\p_\nu^k (\p_t^2{w}^+-P^+ {w}^+-f^+)=0&\text{ for } k\geq 0\text { on }\p M^+\times \R,\\
		{w}^+=0& \text { on }\G\times \R,\\
		N({w}^+)=h_2& \text { on }\G\times \R,\\
		N({w}^+)=h_3& \text { on }\p M\times \R,
	\end{cases}
	\end{equation}
and 
\begin{equation}
	\begin{cases}
		\p_\nu^k (\p_t^2{\chi}^--\td{P}^- {\chi}^--f^-)=0&\text{ for } k\geq 0\text { on }\G\times \R,\\
		{\chi}^-=0& \text { on }\G\times \R,\\
		\p_\nu{\chi}^-=\r_{\rm f} h_1&\text { on }\G\times \R. \\ %
	\end{cases}
\end{equation}
Then the difference $(z^+,{\phi}^-):=(u^+-{w}^+,\psi^--{\chi}^-)$  satisfies
\begin{subequations}
		\begin{align}
			(\p_t^2-P ^+) z^+ =\td{f}^+ &\quad\text{ in }M^+ \times \R,\label{hom_sys_1}\noeqref{hom_sys_1}
			\\*
			(\p_t^2-\td{P}^-) {{\phi}}^-=\td{f}^-&\quad\text{ in }M^-\times \R,\label{hom_sys_2}\noeqref{hom_sys_2}
			\\*
			\nu\cdot\p_tz^+=-\r_{\rm f}^{-1}\; \p_{\nu}{{\phi}}^-&\quad\text{ on }\Gamma\times \R,\label{hom_tr_1}\noeqref{hom_tr_1}
			\\*
		N(z^+)=-\p_t{\phi}^-\; \nu &\quad\text{ on }\Gamma\times \R \label{hom_tr_2}\noeqref{hom_tr_2}
		\\*
		N(z^+)=0&\quad\text{ on } \p M\times \R,\label{hom_tr_3}\noeqref{hom_tr_3}
		\\*
		(z^+,{\phi}^-)=0&\quad \text{ for }t\ll 0\label{hom_sys_4}\noeqref{hom_sys_4},
		\end{align}
\end{subequations}
where $\td{f}^\pm$ vanish to infinite order at $\G\times \R$ and $\p M\times \R$.

Now pass to the displacement-displacement system (\ref{sys_1_aux}-\ref{tr_3_aux}) we used to show well posedness in Section \ref{sec:well_posedness}: set 
 $u^-=z_0-\int_{-\infty}^t\r_{\rm f}^{-1}\n\phi^-(x,\t)d\t$, where $z_0$ is divergence free, constant in time and $\nu\cdot z_0=0$.
 Observe that by \eqref{eq:potential_neu}, the potential part of the displacement $u^-$ vanishes for  $t\ll 0$ since the pressure then is 0 by \eqref{eq:psi-pi} and \eqref{hom_sys_4}.
Using that $\p_t^2 u^-=-\r_{\rm f}^{-1}\n \p_t \phi^{-}$ and 
\begin{equation}\label{phi_u}
 \p_t\phi^-=- \l_{\rm f} \div u^-+\int_{-\infty}^{t} \td{f}^-(x,\t) d\t,
\end{equation}
which follows by \eqref{hom_sys_2} upon integrating in time and using the expression above for $u^-$, we find
\begin{equation}\label{eq:sys_2_nonhom}
\begin{cases}
	\p_{t}^2z^+-\r_{\rm s}^{-1}Ez^+=\td{f}^+ &\quad \text{ in } {M}^+\times\R,\\
	\p_t^2u^--\r_{\rm f}^{-1}\n \l_{\rm f} \div u^-=F^-&\quad \text{ in } {M}^-\times\R,\\
	 z^+\cdot \nu=u^-\cdot \nu&\quad \text{ on } \Gamma\times \R,\\
	  N(z^+)=\l_{\rm f} (\div u^- )\;\nu&\quad \text{ on } \Gamma\times \R,\\
	   N(z^+)=0&\quad \text { on }\p M\times \R,\\
	  (z^+, \div u^-)=0&\quad \text { for }t\ll 0,\\
\end{cases}
\end{equation}
where $F^-(x,t)=-\r_{\rm f}^{-1}\int_{-\infty}^t\n\td{f}^-(x,\t)d\t$. Note that $\td{f}^+$ and $F^-$ are smooth and both vanish to infinite order at $\G\times \R $ and $\p M\times \R$, thus for each $s\geq 0$ Proposition \ref{prop:domain} implies $\mathbf{F}(s):=(\td{f}^+(\cdot, s), F^-(\cdot, s))\in D(P^k)$ for all $k=1,2,\dots$.
Now \eqref{eq:sys_2_nonhom}
 can be solved using Duhamel's formula:
\begin{equation}
(z^+,u^-)(\cdot, t)=(0,z_0)+\int _{-\infty}^t\frac{\sin(\sqrt{-P}(t-s))}{\sqrt{-P}} \mathbf{F}(s)ds, \qquad \div z_0=0,\quad \p_t z_0\equiv 0,\quad z_0 		\cdot \nu \big|_{\G}=0.
\end{equation}
By the functional calculus, $(z^+,u^-) \in C^\infty(\R; D(P^k))$ for all $k$.
Therefore,  Corollary \ref{cor:estimate} implies that $z^+\in C^\infty(\R; H^{2k}(M^+))$ and $\div u^-(x,t)\in C^\infty(\R; H^{2k-1}(M^-))$ for all $k\geq 0$.
Thus by Sobolev embedding  $z^+$ (and hence also $u^+$ in \eqref{eq:nonhomog_system}) is smooth  up to $\G\times \R$ and $\p M\times \R$, and $\div u^-$ is smooth up to $\G\times \R$.
We conclude by \eqref{phi_u} that ${\phi}^-$, hence also $\psi^-$ in \eqref{eq:nonhomog_system}, is smooth up to the interface.

Once a parametrix $(\td{u}^+,\td{\psi}^-)$ has been constructed for \eqref{pot_sys_1}-\eqref{pot_tr_3}, differing from an actual solution by a smooth vector field/function, we can obtain a parametrix to the original system (\ref{sys_1}-\ref{tr_3}) by setting $  (\td{u}^+,\td{w}^+,\td{p}^-,\td{v}^-)=(\td{u}^+,\p_t\td{u}^+,\p_t\td{\psi}^-,\r_{\rm f}^{-1}(  Z_0-\n \td{\psi}^-)),$ where $Z_0$ is the solenoidal part of the decomposition \eqref{helmholtz} of the initial data for the actual solution.

\section{The inverse problem} \label{sec_IP}
In this section, we consider the inverse problem of recovery of the  solid coefficients $\rho_{\rm s}$, $\lambda_{\rm s}$, $\mu_{\rm s}$ and the fluid ones $\rho_{\rm f}$, $\lambda_{\rm f}$ from boundary measurements.  %
As explained in the Introduction, we will use the boundary rigidity result in \cite{MR3454376}. To recover $c_{\rm f}$ in $M_-$, we would need rays in $M_-$ which can be created by incoming ones from $\bo$, eventually creating  a ray back to $\bo$; moreover, we want all such rays in $M_-$ to have such property. Hence, we have to exclude speeds $c_{\rm f}$ allowing for totally reflected rays in $M_-$. This happens when $c_{\rm f}|_{\Gamma^-} < c_{\rm s}|_{\Gamma^+} $, where $c|_{\Gamma_\pm}$ are limits from $M_\pm$. Therefore, we assume 
\begin{equation}  \label{condG}
c_{\rm s}|_{\Gamma^+} < c_{\rm f}|_{\Gamma^-}.
\end{equation}
Then the rays hitting $\Gamma_-$ would leave a trace on $T^*(\Gamma\times \R)$  either in the hyperbolic-hyperbolic region (excluding tangential rays), see Section~\ref{ssec:hh},  or in the mixed-hyperbolic one, see Section~\ref{ssec:mh}.

We assume the following foliation condition. 
Assume that there exist two  smooth non-positive functions $\foliation_{\rm s}$ and $\foliation_{\rm p}$ in $\bar M_+$ with $d\foliation\not=0$, $\foliation^{-1}(0)=\bo$, and $\foliation^{-1}(-1)=\Gamma$, where $\foliation$ is either $\foliation_{\rm s}$ or $\foliation_{\rm p}$.  Assume that the level sets $\foliation_{\rm s}^{-1}(c)$, $\foliation_{\rm p}^{-1}(c)$, $c\in [-1,0]$, are strictly convex w.r.t.\ the  speed $c_{\rm s}$, $c_{\rm p}$ in $M_+$, respectively, when viewed from $\Gamma_{0}=\bo$. Of course, we may have just one such function, i.e., $\foliation_{\rm s}=\foliation_{\rm p}$ is possible. Assume also that there is a smooth non-positive $\foliation_{\rm f}$ defined on $\bar M_-$, so that $\foliation^{-1}(0)=\Gamma$, and $d \foliation_{\rm f} \not=0$ except at one interior point, where  $\foliation_{\rm f} $ attains its minimum. We require that the level set $\foliation_{\rm  f}^{-1}(c)$,  $c<-1$, is  strictly convex w.r.t.\ the  speed $c_{\rm f}$  in $M_-$, when viewed from $\Gamma$.  

Recall that  the foliation condition implies non-trapping  as noted in \cite{MR3454376}, for example. In our case, in $M_+$, this means that rays in $M_+$ not hitting $\Gamma$, would hit $\bo$ both in the future and in the past. In $M_-$, we have the usual non-trapping property. 

We define the outgoing Neumann-to-Dirichlet map $\mathcal{N}_{\rm out}$ as follows. Given $f\in C_0^\infty(\bo \times \R_+;\, \C^3)$, let $u$ be the solution to (\ref{sys_1}--\ref{tr_2}) with the homogeneous condition \eqref{tr_3} replaced by $N(u^+) =f$ on $\bo\times\R$, and zero Cauchy data  at $t=0$ zero instead of \eqref{eq_init}. Set $\mathcal{N}_{\rm out}f = u $ on $\bo\times\R$. Then $\mathcal{N}_{\rm out}$ measures the response to boundary sources related to waves propagating to the future. 

Note first that $\mathcal{N}_{\rm out}$ is well defined since we can construct a solution to $N(u^+)=f$ near $\bo\times\R$ locally (not solving the PDE), subtract it from the actual solution, and reduce the problem to one with homogeneous Neumann boundary condition but a non-trivial source. Then we can use Duhamel's principle to reduce it to a superposition of linear problems of the kind (\ref{sys_1}--\ref{tr_3}) with non-trivial Cauchy data of the kind \eqref{eq_init}. 

\begin{theorem}\label{thm_inverse} 
Assume we have two systems in $M$ with coefficients  $ \rho_{\rm s}$, $ \mu_{\rm s}$, $ \lambda_{\rm s}$   and  $\tilde \rho_{\rm s}$, $\tilde \mu_{\rm s}$, $\tilde \lambda_{\rm s}$ in $M_+$ and $\tilde M_+$, respectively; and $(\rho_{\rm f}, \lambda_{\rm f})$,  and $(\tilde \rho_{\rm f}, \tilde \lambda_{\rm f})$ in $M_-$ and $\tilde M_-$, respectively. Assume  $\mathcal{N}_{\rm out}=\tilde{\mathcal{N}}_{\rm out}$ with $T\gg1$. Assume the foliation condition and  \eqref{condG}  for each one of them.  
Then  $\Gamma=\tilde \Gamma$, and $c_{\rm s}=\tilde c_{\rm s}$,  $c_{\rm p}=\tilde c_{\rm p}$ in $M_+$, and $c_{\rm f}=\tilde c_{\rm f}$ in $M_-$. Also,  if $c_{\rm p}\not=2c_{\rm s}$ in $M_+$, then $\rho_{\rm s}=\tilde\rho_{\rm s}$ in $M_+$. 
\end{theorem}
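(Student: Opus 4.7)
The plan is to read off travel-time (lens) data of the three wave modes from the Fourier integral operator $\mathcal{N}_{\rm out}$ and then invoke the Stefanov--Uhlmann--Vasy boundary rigidity result under the foliation condition. First, by the parametrix construction in Sections~\ref{sec:acoustic}--\ref{sec:well_posedness_microlocal}, an incoming Neumann source on $\bo\times\R$ propagates into $M^+$ as a superposition of microlocal $s$- and $p$-waves whose singularities travel along null bicharacteristics of $\t^2-c_{\rm s}^2|\xi|_g^2$ and $\t^2-c_{\rm p}^2|\xi|_g^2$; the microlocal well-posedness (Theorem \ref{thm:first}) describes exactly how singularities reflect at $\bo$ and split, reflect, and transmit at $\Gamma$. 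Consequently, $\WF'(\mathcal{N}_{\rm out})$ determines the broken scattering relation of the three speeds.

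I would next recover $c_{\rm s}$ and $c_{\rm p}$ in $M^+$. For $s$- and $p$-waves that enter from $\bo$, reflect back from $\bo$ or $\Gamma$, and return to $\bo$ without entering $M^-$, the travel-time function is encoded in the canonical relation of $\mathcal{N}_{\rm out}$; short such rays exist near $\bo$ by strict convexity of the $\foliation_{\rm s}$ and $\foliation_{\rm p}$ level sets. A layer-stripping argument along the foliation, applying the local boundary rigidity theorem of \cite{MR3454376} in each layer (as in \cite{SUV_elastic, stefanov2020transmission, caday2019recovery}), yields $c_{\rm s}$ and $c_{\rm p}$ throughout $\bar M^+$; in particular the interface $\Gamma=\foliation^{-1}(-1)$ is determined and thus $\Gamma=\tilde\Gamma$.

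To recover $c_{\rm f}$ in $M^-$, I exploit the hyperbolic--hyperbolic (and mixed--hyperbolic) regime guaranteed by \eqref{condG}: a non-tangential $s$- or $p$-ray in $M^+$ impinging on $\Gamma$ generates a genuinely transmitted acoustic singularity in $M^-$, by the ellipticity of the transmission systems in Sections \ref{ssec:hh}--\ref{ssec:mh}. By the fluid foliation condition, such an acoustic ray is non-trapping and returns to $\Gamma$, where it produces outgoing $s$- and $p$-waves in $M^+$ that reach $\bo$. Since $c_{\rm s},c_{\rm p}$ and $\Gamma$ are already known from Step 2, one can subtract off the $M^+$-legs of the total travel time recorded by $\mathcal{N}_{\rm out}$ and isolate the travel time and scattering relation of acoustic rays between the corresponding points on $\Gamma$. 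This provides the lens data for the metric $c_{\rm f}^{-2}g$ on $M^-$ relative to $\Gamma$; a second application of the boundary rigidity result of \cite{MR3454376} under the foliation condition for $\foliation_{\rm f}$ recovers $c_{\rm f}$ in $\bar M^-$.

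Finally, assuming $c_{\rm p}\neq 2c_{\rm s}$, the density $\rho_{\rm s}$ is extracted from principal amplitudes of reflected waves. Once the three speeds are known, the principal symbols in \eqref{A_out}--\eqref{eq:A_in} (and their $\bo$-analogues with homogeneous Neumann boundary condition, cf.\ \cite[\S 8]{stefanov2020transmission}) depend linearly on $\rho_{\rm s}$, $\mu_{\rm s}$, $\lambda_{\rm s}$ via the relations $\mu_{\rm s}=\rho_{\rm s}c_{\rm s}^2$, $\lambda_{\rm s}=\rho_{\rm s}(c_{\rm p}^2-2c_{\rm s}^2)$; the condition $c_{\rm p}\neq 2c_{\rm s}$ ensures that the reflection coefficients are non-degenerate functions of $\rho_{\rm s}$, allowing point-wise recovery by the layer stripping argument, as in \cite{stefanov2020transmission}. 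The main obstacle I anticipate is Step 3: carefully disentangling mode-converted signals so that the purely $M^-$ portion of a broken ray can be isolated from the complicated multi-reflection structure of $\mathcal{N}_{\rm out}$, which is where the assumption \eqref{condG} (ruling out total internal reflection back into $M^+$ from below) is crucial.
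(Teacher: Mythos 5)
Your proposal follows the paper's overall strategy — read off lens data from $\WF'(\mathcal N_{\rm out})$ and reduce to the boundary rigidity result of \cite{MR3454376} — and Steps 1, 2, and 4 match what the paper does (the paper delegates the recovery of $\Gamma$, $c_{\rm s}$, $c_{\rm p}$, and $\rho_{\rm s}$ entirely to \cite[Lemma~10.1]{stefanov2020transmission}, with the $c_{\rm p}\neq 2c_{\rm s}$ condition appearing there). Step 3 is where you diverge, and it is where you also have a gap.

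The paper recovers $c_{\rm f}$ by first fixing the desired acoustic geodesic $\gamma_0$ from $x$ to $y$ in $M^-$ and then using the \emph{controllability} of the fluid side from the solid side (proved around \eqref{eq:control_hh} and \eqref{eq:mh_control}): one picks Neumann data at $\partial M$ so that the incoming plus outgoing waves in $M^+$ produce exactly the prescribed refracted acoustic wave at $x$ and no incoming acoustic wave, on principal level, and then cancels spurious reflections at $\partial M$ by superposing waves with opposite Neumann data. After establishing via \cite{Rachele_2000} and the first step that the solid-side coefficients agree at $\partial M$ and that $c_{\rm p}$, $c_{\rm s}$ agree in $M^+$, the two systems produce the same ray hitting $\Gamma$ at $x$, and a time-reversal/first-arrival argument then forces $\tilde\gamma_0$ to hit $\Gamma$ at the same point $y$ at the same time $t_2$. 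By contrast, you fire a single $s$- or $p$-ray into $\Gamma$ and claim it "generates a genuinely transmitted acoustic singularity in $M^-$, by the ellipticity of the transmission systems." This is not a valid inference: ellipticity of \eqref{matrix_system} (or \eqref{matrix_system_mh}) only guarantees that the system is \emph{solvable} for the outgoing amplitudes; it says nothing about whether the transmitted amplitude $b^{\rm f}_{\Out}$ is nonzero. In fact it can vanish — for a normally incident SV wave ($\xi_1=0$) the Cramer numerator for $b^{\rm f}_{\Out}$ is $4\mu_{\rm s}\xi_1\xi_3^{\rm s}\rho_{\rm s}\tau^3\xi_3^{\rm p}=0$; for a p-wave the numerator is $2(2\mu_{\rm s}\xi_1^2-\rho_{\rm s}\tau^2)\rho_{\rm s}\tau^3\xi_3^{\rm p}$, which happens not to vanish in the hyperbolic region only because $\lambda_{\rm s}>0$ rules out $\tau^2/\xi_1^2 = 2c_{\rm s}^2 > c_{\rm p}^2$. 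So while your argument is likely patchable by restricting to p-incidence in the hyperbolic-hyperbolic region (or by generic-angle arguments), as written it rests on a false justification. The paper's controllability route avoids these case-by-case nonvanishing checks altogether, and it also resolves the multi-arrival disentanglement that you explicitly flag as your main anticipated obstacle (by identifying $y$ as the first return to $\Gamma$ and tracking the time-reversed ray), rather than leaving it open.
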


\begin{proof}
The first part of the theorem, concerning the recovery of $\Gamma$ and the elastic parameters in $M_+$ follows directly from \cite[Lemma~10.1]{stefanov2020transmission}. The only difference is that we have the ND instead of the DN map but Dirichlet data can be easily converted to Neumann and vice-versa, microlocally, by ellipticity arguments.

We prove below  $c_{\rm f}=\tilde c_{\rm f}$ in $M_-$. We follow the proof of \cite[Lemma~10.2]{stefanov2020transmission} here.

Choose two points $x$, $y$, on $\Gamma$ connected by a unit speed geodesic $\gamma_0$  of $c_{\rm f}^2 g$  hitting $x$ and $y$ at times $t_1$ and $t_2$, respectively, see Figure~\ref{fig:IP}. We chose a microlocal solution in $M_-$ concentrated near $\gamma_0$.
\begin{figure}[h] 
	\includegraphics[page = 7]{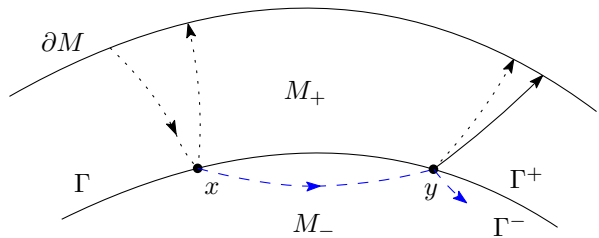}
	\caption{Illustration to the proof of Theorem~\ref{thm_inverse}. Here, only an s waves hits $x$ from $M_+$, and s wave only reflects but we could have two p waves in addition as well. 
	}
	\label{fig:IP}
\end{figure}
 The projected singularities near $x$ are either in the hyperbolic-hyperbolic region, see Section~\ref{ssec:hh} or in the mixed-hyperbolic one, see Section~\ref{ssec:mh} with the exception of a set of geodesics of measure zero (giving rise to tangential rays in $M_+$). In either case, the fluid side is controllable from the solid one: one can choose incoming and outgoing solutions at $x$ to have the refracted fluid wave to be the prescribed one at $x$, and no incoming one at $x$ from $M_-$, on principal level. We extend the outgoing waves back to $M_+$ a bit outside $M$, where we extend the coefficients (of both systems, equally) in a smooth way, and cancel possible reflections at $\partial M$ by sending outgoing waves with opposite Neumann data back to $M_+$,  on principal level. The analysis in  \cite{stefanov2020transmission} shows that this is possible. 

When the wave reaches $y$, it will create a reflected fluid wave back to $M_-$ and two, or one refracted waves into $M_+$. At least one will be non-zero. 
We kill  possible reflections as above, in other words, we may assume that they leave $M$.

Consider the second, ``tilded'' system now. We apply the same Neumann condition and assume the same Dirichlet data. 
By \cite{Rachele_2000}, $ (\rho_{\rm s}, \mu_{\rm s}, \lambda_{\rm s})$ and $ (\tilde \rho_{\rm s}, \tilde \mu_{\rm s}, \tilde \lambda_{\rm s})$   coincide at $\bo$ at infinite order. By the first step, $c_{\rm p} = \tilde c_{\rm p}$, $c_{\rm s} = \tilde c_{\rm s}$ in $M_+$. The rays leading to $x$ for both systems would be the same (and therefore, the ``tilded'' ones would really hit $x$ as well) but the amplitudes are not necessarily equal. The energy of the two waves combined would be positive however. 
The $\td{c}_{\rm f}g$ geodesic $\tilde\gamma_0$ in $M_-$ may not hit $\Gamma$ again at $y$ a priori but we can do time reversal from $\bo$ back to $\Gamma$ to see that in fact, it does; and that happens at the same time $t_2$. There might be other rays hitting $\bo$ since in $M_-$, there is a reflected ray which will eventually refract; and some of them may even hit $\bo$ earlier. This is not a problem since we can identify $y$ on $\Gamma_+$ as the first point at which a singularity comes back.  

This argument proves that the travel time between $x$ and $y$ is the same for both $c_{\rm f}$ and $\tilde c_{\rm f}$.  This is true for pairs $(x,y)\in \Gamma\times\Gamma$ away from a zero measure set. We can extend it for all $(x,y)\in \Gamma\times\Gamma$ by continuity.  Therefore, $c_{\rm f}= \tilde c_{\rm f}$ in $M_-$ by \cite{MR3454376}.
\end{proof}

\appendix\section{Well Posedness and Justification of Parametrix in the Solid-Solid and Fluid-Fluid Case}\label{appendix_a}

Although the main focus of the present paper is the transmission problem at the interface between a solid and a fluid, in this appendix we discuss how with similar methods to the ones used in Sections \ref{sec:well_posedness} and \ref{ssec:justification} one can prove the justification of a parametrix in the case of two solids or two fluids being in contact.
Such a parametrix for the solid-solid case was constructed in \cite{stefanov2020transmission}, but it was not shown that the difference from an actual solution is smooth all the way to the interface.
As an intermediate step, we also discuss well posedness.
The results in this appendix in the solid-solid case were mentioned in \cite{hansen2021propagation}, though without detailed proofs.
The justification of the parametrix again follows  \cite{MR535598}. In the fluid-fluid case (with the pressure satisfying an acoustic equation on both sides of the interface), it also follows from \cite{MR1183346}, which used different methods.

\subsection{Well Posedness}\label{sssec:ss_ff}

Suppose that instead of our original system (\ref{sys_1}-\ref{tr_3}) we either had both $M^\pm$ occupied by solids or had both of them occupied by inviscid fluids.
Below we assume that the setup regarding the geometry of the domains and the metric is as described in Section \ref{sec:setup_and_model}.
So suppose we had one of the following two systems with transmission conditions:
\begin{subequations}
  \begin{align}
    \p_t^2u_1^\pm=(\r_1^\pm)^{-1}E^\pm_1 u_1^\pm&\quad \text{ on }M^\pm \times \R,\label{eq:s-s1} \\*
    N^+(u_1^+)=N^-(u_1^-)&\quad \text{ on }\G\times \R,\label{eq:s-s2}\noeqref{eq:s-s2} \\*
    u_1^+=u_1^-&\quad \text{ on }\G\times \R,\label{eq:s-s3}\noeqref{eq:s-s3} \\*
    N^+(u^+_1)=0&\quad \text{ on }\p M\times \R\label{eq:s-s4},
  \end{align}
\end{subequations}
or 
\begin{subequations}
  \begin{align}
    \p_t^2p_2^\pm=\l_2^\pm\div ((\r_2^\pm) ^{-1}\n p_2^\pm)&\quad \text{ on }M^\pm \times \R,\label{eq:f-f1}\noeqref{eq:f-f1} \\*
    (\r_2^-) ^{-1}\p_\nu p_2^-=(\r_2^+) ^{-1}\p_\nu p^+_2 &\quad \text{ on }\G\times \R,\label{eq:f-f2}\noeqref{eq:f-f2} \\*
    p_2^+=p_2^-&\quad \text{ on }\G\times \R,\label{eq:f-f3}\noeqref{eq:f-f3} \\*
    p_2^+=0&\quad \text{ on }\p M\times \R,\label{eq:f-f4}\noeqref{eq:f-f4}
  \end{align}
\end{subequations}
both subject to Cauchy data at $t=0$, corresponding to the solid-solid and the fluid-fluid case respectively.

Equations (\ref{eq:s-s1}-\ref{eq:s-s4}) are a system for the (complexifications of the) vector valued displacements in the two solids; by choosing global coordinates we assume for simplicity that $u^\pm$ is $\C^3$-valued.
In \eqref{eq:s-s1}, the elastic wave operator is as described in Section \ref{sec:setup_and_model} on each side of the interface $\G$, with Lam\'e parameters $\l_1^\pm, $ $\mu_1^\pm$ which are positive and smooth all the way to $\G$ and $\p M$ but not necessarily matching at $\G$.
The transmission conditions  \eqref{eq:s-s2}-\eqref{eq:s-s3} guarantee continuity of  traction and displacement across the interface respectively, whereas \eqref{eq:s-s4} stands for vanishing of the traction across the surface of contact of the solid and vacuum (or air by approximation).
The densities $\r_1^\pm$ are positive and smooth up to the interface/boundary but might jump at $\G$.

The system (\ref{eq:f-f1}-\ref{eq:f-f4}) describes the scalar valued acoustic pressure for inviscid fluids on $M^\pm$.
The transmission condition \eqref{eq:f-f3} originates from continuity of traction at the interface $\G$, whereas \eqref{eq:f-f2} from continuity of the normal component of displacement at $\G$ (recall that according to \eqref{sys_3} we have $\p_t v_2^\pm=-(\r_2^\pm)^{-1}\n p_2^\pm$, where $v_2^\pm$ stands for the velocity field of the fluid in $M^\pm$).
\eqref{eq:f-f4} stands for vanishing of traction across the surface of contact of the fluid and vacuum.
Again, $\l_2^\pm$ and $\r_2^\pm$ are smooth and positive all the way to $\p M $ and $\G$, generally not matching at $\G$.

We unify the presentation by writing, for $j=1,2$,
\begin{subequations}
  \begin{align}
    \p_t^2z_j^\pm=P_j^\pm  z_j^\pm&\quad \text{ on }M^\pm \times \R,\label{eq:un_1}\noeqref{eq:un_1} \\*
    \mathfrak{B}_{j,\nu}^+ z_j^-=\mathfrak{B}_{j,\nu}^- z^+_j &\quad \text{ on }\G\times \R,\label{eq:un_2}\noeqref{eq:un_2} \\*
    z_j^+=z_j^-&\quad \text{ on }\G\times \R,\label{eq:un_3}\noeqref{eq:un_3} \\*
    \begin{split}
    \mathfrak{B}_{1,\nu}^+ z_1^+=0\text{ or }z_2^+=0&\quad \text{ on }\p M\times \R,\\*
     \text{ corresponding to }&j=1,2 \text{ in }\eqref{eq:un_1}\text{-}\eqref{eq:un_3},
    \end{split}\label{eq:un_4}
  \end{align}
\end{subequations}
where 
\begin{align*}
   z_1^\pm=&u_1^\pm, &z_2^\pm=&p^\pm_2, & P_1^\pm=&(\r_1^\pm)^{-1}E_1^\pm,\\
    P_2^\pm=&\l_2^\pm\div((\r_2^\pm)^{-1}\n\cdot ), & \mathfrak{B}_{1,\nu}^\pm=&N^\pm, & \mathfrak{B}_{2,\nu}^\pm=&(\r_2^\pm)^{-1}\p_\nu.
\end{align*}
Note that $P_j^\pm$ is an elliptic operator for $j=1,2$ (matrix valued for $j=1$).

We view $P_{j,0}=\begin{pmatrix}
  P_j^+& 0\\
  0&P_j^-
\end{pmatrix}$ as an unbounded operator on $$L^2(M^+,d\mu_j^+;\C^{m(j)})\times L^2(M^-,d\mu_j^-;\C^{m(j)}),$$ where  
$$d\mu_1^\pm=\r_1^\pm dv_g,\quad  d\mu_2^\pm=(\l_2^\pm)^{-1} dv_g, \quad  m(1)=3,\text{ and }m(2)=1,$$
 with domain
\begin{equation}
\begin{aligned}
  D({P_{j,0}})=\big\{(z_j^+,z_j^+)\in C^\infty(\oM^+;\C^{m(j)})\times C^\infty(\oM^-;\C^{m(j)}): \; z_j^+=z_j^- \text{ and } \mathfrak{B}_{j,\nu}^+ z_j^-=\mathfrak{B}_{j,\nu}^- z^+_j\text{ on }\G,&\\
   \mathfrak{B}_{1,\nu}^+ z_1^+=0 \text{ or } z_2^+=0\text{ on } \p M \text { corresponding to }j=1 \text{ or }j=2\big\}.&
  \end{aligned}
\end{equation}
 By the transmission and boundary conditions, $-P_{j,0}$ is symmetric and semibounded below on its domain, hence $P_{j,0}$ admits a self-adjoint extension $P_j$ with domain $D(P_j)$.
 As before, to construct the domain first complete $D({P_{j,0}})$ in the squared norm 
 \begin{equation}
 \|(z_j^+,z_j^-)\|_{q_j}^2=\|z_j^+\|_{q_j^+}^2+\|z^+_j\|_{L^2(M^+ ,d\mu^+_j)}^2+ \|z_j^- \|_{q_j^-}^2+\|z^-_j\|_{L^2(M^- ,d\mu_j^-)}^2, 
 \end{equation}
where $\|\cdot \|_{q_j^\pm}$ are the seminorms induced on $C^\infty(\oM^\pm;\C^{m(j)})$ by the  quadratic forms
 \begin{subequations}
\begin{align}
  &q_1^\pm({z}_1^\pm,{w}_1^\pm)=
  (\Div z_1^\pm,\Div w_1^\pm)_{L^2(M^\pm,\l_1^\pm dv_g)}
  +(d^{\rm s} z_1^\pm,d^{\rm s}w_1^\pm)_{L^2(M^\pm,2\mu_1^\pm dv_g)},\\
  &q_2^\pm(z_2^\pm,w_2^\pm)=(\n z^\pm_2,\n w^\pm_2)_{L^2(M^\pm,\r_2^\pm dv_g)}.
\end{align}
\end{subequations}

\smallskip
\begin{lemma}\label{lm:dom_q_j}
Denote the completion of $D({P_{j,0}})$ in $\|\cdot \|_{q_j}$ by $D(q_j)$, $j=1,2$.
We have 
\begin{subequations}
\begin{align}
  &\qquad D(q_1)=\calH^1_{1,\rm{tr}}:=\{(z^+,z^-)\in H^1(M^+;\C^3)\times H^1(M^-;\C^3):\t (z_1^+)=\t(z^-_1)\}\label{eq:dom_q_1} \text{ and }\\
    &\qquad D(q_2)=\calH^1_{2,\rm{tr}}:=\{(z^+,z^-)\in H^1(M^+)\times H^1(M^-):\t (z_2^+)=\t(z^-_2)\text{ and }\t'(z^+_2)=0\},
  \label{eq:dom_q_2}
\end{align}
\end{subequations}
where $\t$, $\t'$ are the traces at $\G$ and $\p M$ respectively.\footnote{Strictly speaking, for each $j=1,2$ we have two trace operators corresponding to $\G$, with different domains, mapping $C^\infty(M^\pm;\C^{m(j)})\to C^\infty(\G;\C^{m(j)}) $ and extending continuously $H^1(M^\pm;\C^{m(j)})\to H^{1/2}(\G;\C^{m(j)}) $.
However, we will not differentiate between them in the notation and it will be clear from the argument which one is used.}
The subscript ``$\rm{tr}$'' stands for transmission.
\end{lemma}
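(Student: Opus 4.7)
My plan is to follow the same two-step strategy used for Lemma~\ref{lm:h_1}: first I would establish that $\|\cdot\|_{q_j}$ is equivalent to the standard Sobolev norm on $H^1(M^+;\C^{m(j)}) \times H^1(M^-;\C^{m(j)})$, and then show that $D(P_{j,0})$ is dense in $\calH^1_{j,\rm{tr}}$ with respect to this norm. For $j=2$ the norm equivalence is immediate, since $\|z^\pm\|^2_{q_2^\pm} + \|z^\pm\|^2_{L^2(M^\pm,d\mu_2^\pm)}$ coincides, up to the smooth positive weights $\r_2^\pm$ and $\l_2^\pm$, with the standard $H^1(M^\pm)$ norm. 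For $j=1$ I would invoke Korn's inequality on each $M^\pm$, namely $\|u\|^2_{H^1(M^\pm)} \leq C(\|d^{\rm s}u\|^2_{L^2(M^\pm)} + \|u\|^2_{L^2(M^\pm)})$, which combined with the trivial opposite inequality gives equivalence.

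Given this equivalence, the inclusion $D(q_j) \subset \calH^1_{j,\rm{tr}}$ is the easy direction: a Cauchy sequence in $(D(P_{j,0}),\|\cdot\|_{q_j})$ is also Cauchy in the $H^1$-product space, hence converges to some $(z^+,z^-)$ there, and continuity of the trace operators at $\G$ (and at $\p M$ when $j=2$) lets the trace conditions defining $\calH^1_{j,\rm{tr}}$ pass to the limit.

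The main work lies in the reverse inclusion. Given $(z^+,z^-)\in \calH^1_{j,\rm{tr}}$, the matching trace condition at $\G$ glues the pair into a single element of $H^1(M;\C^{m(j)})$ (and, for $j=2$, into $H^1_0(M)$ thanks to $\t'(z^+)=0$). Standard density of $C^\infty(\oM)$ in $H^1(M)$ (respectively of $C_c^\infty(M)$ in $H^1_0(M)$ when $j=2$) then supplies smooth approximants $(w^+_n,w^-_n)\to (z^+,z^-)$ in $H^1$, automatically with matching $\G$-traces (and $w_n^+=0$ on $\p M$ when $j=2$). These approximants generally fail the Neumann-type transmission condition $\mathfrak{B}_{j,\nu}^+ w_n^- = \mathfrak{B}_{j,\nu}^- w_n^+$ on $\G$ and, in the case $j=1$, the boundary condition $\mathfrak{B}_{1,\nu}^+w_n^+=0$ on $\p M$. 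I would repair them with thin boundary-layer corrections. In boundary normal coordinates $(x',x_3)$ near $\G$, a term of the form $r_{n,\epsilon}(x',x_3)=\chi(x_3/\epsilon)\,x_3\,\phi_n(x')$, with $\chi$ a cutoff equal to one near zero and $\phi_n$ a smooth function on $\G$ tailored to the discrepancy, vanishes on $\G$ (preserving the matched trace), has conormal derivative on $\G$ equal to $\phi_n$, and satisfies $\|r_{n,\epsilon}\|_{H^1}=O(\epsilon^{1/2}\|\phi_n\|_{L^2(\G)})$. Adjusting $\phi_n$ to kill the discrepancy exactly, running an analogous correction near $\p M^+$ in the $j=1$ case, and letting $\epsilon=\epsilon_n\to 0$ quickly enough produces the required approximants in $D(P_{j,0})$.

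The main obstacle is the boundary-layer estimate together with the bookkeeping required to satisfy all the Neumann-type conditions simultaneously. One must verify that $\phi_n$ can be chosen so that $\mathfrak{B}_{j,\nu}^+r_{n,\epsilon}$ on $\G$ equals the discrepancy (a direct computation using $\nu=-\p_{x_3}$ and the explicit form of $\mathfrak{B}_{j,\nu}^\pm$), check that the change-of-variables bound $\int_0^\epsilon |\p_{x_3}(\chi(x_3/\epsilon)x_3\phi_n)|^2\,dx_3=O(\epsilon)\|\phi_n\|^2_{L^2(\G)}$ indeed gives the claimed $H^1$ estimate, and confirm that the corrections near $\G$ and $\p M^+$ do not interfere (immediate from $\G\cap\p M=\emptyset$). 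In the solid-solid case the tensor-valued nature of $N^\pm$ forces running the argument componentwise in the boundary normal frame but introduces no essential difficulty, paralleling the proof of Lemma~\ref{lm:h_1} given in Appendix~\ref{appendix_b}.
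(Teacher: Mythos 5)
Your proposal follows essentially the same strategy as the paper's proof: norm equivalence via Korn's inequality (trivial for $j=2$), the trace theorem for the easy inclusion, gluing the pair into an element of $H^1(M;\C^{m(j)})$ (or $H^1_0(M)$ when $j=2$) via the matched traces, density of smooth functions, and then a thin-collar correction supported near $\G$ (and $\p M$ for $j=1$) with small $H^1$ norm to repair the Neumann-type transmission and boundary conditions. The paper states the existence of the correction $\td X_j^+$ and its smallness by support-shrinking without detailing the $O(\epsilon^{1/2})$ estimate; your explicit boundary-layer construction $\chi(x_3/\epsilon)\,x_3\,\phi_n(x')$ is simply a concrete realization of the same step.
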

\begin{proof} The fact that $D(q_j)\subset H^1(M^+;\C^{m(j)})\times H^1(M^-;\C^{m(j)})$ follows from the equivalence of $\|\mathbf{z}_j \|_{q_j}^2$ with the squared norm $\|z^+_j\|_{H^1(M^+)}^2+\|z^-_j\|_{H^1(M^-)}^2$, where we wrote $\bz_j=(z^+_j,z^-_j)$ (in the case $j=1$ the equivalence of norms follows from Korn's inequality).
The transmission/boundary conditions in \eqref{eq:dom_q_1}-\eqref{eq:dom_q_2} hold by the trace theorem, since they do so for elements of $D(P_{j,0})$.

For the other inclusion, suppose that $\bz_j=(z_j^+,z_j^-)\in\calH_{j,\rm{tr}}^1$ is given and we seek an element in $D(P_{j,0})$ close to it.
The transmission condition at $\G$ guarantees that upon defining ${z}_j=\begin{cases}
  z_j^+\text { on }M^+\\
  z_j^-\text { on }M^-
\end{cases}$, we have $z_1\in H^1(M;\C^3)$ and $z_2\in H_0^1(M)$. Thus given $\e>0$ we can find $X_1\in C^\infty(\oM;\C^3)$, $X_2\in C_c^\infty(M)$ such that $\|z_j-X_j\|_{H^1(M)}\leq \e$. Setting
$\mathbf{X}_j=(X_j \big|_{M^+},X_j \big|_{M^-})$,
\begin{equation}
  \| \bz-\mathbf{X}_j\|_{q_j}\leq C\left(\|z^+_j-X_j \big|_{M^+}\|_{H^1(M^+)}+\|z^-_j-X_j \big|_{M^-}\|_{H^1(M^-)}\right)\leq  C\|z_j-X_j\|_{H^1(M)}\leq C\e.
\end{equation}

Finally, since $\mathbf{X}_j$ does not generally satisfy the requisite Neumann type transmission conditions, 
adjust it by finding
$\td{X}_j^+\in C^\infty(\oM^+)$ with $\td{X}_j^+\big|_{\p M^+}=0$, $\mathfrak{B}_{j,\nu}(X_j \big|_{M^+}+\td{X}_j^+)=\mathfrak{B}_{j,\nu}(X_j \big|_{ M^-})$ on $\G$ and $\mathfrak{B}_{1,\nu}(X_1 \big|_{M^+}+\td{X}_1^+)=0$ on $\p M$ if $j=1$.
By shrinking its support it can be arranged that $\|\td{X}_j^+\|_{H^1(M^+)}<\e$, implying that $\|\bz_j-(\mathbf{X}_j+(\td{X}_j^+,0))\|_{q_j}\leq C \e$ with $\mathbf{X}_j+(\td{X}_j^+,0)\in D(P_{j,0})$ and thus showing the claim.
\end{proof}

We now have:

\begin{proposition}\label{prop:estimate_ff}
For $j=1,2$, if $\mathbf{z}_j=(z^+_j,z^-_j)\in D(P_j)$ with $P_j^\pm u^\pm \in H^k(M^\pm)$ for $k=0,1,2\dots $ then we have
\begin{equation}\label{eq:estimate_ss_ff}
\begin{aligned}
  &\|z_j^+\|^2_{H^{k+2}({M}^+)}+\| z_j^-\|^2_{H^{k+2}({M}^-)} \\
  &\qquad \leq C\big(\|P_j^+z_j^+\|_{H^{k}(M^+)}^2+\|P_j^-z_j^-\|_{H^{k}(M^-)}^2 +\| z_j^+\|_{H^1(M^+)}^2+\|  z_j^-\|_{H^1(M^-)}^2   \big).
\end{aligned}
\end{equation}
If $\mathbf{z}_j=(z_j^+,z_j^-)\in D(P_j^n)$, $n\geq 1$ then $z_j^\pm\in H^{2n}(M^\pm,\C^{m(j)})$.
\end{proposition}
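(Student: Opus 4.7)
The plan is to adapt the argument used to prove Proposition~\ref{prop:estimate} and Corollary~\ref{cor:estimate}, but with significant simplifications coming from the ellipticity of both $P_j^+$ and $P_j^-$. The proof has three standard components: interior regularity, boundary regularity near $\p M$, and interface regularity near $\G$; the last of these is the only nontrivial step. Then we iterate to get arbitrary $k$.

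First I would dispatch the easy parts. Away from $\G\cup \p M$, standard interior elliptic regularity for $P_j^\pm$ gives local estimates controlling $\|z_j^\pm\|_{H^{k+2}}$ by $\|P_j^\pm z_j^\pm\|_{H^k}$ and $\|z_j^\pm\|_{L^2}$ on any compactly contained subdomain. Near $\p M$, one has a purely one-sided boundary value problem with either the traction-free condition $\mathfrak B_{1,\nu}^+ z_1^+=0$ (the natural coercive Neumann-type boundary condition for the Lam\'e system, known to satisfy the Shapiro--Lopatinski\u{\i} condition) or the Dirichlet condition $z_2^+=0$. In either case, standard boundary elliptic regularity applies and produces the estimate on a tubular neighborhood of $\p M$.

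The main step is regularity up to $\G$. I would work in a tubular neighborhood of $\G$ using semigeodesic coordinates $(x',x_3)$ so that $\G=\{x_3=0\}$, and double up variables, viewing $(z_j^+,z_j^-)$ as a single $\C^{2m(j)}$-valued function on one side of $\G$. Then \eqref{eq:un_2}--\eqref{eq:un_3} become boundary conditions at $\{x_3=0\}$ for a diagonal $2m(j)\times 2m(j)$ elliptic system, and the question reduces to verifying the Shapiro--Lopatinski\u{\i} condition for this transmission problem. For $j=2$ this is essentially the transmission problem for the Laplacian with piecewise smooth coefficients, which is classically elliptic. For $j=1$ the Lopatinski\u{\i} condition amounts to invertibility of the $6\times 6$ matrix obtained by matching the three bounded-decaying normal ODE solutions on each side via the displacement and traction conditions, which is essentially the ellipticity computation already performed in the elastic analogue of Section~\ref{ssec:hh} in the elliptic--elliptic regime. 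Once ellipticity of the transmission problem is established, the argument of \cite[Ch.~4]{McLean-book} applies: tangential difference quotients together with the coercivity encoded in $q_j$ yield $H^1$ bounds on tangential derivatives, and then the equation $P_j^\pm z_j^\pm=f_j^\pm$ is solved algebraically for $\p_{x_3}^2 z_j^\pm$ (using ellipticity of $P_j^\pm$, whose normal-normal principal symbol is invertible), giving full $H^2$ control of $z_j^\pm$ near $\G$. This proves the $k=0$ case of \eqref{eq:estimate_ss_ff}.

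To upgrade to arbitrary $k$, I would proceed by induction exactly as in Corollary~\ref{cor:estimate}: differentiate tangentially, apply the estimate at level $k-1$ to the differentiated system, then recover additional normal regularity algebraically from the equation. The second statement, that $\mathbf{z}_j\in D(P_j^n)$ forces $z_j^\pm\in H^{2n}(M^\pm;\C^{m(j)})$, follows by applying \eqref{eq:estimate_ss_ff} iteratively with $P_j\mathbf{z}_j\in D(P_j^{n-1})$ playing the role of the right-hand side. The main obstacle is the Shapiro--Lopatinski\u{\i} verification for the Lam\'e transmission problem ($j=1$); however, the required algebra is precisely the invertibility of the principal transmission system in the elliptic--elliptic regime, which is a minor modification of the analogous computation \eqref{system_ell_ell} in the solid-fluid setting and can be found (or rederived) in \cite{stefanov2020transmission}.
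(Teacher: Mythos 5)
Your overall architecture is sound, and the second statement (bootstrapping $D(P_j^n)$ to $H^{2n}$) matches the paper's argument. However, the paper's actual proof is substantially shorter than your sketch, and your main step contains a genuine flaw worth flagging.

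The paper's proof is two lines of content: from $\bz_j\in D(P_j)$ one reads off the Dirichlet transmission/boundary conditions from $D(q_j)$ and the Neumann transmission/boundary conditions from the integration-by-parts identity \eqref{eq:ss_ff_ibp}; then one cites \cite[Theorems~4.18 and 4.20]{McLean-book} directly. Those theorems are stated for strongly elliptic (i.e.\ coercive) variational problems and already cover the transmission setting -- the difference-quotient / Nirenberg argument there runs entirely off the coercivity of $q_j$ and never requires a Shapiro--Lopatinski\u{\i} verification. Your proposal does both: it asserts that one should first check Shapiro--Lopatinski\u{\i} for the ``doubled'' $2m(j)\times 2m(j)$ system, and then separately invokes ``the coercivity encoded in $q_j$'' for the tangential estimates. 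The second ingredient makes the first redundant, and the paper exploits this to skip the algebraic verification entirely.

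More seriously, the specific route you sketch for the Shapiro--Lopatinski\u{\i} check for $j=1$ is incorrect. The matrix in \eqref{system_ell_ell} is the boundary symbol of the \emph{time-dependent} elasticity-acoustics system in the elliptic--elliptic region, and its determinant carries the time-frequency $\t$ as a parameter. The spatial operator $P_1^\pm$ corresponds to the limit $\t=0$, and at $\t=0$ the normal characteristic roots of $E$ degenerate: the characteristic polynomial of the Lam\'e symbol in $\xi_3$ collapses to $\mu_{\rm s}^2(2\mu_{\rm s}+\l_{\rm s})(\xi_3^2+|\xi'|^2)^3$, so $\xi_3=\pm i|\xi'|$ appears with multiplicity three, and the basis of decaying solutions must include generalized (polynomially growing) modes. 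Consistent with this, the secular determinant in \eqref{eq:secular_equation} in fact vanishes at $z=\t^2/|\xi'|^2=0$. Thus the matrix you would write down by naively extending \eqref{system_ell_ell} to $\t=0$ is singular for the wrong reason -- because the basis is degenerate, not because the problem fails ellipticity -- and the ``minor modification'' argument breaks. Since your proof also has the coercive/variational machinery available, the clean fix is simply to delete the Shapiro--Lopatinski\u{\i} paragraph and invoke \cite[Theorems~4.18, 4.20]{McLean-book} directly, as the paper does.
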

\begin{proof}
The operators $P^\pm_j$ are all elliptic and coercive on $H^1$, with coefficients smooth down to the interface $\G$ and the boundary $\p M$.
Now suppose that $\bz_j\in D(P_j)$, $j=1,2$, implying  that $\t(z^+_j)-\t(z^-_j)=0$ and $\t'(z^+_2)=0$ if $j=2$.
Moreover, 
the
integration by parts property
\begin{equation}\label{eq:ss_ff_ibp}
  \sum _{\bullet =\pm}\big( ( P^\bullet_j z^\bullet_j,w_j^\bullet)_{L^2(M^\bullet,d\mu_j^\bullet)}+q^\bullet_j(z^\bullet_j,w^\bullet_j)\big)=0, \quad (z_j^+,z_j^-)\in D(P_j),\quad  (w_j^+,w_j^-)\in D(q_j)
\end{equation}
implies $\mathfrak{B}_{j,\nu}^+(z_j^+)-\mathfrak{B}_{j,\nu}^-(z_j^-)=0$ and $\mathfrak{B}_{1,\nu}^+(z^+)=0$  (those quantities are a priori defined weakly as elements of  $ H^{-1/2}(\G;\C^{m(j)})$ and $H^{-1/2}(\p M;\C^3)$ respectively, see e.g. \cite[Lemma 4.3]{McLean-book}).
Hence \eqref{eq:estimate_ss_ff} follows from Theorems 4.18 and 4.20 of \cite{McLean-book}.

For the second statement, if $\bz=(z_j^+,z_j^-)\in D(P_j^n)$, $n\geq 2$ then \eqref{eq:estimate_ss_ff} for $k=0$ implies that $(P^\pm_j)^{n-1} z_j^\pm\in H^2(M^\pm;\C^{m(j)}) $.
Then, using \eqref{eq:estimate_ss_ff} for $k=2$  and $z^\pm_j$ replaced by $(P^\pm_j)^{n-2} z_j^\pm$ we find that $(P^\pm_j)^{n-2} z_j^\pm\in H^4(M^\pm;\C^{m(j)}) $. 
Proceeding inductively for $n-1$ steps, we find that $(z_j^+,z_j^-)\in D(P_j^n)$  implies $P_j^\pm z_j^\pm\in H^{2n-2}(M^\pm;\C^{m(j)})$.
Then the claim follows from \eqref{eq:estimate_ss_ff} again applied for $k=2n$.
\end{proof}

As a corollary we obtain the following:
\begin{corollary}\label{cor:dom_P_j}
  The domain of the self-adjoint operator $P_j$ for $j=1,2 $ is given by 
  \begin{subequations}
  \begin{align}
  \begin{split}
    D(P_1)=\{(z_1^+,z_1^-)\in H^2(M^+;\C^3)\times H^2(M^-;\C^3): \t(z_1^+)=\t(z_1^-)\text{ on }\G,\quad\\
     \mathfrak{B}_{1,\nu}^+(z_1^+)=\mathfrak{B}_{1,\nu}^- (z_1^-) \text{ on }\G\text { and }  \mathfrak{B}_{1,\nu}^+(z_1^+)=0 \text{ on }\p M \} \quad \text{ and }
     \end{split}\label{eq:dom_P_1}  
     \\
     \begin{split}
     D(P_2)=\{(z_2^+,z_2^-)\in H^2(M^+)\times H^2(M^-): \t(z_2^+)=\t(z_2^-)\text{ on }\G,\quad\\
     \mathfrak{B}_{2,\nu}^+(z_2^+)=\mathfrak{B}_{2,\nu}^- (z_2^-) \text{ on }\G   \text{ and } \t'(z_2^+)=0 \text { on }\p M\}.
     \end{split}\label{eq:dom_P_2}
  \end{align}
  \end{subequations}
\end{corollary}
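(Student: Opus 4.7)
The plan is to prove the two inclusions defining $D(P_j)$, using Proposition~\ref{prop:estimate_ff} for one direction and the Friedrichs characterization of $D(P_j)$ for the other.

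First I would show that $D(P_j)$ is contained in the space on the right-hand side of \eqref{eq:dom_P_1} or \eqref{eq:dom_P_2}. For $\bz_j=(z_j^+,z_j^-)\in D(P_j)$, one has $P_j\bz_j\in L^2$ by definition, so Proposition~\ref{prop:estimate_ff} with $k=0$ immediately yields $z_j^\pm \in H^2(M^\pm;\C^{m(j)})$. The transmission condition $\t(z_j^+)=\t(z_j^-)$ on $\G$ and, in the case $j=2$, the Dirichlet condition $\t'(z_2^+)=0$ on $\p M$, follow from the inclusion $D(P_j)\subset D(q_j)=\calH^1_{j,\rm{tr}}$ provided by Lemma~\ref{lm:dom_q_j}. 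The remaining Neumann-type transmission condition $\mathfrak{B}_{j,\nu}^+(z_j^+)=\mathfrak{B}_{j,\nu}^-(z_j^-)$ on $\G$ (and $\mathfrak{B}_{1,\nu}^+(z_1^+)=0$ on $\p M$ when $j=1$) is obtained from the self-adjointness identity $(P_j\bz_j,\bw)_{L^2}=(\bz_j,P_j\bw)_{L^2}$ for all $\bw\in D(P_{j,0})$: integrating by parts on each $M^\pm$ and using that the identity must hold for an arbitrary choice of $\bw$ with prescribed traces on $\G$ and $\p M$, the residual boundary integrals must vanish, which forces the stated Neumann conditions as identities in $H^{-1/2}$.

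For the reverse inclusion, given $\bz_j$ in the space on the right-hand side of \eqref{eq:dom_P_1}--\eqref{eq:dom_P_2}, I verify the Friedrichs criterion recalled in Section~\ref{ssec:domain}, namely that $\bu\mapsto q_j(\bu,\bz_j)$ extends to a bounded linear functional on $\calH^0$ for $\bu\in D(q_j)$. Integration by parts on each component gives
\begin{equation}
q_j(\bu,\bz_j)=\sum_{\bullet=\pm}\Bigl[-(u_j^\bullet,P_j^\bullet z_j^\bullet)_{L^2(M^\bullet,d\mu_j^\bullet)}+\text{boundary terms on }\p M^\bullet\Bigr].
\end{equation}
The contributions on $\G$ combine via the matching: the continuity of $\t(u_j^+)=\t(u_j^-)$ (enforced because $\bu\in D(q_j)$) together with the matching $\mathfrak{B}_{j,\nu}^+(z_j^+)=\mathfrak{B}_{j,\nu}^-(z_j^-)$ (enforced because $\bz_j$ lies in the claimed set) makes the two $\G$-integrals cancel. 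The contribution on $\p M$ vanishes because either $\mathfrak{B}_{1,\nu}^+(z_1^+)=0$ (case $j=1$) or $\t'(u_2^+)=0$ (case $j=2$, again from membership in $D(q_j)$). What remains is $q_j(\bu,\bz_j)=-(\bu,P_j\bz_j)_{L^2}$, which by Cauchy--Schwarz is bounded by $\|P_j\bz_j\|_{L^2}\|\bu\|_{L^2}$. Hence $\bz_j\in D(P_j)$, and the identity of $P_j\bz_j$ computed in the strong sense with its distributional value then follows by reapplying integration by parts.

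The main technical point is the justification of the boundary integrations by parts. Since $\bz_j$ has $H^2$-regularity on each side up to $\G$ and $\p M$, the traces of $z_j^\pm$ lie in $H^{3/2}$ and those of $\mathfrak{B}_{j,\nu}^\pm z_j^\pm$ lie in $H^{1/2}$, so the pairings with $H^{1/2}$ traces of $\bu\in D(q_j)$ are classical. The second statement of the corollary, on $D(P_j^n)$, already has been obtained inside the proof of Proposition~\ref{prop:estimate_ff} by iterating the elliptic estimate. Thus no new ingredient beyond Lemma~\ref{lm:dom_q_j} and Proposition~\ref{prop:estimate_ff} is needed.
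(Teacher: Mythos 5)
Your proposal is correct and follows essentially the same path as the paper: both directions use Proposition~\ref{prop:estimate_ff} for $H^2$ regularity, the inclusion $D(P_j)\subset D(q_j)=\calH^1_{j,\rm{tr}}$ from Lemma~\ref{lm:dom_q_j} for the Dirichlet conditions, integration by parts (cf.\ \eqref{eq:ss_ff_ibp}) to force the Neumann conditions, and the Friedrichs criterion \eqref{eq:domain_general_P} to get the reverse inclusion. The only cosmetic difference is that you test the self-adjointness identity against $\bw\in D(P_{j,0})$ while the paper invokes \eqref{eq:ss_ff_ibp} directly for $\bw\in D(q_j)$; these amount to the same argument.
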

\begin{proof}
The regularity of elements in $D(P_j)$ follows from Proposition \ref{prop:estimate_ff}.
The transmission/boundary conditions follow from the inclusion $D(P_j)\subset \calH_{j,\rm{tr}}^1$ and  the integration by parts property \eqref{eq:ss_ff_ibp}.

Conversely, any element $(z_j^+,z_j^-)$ in the right hand side of \eqref{eq:dom_P_1}-\eqref{eq:dom_P_2} lies in $\calH_{j,\rm{tr}}^1$ and satisfies  \eqref{eq:ss_ff_ibp} for $(w_j^+,w_j^-)\in \calH_{j,\rm{tr}}^1$, implying that 
\begin{equation}
    |q^+_j(z^+_j,w^+_j)+q^-_j(z^-_j,w^-_j)|\leq C\Big(\|w^+_j\|_{L^2(M^+,d\mu_j^+)}^2+\|w^-_j\|_{L^2(M^-,d\mu_j^-)}^2\Big)^{1/2},
\end{equation}
thus $(z_j^+,z_j^-)\in D(P_j)$.
\end{proof}

Finally, the Sobolev Embedding Theorem and Proposition \ref{prop:estimate_ff}  yield the following:
\begin{corollary}\label{cor:P^n}
  For $j=1,2 $, if $\bz_j=(z_j^+,z_j^-)\in D(P_j^n) $ for all $n\geq 1$, then $z_j^\pm\in C^\infty(\oM^\pm;\C^{m(j)})$.
\end{corollary}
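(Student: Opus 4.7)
The plan is a direct bootstrap from the regularity estimate of Proposition~\ref{prop:estimate_ff} combined with Sobolev embedding. First I would invoke the second statement of Proposition~\ref{prop:estimate_ff}, which tells us that for each fixed $n\geq 1$, membership $\bz_j\in D(P_j^n)$ implies $z_j^\pm\in H^{2n}(M^\pm;\C^{m(j)})$. Since by hypothesis $\bz_j\in D(P_j^n)$ for \emph{every} $n\geq 1$, this upgrades to $z_j^\pm\in\bigcap_{n\geq 1}H^{2n}(M^\pm;\C^{m(j)})$.

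Next I would apply the Sobolev embedding theorem on the compact manifolds with boundary $\oM^\pm$ (which have dimension $3$): for any nonnegative integer $k$, one has the continuous inclusion $H^{2n}(M^\pm;\C^{m(j)})\hookrightarrow C^{k}(\oM^\pm;\C^{m(j)})$ as soon as $2n>k+3/2$. Given arbitrary $k$, we may simply choose $n$ large enough to satisfy this, and conclude $z_j^\pm\in C^k(\oM^\pm;\C^{m(j)})$. Since $k$ is arbitrary, this gives $z_j^\pm\in C^\infty(\oM^\pm;\C^{m(j)})$, which is the desired conclusion.

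There is essentially no obstacle: the heavy lifting has already been done in Proposition~\ref{prop:estimate_ff}, which established elliptic-type regularity up to the interface $\G$ and to $\p M$ (this is the nontrivial step, carried out in the appendix using the transmission-problem estimates of \cite{McLean-book}). The only minor point to mention is that the Sobolev embedding applies up to the boundary precisely because $\oM^\pm$ are compact smooth manifolds with smooth boundary (given by $\G$ for $M^-$, and by $\G\cup\p M$ for $M^+$), so no extension or localization argument is needed.
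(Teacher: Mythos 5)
Your proposal is correct and matches the paper's approach exactly: the paper presents this corollary as an immediate consequence of Proposition~\ref{prop:estimate_ff} (which gives $z_j^\pm\in H^{2n}(M^\pm;\C^{m(j)})$ when $\bz_j\in D(P_j^n)$) together with Sobolev embedding on the compact manifolds-with-boundary $\oM^\pm$, and gives no further proof. You have simply spelled out the routine details.
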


\subsection{Parametrix Justification}
\label{sssec:just_ff_ss}

Using the techniques described in Sections \ref{sec:acoustic} and \ref{sec:elastic} one can construct parametrices for (\ref{eq:f-f1}-\ref{eq:f-f4}) and (\ref{eq:s-s1}-\ref{eq:s-s4}) respectively (see \cite{stefanov2020transmission}).
With the combined presentation used in \eqref{eq:un_1}-\eqref{eq:un_4}, the difference between an actual solution and a parametrix satisfies 
\begin{equation}\label{eq:nonhomog_system_ss_ff}
  \begin{cases}
    \p_t^2z_j^\pm-P_j^\pm  z_j^\pm=F_j^\pm&\quad \text{ on }M^\pm \times \R,\\
    \mathfrak{B}_{j,\nu}^+ z_j^+-\mathfrak{B}_{j,\nu}^- z^-_j=f_j &\quad \text{ on }\G\times \R,\\
    z_j^+-z_j^-=g_j&\quad \text{ on }\G\times \R, \\
    \mathfrak{B}_{1,\nu}^+ z_1^+=h_1&\quad \text{ on }\p M\times \R \text{ if } j=1,\\
    z_2^+=h_2&\quad \text{ on }\p M\times \R \text{ if } j=2,
     \\
     z_j^\pm=0 &\quad\text{ for }t\ll 0,
  \end{cases}
\end{equation}
where for $j=1,2$, $F_j^\pm\in C^\infty(\oM^\pm\times \R;\R^{m(j)})$, $f_j$, $g_j\in C^\infty(\G\times \R;\R^{m(j)})$, $h_j\in C^\infty(\p M\times \R;\R^{m(j)})$ and we recall the notation $m(1)=3$, $m(2)=1$.

As in Section \ref{ssec:justification}, by the Cauchy-Kovalevskaya method and Borel's lemma, our task reduces to showing smoothness up to the boundary/interface for the solutions $v^\pm_j$ of the system
\begin{equation}\label{eq:diff_ss_ff}
  \begin{cases}
    \p_t^2v_j^\pm-P_j^\pm  v_j^\pm=\td{F}_j^\pm&\quad \text{ on }M^\pm \times \R,\\*
    \mathfrak{B}_{j,\nu}^+ v_j^+-\mathfrak{B}_{j,\nu}^- v^-_j=0 &\quad \text{ on }\G\times \R,\\*
    v_j^+-v_j^-=0&\quad \text{ on }\G\times \R,\\*
    \mathfrak{B}_{1,\nu}^+ v_1^+=0&\quad \text{ on }\p M\times \R \text{ if }j=1,\\
    v_2^+=0 &\quad \text{ on }\p M\times \R \text{ if }j=2,\\
     v_j^\pm=0 &\quad\text{ for }t\ll 0,
  \end{cases}
\end{equation}
where $\td{F}_j^\pm$ are smooth and vanish to infinite order at  $\G$ and at $\p M$.
Therefore, by Corollary \eqref{cor:dom_P_j} we have that $\td{\mathbf{F}}_j(s)=(\td{F}_j^+(\cdot,s),\td{F}_j^-(\cdot,s))\in D(P^k)$ for all $k\geq 1$ and $s\in \R$.
Thus \eqref{eq:diff_ss_ff} can be solved using Duhamel's formula, namely
\begin{equation}
  \bv(t)=\big(v_j^+(\cdot,t),v_j^-(\cdot,t)\big)= \int _{-\infty}^t\frac{\sin\big(\sqrt{-P_j}(t-s)\big)}{\sqrt{-P_j}}\mathbf{F}_j(s)ds.
\end{equation}
The functional calculus  implies that $\bv\in C^\infty(\R; D(P_j^k))$ for all integers $k\geq 0$, so by Corollary \ref{cor:P^n} we obtain that $ v_j^\pm\in C^\infty(\oM\times\R;\C^{m(j)})$.

\section{Proofs for Section \ref{ssec:domain}}\label{appendix_b}

In this appendix we prove Lemma \ref{lm:h_1}, Proposition \ref{prop:estimate} and Corollary \ref{cor:estimate}. 
In what follows, given $\bu=(u^+,u^-)\in \calH_{\div,\rm{tr}}^1$ (see \eqref{eq:H1div}),  it will be convenient to have a procedure for producing a decomposition of ${u}^-$ into a divergence free and a potential part.
The potential part generally possesses higher regularity than $u^-$, as we see below.
Start by solving (up to a constant) 
\begin{equation}\label{eq:neumann_problem}
    \Delta \w = \div u^-\text { on }M^-,\quad \p_\nu \w =\t(u^+)\cdot \nu  \text{ on }\Gamma
 \end{equation}
 to find $\w\in H^2(M^-)$ such that $u^-=\n \w+z$, where $\div z=0$ and $z\cdot \nu \big|_\G=0$ (a priori defined in a weak sense).
An $\w$ solving \eqref{eq:neumann_problem} exists due to the transmission conditions, which guarantee that $\int_{M^-}\div u^- dv_g=-\int_\G \t(u^+)\cdot\nu dA$.
The regularity of $\w$ follows e.g. by \cite[Theorem 4.18 (ii)]{McLean-book}, since $\p_\nu \w=\t(u^+)\cdot \nu\in H^{1/2}(\G)$ and $\div u^-\in L^2(M^-)$.
Henceforth we set
\begin{equation}\label{eq:potential_part}
    \td{u}^-:=u^--z=\n \w\in H^1(M^-;\C\otimes TM).
 \end{equation}  

For the solenoidal (divergence free) part we can work somewhat more generally, and start with an element $u^-\in H_{\div}^1(M^-;\C\otimes TM)$ (so it need not be coupled with a vector field in $M^+$). 
We can now find $\phi\in H^1(M^-)$ satisfying $\Delta \phi=\div u^-\in L^2(M^-)$ on $M^-$, $\p_\nu \phi=\t( u^- \cdot   \nu)\in H^{-1/2}(\G)$ on $\G$. 
The compatibility condition for the  Neumann problem is automatically satisfied here by the divergence theorem.
Then we set 
\begin{equation}\label{eq:solenoidal}
    \Pi u^-= u^--\n \phi\in L^2(M^-,\C\otimes TM).
\end{equation}
Then $\div \Pi u ^-=0$ on $M^-$ and $\Pi u^-\cdot \nu =0$ on $\G$ (in a weak sense).
The advantage of defining $\Pi$ independently of the existence of a coupled vector field in $M^+$ is that it becomes  an orthogonal projector on the subspace of divergence free vector fields in $L^2(M^-,dv_g;\C\otimes TM)$; the reason we used the coupling in defining $\td{u}^-$ is the gain in regularity we were able to obtain as a result of it.
Given $\bu=(u^+,u^-)\in \calH_{\rm{div,tr}}^1$, the decomposition of $u^-$ can be written as $u=\td{u}^-+\Pi u^-$.

\begin{proof}[Proof of Lemma \ref{lm:h_1}]

For $\bu\in D(P_0)$ we have
\begin{equation}\label{q_norm}
    \|\bu \|_q^2=\|\Div(u^+)\|^2_{L^2(M^+,\l_{\rm s}dv_g)}+\|d^{\rm s}(u^+)\|^2_{L^2(M^+,2\mu_{\rm s} dv_g)}+\|\div u^-\|^2_{L^2(M^-,\l_{\rm f}dv_g)}+\| \bu\|^2_{L^2}.
\end{equation}
By Korn's inequality, the squared norm $\|\Div(u^+)\|^2_{L^2(M^+)}+\|d^{\rm s}(u^+)\|^2_{L^2(M^+)}+\|u^+\|_{L^2(M^+)}^2$ is equivalent to $\|u^+\|_{H^1(M^+)}^2$, therefore
 $D(q)\subset H^1(M^+;\C\otimes TM)\times H^1_{\div}(M^-;\C\otimes TM)$.
The continuous dependence of $\| \t(u^+)\cdot \nu\|_{H^{1/2}(\G)}$ and $\|\t(u^-\cdot \nu) \|_{H^{-1/2}(\G)}$ on $\|u^+\|_{H^1(M^+)} $ and $\|u^-\|_{H^1_{\div}(M^-)}$ respectively implies that the transmission condition in \eqref{eq:H1div} is satisfied and hence $D(q)\subset \calH_{\div,\rm{tr}}^1$ (so one also has $\t(u^-\cdot \nu) \in H^{1/2}(\G) $).

For the converse, assume $\bu=(u^+,u^-)\in \calH^1_{\div,\text{tr}}$ is given; we will show that we can find an element of $ D(P_0)$ arbitrarily close to it in $\|\cdot \|_q$.
Let $\td{u}^-\in H^1(M^-;\C\otimes TM)$ and $\Pi u^-\in L^2(M^-;\C\otimes TM)$ be as in \eqref{eq:potential_part} and \eqref{eq:solenoidal}.
Further, consider semigeodesic local coordinates $(x_1,x_2,x_3)$ in a neighborhood $U$ of a point in $\G$ such that $x_3=0 $ on $\G$, $\p_{x_3}\big|_{\G}=-\nu$ %
 and $\p_{x_3}\cdot \p_{x_j} \big|_{\G}=0$ for $j=1,2$, and write $u^+_j=dx_j (u^+)$, $\td{u}^-_j=dx_j(\td{u}^-)$. \footnote{Here we are not using the convention of writing upper indices for the components of a vector field.}
We deal with the potential and the divergence free part of $u^-$  separately.
To handle the former, for $j=1,2,3$ we will approximate $(u^+_j,\td{u}_j^-)$ in $H^1(M^+)\times H^1(M^-)$ by pairs of smooth functions; specifically for $j=3$ we will use the transmission condition $u^+_3 \big|_{\G}=\td{u}_3^-\big|_{\G}$ satisfied by $(u^+_3,\td{u}_3^-)$ to ensure that the  approximating pair satisfies it too.
For the latter, we will first approximate $\Pi u^-$ in $L^2$ and then only use the divergence free part of the approximating vector field to build the vector field approximating $u^-$.

Consider $\varphi\in C^\infty_c(U)$ and write $U^\pm:=U\cap M^\pm$.
Then $\varphi u^+_j\in H^1(U^+)$, $\varphi\td{u}^-_j\in H^1(U^-)$ for all $j$ and  they vanish on $\p U$; moreover, the function defined on $U$ as 
  $\begin{cases}
    \phi u^+_{3}\text{ on }U^+\\
    \phi\td{u}^-_{3} \text{ on }U^-
 \end{cases}$ lies in $ H^1_0 (U)$, by \cite[Exercise 4.5]{McLean-book}.
 Thus given $\e>0$ we can find functions $X_3\in C_c^\infty (U)$, $X^\pm_j\in C_c^\infty ({U}\cap \o{M}^\pm)$, $j=1,2$ such that 
 \begin{align}
    \mathbf{X}_U=&(X_U^+,X_U^-)=\big(\sum_{j=1}^2 X_j^+\p_{x_j}+X_3\p_{x_3},\sum_{j=1}^2 X_j^-\p_{x_j}+X_3\p_{x_3}\big)\\*
    &\in  C_c^\infty(U\cap \o{M}^+;\C\otimes T\o{M})\times C_c^\infty (U\cap \o{M}^-;\C\otimes T\o{M})\text{ with }X_U^+\cdot \nu \big|_{\Gamma}=X_U^-\cdot \nu \big|_{\Gamma}
 \end{align}
 and  $\|\varphi u^+-X^+_U\|^2_{H^1(U^+)}+\|\varphi\td{u}^--X_U^-\|^2_{H^1( U^-)}\leq \e
$.
In coordinate neighborhoods which do not intersect $\G$ we can construct smooth approximations to $(u^+,\td{u}^-)$ in a similar, though simpler, fashion.
Using a partition of unity, we find 
\begin{equation}
 \mathbf{X}=(X^+,X^-)\in    C^\infty(\o{M}^+;\C\otimes T\o{M})\times C^\infty (\o{M}^-;\C\otimes T\o{M})\text{ with }X^+\cdot \nu \big|_{\Gamma}=X^-\cdot \nu \big|_{\Gamma}    
 \end{equation}
  such that  
 \begin{equation}
    \left( \|{u}^+-X^+\|_{H^1(M^+)}^2+\|\td{u}^--X^-\|_{H^1(M^-)}^2\right)\leq \e.  
\end{equation}
To deal with the divergence free part of $u^-$, we find $Y^-\in C^\infty(\oM^-;\C\otimes TM)$ which satisfies $\|\Pi u^- -Y^-\|_{L^2(M^-)}^2\leq \e$.
Since $\Pi$ is an orthogonal projector, we have  
\begin{equation}
    \|\Pi u^- -\Pi Y^-\|_{L^2(M^-)}^2=\|\Pi(\Pi u^- -Y^-)\|_{L^2(M^-)}^2\leq \|\Pi u^- -Y^-\|_{L^2(M^-)}^2 \leq \e.
\end{equation}
Now set $\mathbf{X}_1=\mathbf{X}+(0,\Pi Y^-)$;  by  construction of $\Pi$ we have that $X^+\cdot \nu \big|_{\G}=(X^-+\Pi Y^-)\cdot \nu \big|_{\G}$. 
Now
\begin{equation}
    \begin{aligned}
    \|\bu-\mathbf{X}_1\|_q^2&\leq C\left( \|u^+-X^+\|_{H^1(M^+)}^2+\|\td{u}^-+\Pi u^--(X^-+\Pi Y^-)\|_{H^1_{\div}(M^-)}^2\right)\\
    &\leq C\left(\|{u}^+-X^+\|_{H^1(M^+)}^2+\|\td{u}^--X^-\|_{H^1_{\div}(M^-)}^2+\|\Pi u^--\Pi Y^-\|_{L^2(M^-)}^2\right)\\
    &\leq C\left(\|{u}^+-X^+\|_{H^1(M^+)}^2+\|\td{u}^--X^-\|_{H^1(M^-)}^2+\|\Pi u^--\Pi Y^-\|_{L^2(M^-)}^2\right)\leq C\e.\\
\end{aligned}
\end{equation}
The vector field $\mathbf{X}_1$ we constructed does not necessarily satisfy all of the requisite transmission and boundary conditions to lie in $D(P_0)$.
Hence we adjust $X^+$ by adding a vector field $\td{X}^+\in C^\infty(\oM^+;\C\otimes T\oM)$ satisfying $$\td{X}^+=0\; \text{ and }N(X^++\td{X}^+)=\l _{\rm f} (\div X^-)\nu  \text{ on }\G,\quad  N(X^++\td{X}^+)=0\text { on } \p M,$$ and supported in a sufficiently small neighborhood of $\p M^+$ to ensure that $\|\td{X}^+\|_{H^1(M^+)}^2\leq \e$.
We find that $\mathbf{X}_2=(X^++\td{X}^+,X^-+\Pi Y^- )\in D(P_0)$ and $\|\bu-\mathbf{X}_2\|_q^2\leq C \e$, as claimed. 
\end{proof}

The proofs for Proposition \ref{prop:estimate} and Corollary \ref{cor:estimate} below closely follow those of elliptic regularity estimates in \cite[Ch. 4]{McLean-book}, though the difficulty here is the lack of ellipticity of $P^-$.
We will use difference quotients:
for a function $w\in L^2(\R^n)$ let 
$$\delta_{\ell,h}w(x)=\frac{1}{h}\big(w(x+he_\ell)-w(x)\big),\quad  \ell=1,\dots, n,$$
 where $e_\ell$ is the $\ell$-th standard unit vector.
If $\p_{x_\ell} w\in L^2(\R^n)$, then by \cite[Lemma 4.13]{McLean-book}, $\| \delta_{\ell,h}w\|_{L^2(\R^n)}\leq C\| \p_{x_\ell} w\|_{L^2(\R^n)}$ for  $h\in \R$, and $\delta_{\ell,h}w\overset{h\to 0}{\to} \p_{x_\ell} w$ in $L^2$.
Moreover, the fact that $[\delta_{\ell,h},\p_{x_k}]=0$ and interpolation imply that for any $s\in R$, $\delta_{\ell,h}:H^{s+1}(\R^n)\to H^{\rm s}(\R^n)$ is bounded for all $h\in \R$, uniformly in $h$.

\begin{proof}[Proof of Proposition \ref{prop:estimate}]
We will first assume that $\Pi u^-=0$, i.e. that $u^-=\td{u}^-$ (see \eqref{eq:potential_part}-\eqref{eq:solenoidal}), and show that if $(u^+,u^-)=(u^+,\td{u}^-)\in D(P)$ then we have the estimate
\begin{equation}\label{eq:estimate_u_td}
  \begin{aligned}
  \|u^+&\|^2_{H^2({M}^+)}+\|\div u^-\|^2_{H^1({M}^-)}, \\
  &\leq C\left(\|P^+u^+\|_{L^2(M^+)}^2+\|P^-u^-\|_{L^2(M^-)}^2 +\| u^+\|_{H^1(M^-)}^2 +\| u^-\|_{H^1(M^+)}^2  \right), \quad \Pi u^-=0.
\end{aligned}
\end{equation}
Once \eqref{eq:estimate_u_td} has been established under the assumption $u^-=\td{u}^-$,
the statement of the proposition follows for general $\bu\in D(P)$; we now demonstrate how to see this. 
Let $u^-=\td{u}^-+\Pi u^-$ and write $\td{u}^-=\n \w$, where $\w$ is determined up to constant by \eqref{eq:neumann_problem} and it has been chosen so that  $\|\w\|_{H^1(M^-)}= \|\w+\C\|_{H^1(M^-)/\C}=\inf_{z\in \C}\| \w+z\|_{H^1(M^-)}$ (the precompactness of $M^-$ implies that the infimum is realized for some complex number).
Notice that if $(u^+,u^-)\in D(q)  $ then $(u^+,u^-)\in D(P)$ if and only if $(u^+,\td{u}^-)\in D(P)$ because $(0,\Pi u^-)\in D(P)$ by \eqref{eq:domain_general_P}.

For each $r\geq 0$, we have the elliptic regularity estimate
\begin{align}
  \|\td{u}^-\|_{H^{r+1}(M^-)}&=\|\n\w\|_{H^{r+1}(M^-)}\leq C\|\w\|_{H^{r+2}(M^-)}\\
  &\leq C\big( \|\Delta \w \|_{H^r(M^-)}+\|\w\|_{H^1(M^-)}+\|\p_\nu\w\|_{H^{r+1/2}(\Gamma)}\big)\\
  &\leq C\big( \|\div(\td{u}^-) \|_{H^r(M^-)}+\|\w\|_{H^1(M^-)}+\|u^+\cdot \nu\|_{H^{r+1/2}(\Gamma)}\big) \\
  &\leq C\left( \|\div({u}^-) \|_{H^r(M^-)}+\|u^+\|_{H^{r+1}(M^+)}\right) ,\label{eq:elliptic_regularity}
\end{align}
using the trace theorem and the fact that \cite[Theorem 4.10(ii)]{McLean-book} implies
\begin{equation}\label{eq:omega_estimate}
\|\w\|_{H^1(M^-)}=\|\w+\C\|_{H^1(M^-)/\C}\leq C\big(\|\div u^-\|_{L^2(M^-)}+\|\nu\cdot u^+\|_{H^{1/2}(\G)}\big). 
\end{equation}
So suppose that $(u^+,u^-)\in D(P)$ is given. Then $(u^+,\td{u}^-)\in D(P)$, so if \eqref{eq:estimate_u_td} is known to hold  with $u^-$ replaced by $\td{u}^-$, we obtain the original claim \eqref{eq:prop_estimate} using that
 $P^-\td{u}^-=P^-{u}^-$, $\div u^-=\div \td{u}^-$,
and
 \eqref{eq:elliptic_regularity} for $r=0$.

\medskip

So now assume that $\bu\in D(P)$ and $\Pi u^-=0$.
To prove \eqref{eq:estimate_u_td} we localize in neighborhoods where we can choose coordinates conveniently.
Assume that $U$  is a neighborhood of a point in $\G$ and semigeodesic coordinates are chosen on $U$ such that  $\G$ is given  locally by $x_3=0$ and such that $\nu=-\p_{x_3}\big|_{\G}$, and consider $\chi\in C_c^\infty(U)$.
With some abuse of notation we write $\chi u^\pm:=\chi \big|_{M^\pm}u^\pm$ and $\chi \bu:=(\chi u^+,\chi u^-)$. Note that if $\bu\in D(P)$ we have $\chi \bu\in \calH^1_{\div,\rm{tr}}$ but generally not $\chi \bu\in D(P)$.
(This is one of the reasons why we have to do the localization explicitly by multiplying by $\chi$ instead of assuming that $u\in D(P)$ and is supported in $U$; we would have some loss of generality with such an assumption.)
For $\ell=1,2$ we can form the difference quotient $\d_{\ell,h}(\chi\bu):=(\d_{\ell,h}(\chi u^+),\d_{\ell,h}(\chi u^-))$ (throughout this proof we assume that $|h|$ is small enough that $\supp \delta_{\ell,h}(\chi \bu)\subset\subset U$).
Again, in general $\bu\in D(P)\not\implies \delta_{h,\ell}(\chi \bu)\in D(P)$ due to the Neumann transmission condition \eqref{tr_2_aux},
but $\bu\in D(P)\subset\calH^1_{\div,\rm{tr}}\implies  \delta_{\ell,h}(\chi \bu)\in \calH^1_{\div,\rm{tr}}$.

For $v^\pm$, $w^\pm\in H^1({M}^\pm; \C\otimes TM)$, we set below
\begin{align}
q^+(v^+,w^+)=&(\Div  v^+ ,\Div w^+ )_{L^2(M^+,\l_{\rm s}dv_g)}+(d^{\rm s}  v^+ ,d^{\rm s} w^+ )_{L^2(M^+,2\mu_{\rm s} dv_g)},\\
 q^-(v^-,w^-)=&(\div  v^-,\div w^-)_{L^2(M^-,\l_{\rm f}dv_g)}.
\end{align}
 By \cite[Lemma 4.15]{McLean-book} we then have (assuming $v^\pm$, $u^\pm$ are supported in $U\cap \oM^\pm$)
\begin{align}\label{eq:difference_quot_dirichlet}
  \qquad |q^\pm(\delta_{\ell,h}v^\pm,w^\pm)-q^\pm(v^\pm,\delta_{\ell,-h}w^\pm)|
  \leq C\|v^\pm\|_{H^1(M^\pm)}\|w^\pm \|_{H^1(M^\pm)},\quad |h|\text{ small},\quad  \ell= 1,2.
\end{align}
Inequality \eqref{eq:estimate_u_td} will be proved by means of the following coerciveness type estimates, which follow from Korn's inequality and \eqref{eq:difference_quot_dirichlet}:
\begin{align}
  \|\delta_{\ell,h}&(\chi  u^+)\|^2_{H^1({M}^+)}+\|\div \delta_{\ell,h}(\chi u^-)\|_{L^2({M}^-)}^2\\
  &\leq C\big(|q^+(\delta_{\ell,h}(\chi  u^+),\delta_{\ell,h}(\chi  u^+))+q^-(\delta_{\ell,h}(\chi u^-),\delta_{\ell,h}(\chi u^-))| +\|\delta_{\ell,h}(\chi  u^+)\|^2_{L^2({M}^+)}\big)\\
  &\leq C\Big(|q^+(\chi  u^+,\delta_{\ell,-h}\delta_{\ell,h}(\chi  u^+))+q^-(\chi u^-,\delta_{\ell,-h}\delta_{\ell,h}(\chi u^-))|\\
  &\quad+\| \chi  u^+ \|_{H^1({M}^+)}\|\delta_{\ell,h}(\chi{u}^+)\|_{H^1({M}^+)}+\| \chi u^- \|_{H^1({M}^-)}\|\delta_{\ell,h}(\chi u^-)\|_{H^1({M}^-)} \\
  &\qquad+ \|\delta_{\ell,h}(\chi  u^+)\|^2_{L^2({M}^+)}\Big).\label{eq:coerciveness_1}
\end{align}
Eventually our goal is to let $h\to0$, thus turning the difference quotients into derivatives, once we manage to move all of the expressions involving  highest order derivatives and difference quotients of $u^\pm$ to the left hand side.
We will establish two claims that will allow us to further manipulate \eqref{eq:coerciveness_1}: The purpose of Claim \ref{cl_1} is to estimate $\|\d_{\ell,h}(\chi u^-)\|_{H^1(M^-)}$, which appears in the right hand side of \eqref{eq:coerciveness_1}, by $\|\div(\d_{\ell,h}(\chi u^-)) \|_{L^2(M^-)} + \|\d_{\ell,h}(\chi{u}^+)\|_{H^{1}(M^+)}$ (which appears in its left hand side) plus controlled quantities.
The purpose of Claim \ref{cl_2} is to show how integration by parts can be used to replace the quadratic form terms in \eqref{eq:coerciveness_1} by expressions involving $P^\pm u^\pm$.

\begin{claim}\label{cl_1}
  If $\Pi u^-=0$ and $\ell=1,2$, 
\begin{equation}\label{eq:H1-div}
  \begin{aligned}
  \|\d_{\ell,h}&(\chi u^-)\|_{H^1(M^-)}, \\&\leq C\big(\|\div(\d_{\ell,h}(\chi u^-)) \|_{L^2(M^-)}+\|{u}^- \|_{H^1(M^-)}+\|{u}^+ \|_{H^1(M^+)}
    +\|\d_{\ell,h}(\chi{u}^+)\|_{H^{1}(M^+)}\big).
  \end{aligned}
\end{equation}
\end{claim}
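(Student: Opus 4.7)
The plan is to apply a div--curl--trace estimate to $v := \delta_{\ell,h}(\chi u^-)$, exploiting the hypothesis $\Pi u^- = 0$, which forces $u^-$ to be curl-free and to have its normal trace at $\Gamma$ match that of $u^+$. Namely, $\Pi u^- = 0$ gives $u^- = \n\w$ for some $\w \in H^2(M^-)$ solving \eqref{eq:neumann_problem}, so $\curl u^- = 0$ on $M^-$, while the defining relation of $\calH^1_{\div,\rm{tr}}$ yields $u^- \cdot \nu = u^+ \cdot \nu$ on $\Gamma$ in the trace sense. Note also that $u^- \in H^1(M^-;\C\otimes TM)$ is already known, since $\w \in H^2(M^-)$.

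The key ingredient would be a Gaffney--Friedrichs type inequality of the form
$$\|w\|_{H^1(M^-)} \leq C\big(\|\div w\|_{L^2(M^-)} + \|\curl w\|_{L^2(M^-)} + \|w\|_{L^2(M^-)} + \|w\cdot\nu\|_{H^{1/2}(\Gamma)}\big),$$
valid for any $w \in L^2(M^-;\C\otimes TM)$ with finite right hand side; this should hold on $M^-$ (diffeomorphic to a ball, in particular simply connected with a single boundary component). I would then apply it with $w = v$ and estimate each term separately. First, $\div v$ is exactly the first term of the desired right hand side. Second, $\curl v = \delta_{\ell,h}(\n\chi \times u^- + \chi \curl u^-) = \delta_{\ell,h}(\n\chi \times u^-)$, using $\curl u^- = 0$, so its $L^2$-norm is bounded by $C\|\n\chi\times u^-\|_{H^1} \leq C\|u^-\|_{H^1}$ via the uniform bound $\|\delta_{\ell,h}\cdot\|_{L^2} \leq \|\p_{x_\ell}\cdot\|_{L^2}$. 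Third, $\|v\|_{L^2} \leq C\|u^-\|_{H^1}$ by the same difference-quotient bound. Finally, for the boundary term, since $\ell \in \{1,2\}$ corresponds to tangential directions in the chosen semigeodesic chart, the shift $h e_\ell$ preserves $\Gamma$, so $\delta_{\ell,h}$ commutes with restriction to $\Gamma$; combined with the matching condition this yields $v\cdot\nu\big|_\Gamma = \delta_{\ell,h}(\chi u^+\cdot \nu)\big|_\Gamma$, which the trace theorem bounds in $H^{1/2}(\Gamma)$ by $C\|\delta_{\ell,h}(\chi u^+)\|_{H^1(M^+)}$. Summing the four contributions produces the claimed estimate.

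The main obstacle is establishing the Gaffney--Friedrichs inequality above on the Riemannian manifold $(M^-,g)$ with a possibly non-flat metric. This is essentially classical, proved via a partition of unity together with the integration-by-parts identity relating $|\n v|^2$ to $|\div v|^2 + |\curl v|^2$ modulo curvature terms and a boundary divergence; the lower-order Christoffel contributions are harmlessly absorbed into $\|v\|_{L^2}$. A more hands-on alternative that avoids a direct appeal is to use the decomposition $\chi u^- = \n(\chi\w) - \w\, \n\chi$ and apply standard elliptic regularity to $\delta_{\ell,h}(\chi\w)$, viewed as a shifted solution of the Neumann problem with data controlled by $\|\div u^-\|_{L^2(M^-)}$ and $\|u^+\cdot\nu\|_{H^{1/2}(\Gamma)} \leq C\|u^+\|_{H^1(M^+)}$, trading the Gaffney estimate for a pure elliptic bootstrap on $\w$.
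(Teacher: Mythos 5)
Your primary approach, via a Gaffney--Friedrichs (div--curl--normal-trace) estimate applied to $v=\delta_{\ell,h}(\chi u^-)$, is a genuinely different packaging of the argument and is correct in spirit. The paper instead never invokes such a black-box inequality: it writes $u^-=\nabla\omega$ with $\omega$ solving \eqref{eq:neumann_problem}, applies elliptic $H^2$ regularity for the Neumann Laplacian directly to $\delta_{\ell,h}(m_\chi\omega)$, and then bounds the commutator $[\delta_{\ell,h}m_\chi,\nabla]\omega$ and the Neumann trace $\partial_\nu(\delta_{\ell,h}m_\chi\omega)$ using \eqref{eq:omega_estimate} and the trace theorem. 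This is exactly your ``more hands-on alternative,'' so your fallback is the paper's proof. In comparing the two: the Gaffney route is cleaner at the level of the main estimate, but it shifts the work into establishing the inequality on the curved domain $(M^-,g)$, and the standard proof of the inhomogeneous-normal-trace version reduces precisely to solving a Neumann problem and quoting the same elliptic $H^2$ estimate --- so the two routes are closer than they first appear. One small thing to tidy up if you pursue the Gaffney route: in the semigeodesic chart the metric is Euclidean only at $p_0$, not on all of $U$, so $\delta_{\ell,h}$ does \emph{not} commute exactly with the Riemannian $\curl$, nor with taking the inner product against $\nu$ on $\Gamma$. Both $[\delta_{\ell,h},\curl]$ and the $\nu$-contraction commutator are first-order commutators of $\delta_{\ell,h}$ with smooth-coefficient operators and are uniformly bounded in $h$ (cf.\ \cite[Lemma~4.14(iii)]{McLean-book}), so they contribute only lower-order terms already present on your right-hand side --- but they should be acknowledged, just as the paper tracks $[\delta_{\ell,h}m_\chi,\nabla]$ explicitly.
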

\noindent To prove Claim \ref{cl_1}, write $u^-=\td{u}^-=\n\w$, where $\w $ solves \eqref{eq:neumann_problem} and satisfies $\|\w\|_{H^1(M^-)}= \|\w+\C\|_{H^1(M^-)/\C}$.
Below we write $m_\chi$ for the operator of multiplication by $\chi$.
Using elliptic regularity estimates (e.g. \cite[Theorem 4.18]{McLean-book})
\begin{align}
  \|\d_{\ell,h}(\chi u^-)\|_{H^1(M^-)}  =&\|\d_{\ell,h}m_\chi\n \w\|_{H^1(M^-)}, \\
  \leq& C\left( \|\d_{\ell,h}m_\chi\w\|_{H^2(M^-)}+\|[\d_{\ell,h}m_\chi,\n]{\w}\|_{H^1(M^-)}\right)\\
  \leq& C\big( \|\Delta (\d_{\ell,h}m_\chi\w)\|_{L^2(M^-)}
  +\|\d_{\ell,h}m_\chi\w\|_{H^1(M^-)} \\*
  &\qquad +\|\p_\nu(\d_{\ell,h}m_\chi\w)\|_{H^{1/2}(\G)}+
  \|[\d_{\ell,h}m_\chi,\n]{\w}\|_{H^1(M^-)}\big)\\
  \leq& C\big( \|\div (\d_{\ell,h}m_\chi\n \w)\|_{L^2(M^-)}+ \|\div ([\d_{\ell,h}m_\chi,\n] \w)\|_{L^2(M^-)}+\|\w\|_{H^2(M^-)} \\*
  &\qquad +\|\p_\nu(\d_{\ell,h}m_\chi\w)\|_{H^{1/2}(\G)}+
  \|[\d_{\ell,h}m_\chi,\n]{\w}\|_{H^1(M^-)}\big)\\
  \leq& C\big( \|\div (\d_{\ell,h}(\chi u^-))\|_{L^2(M^-)}+\|\n \w\|_{H^1(M^-)}+\|\w\|_{H^1(M^-)} \\*
  &\qquad +\|\p_\nu(\d_{\ell,h}m_\chi\w)\|_{H^{1/2}(\G)}+
  \|[\d_{\ell,h}m_\chi,\n]{\w}\|_{H^1(M^-)}\big).\label{last_line}
\end{align}
Now one checks that if $a\in C^\infty(\o{M}^-)$ and $j=1,2,3$
\begin{equation}
  [\d_{\ell,h}m_\chi, a\p_{x_j}]=[\d_{\ell,h},m_{\chi a}]\p_{x_j}+a[m_\chi,\delta_{\ell,h}]\p_{x_j}+a\d_{\ell,h}[m_\chi,\p_{x_j}], %
\end{equation}
so by \cite[Lemma 4.14(iii)]{McLean-book}, which describes the behavior of the first two commutators,
\begin{align}
  \|[\d_{\ell,h}m_\chi,\n]{\w}\|_{H^1(M^-)}\leq C\|\w\|_{H^2(M^-)}\leq C\left(\|\n \w\|_{H^1(M^-)}+\|\w\|_{H^1(M^-)}\right)\\
  \leq  C \left(\|{u}^-\|_{H^1(M^-)}+\|\w\|_{H^1(M^-)}\right)\label{commutator}.
\end{align}
Further, using the trace theorem, the fact that $\p_\nu\w=u^+\cdot \nu $, and that $[\d_{\ell,h},\p_\nu]=0$ in our coordinates,  we check that
\begin{align}
  \|\p_\nu(\d_{\ell,h}m_\chi\w)\|_{H^{1/2}(\G)}
  &\leq C\left(\|\w \|_{H^2(M^-)}+\|\d_{\ell,h}(\chi u^+\cdot \nu) \|_{H^{1/2}(\G)}\right)\\
&
\leq C\left(\|{u}^-\|_{H^1(M^-)}+\|\w\|_{H^1(M^-)}+\|\d_{\ell,h}(\chi u^+)) \|_{H^{1}(M^+)}\right).\label{eq:normal_der_estimate}
\end{align}
Finally, estimating $\|\w\|_{H^1(M^-)}$ using \eqref{eq:omega_estimate} and the trace theorem, we obtain the claim by \eqref{last_line}, \eqref{commutator}, and \eqref{eq:normal_der_estimate}.

\smallskip

\begin{claim}\label{cl_2}
Given $\chi\in C_c^\infty(U;\R)$, $\bu=(u^+,u^-)\in D(P)$ with $\Pi u^-=0$, and $\bv=(v^+,v^-)\in \calH^1_{\div,\rm{tr}}$,
\begin{align}
  \big|\big[q^+(\chi  u^+, v^+)+q^-(\chi {u}^-, v^-)\big]-&\big[(-P^+ u^+,\chi  v^+)_{L^2(M^+,\r_{\rm s}dv_g)}+(- P^- {u}^-, \chi v^-)_{L^2(M^-,\r_{\rm f}dv_g)}\big]\big|
  \\*
  &\leq C\Big(\|u^+ \|_{H^1(M^+)}\big(\|v^+ \|_{L^2(M^+)}+\|\t (v^+) \|_{H^{-1/2}(\G)} \big)\\*
  &\qquad+\|u^- \|_{H^1(M^-)}\big(\|v^- \|_{L^2(M^-)}+\|\t (v^-) \|_{H^{-1/2}(\G)} \big)\Big).\label{boundary_terms}
   \end{align}
\end{claim}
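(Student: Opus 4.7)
The plan is to integrate by parts in both $q^+(\chi u^+,v^+)$ and $q^-(\chi u^-,v^-)$, extracting the $L^2$ pairings with $P^\pm u^\pm$ together with boundary contributions on $\G$ and lower-order interior remainders. The principal boundary terms from the two sides will cancel thanks to the transmission condition satisfied by $\bu\in D(P)$ (for the traction) and the transmission condition satisfied by $\bv\in\calH^1_{\div,\rm{tr}}$ (for the normal component), and all remaining pieces will be readily bounded by the right-hand side of the claim. Throughout I use that $\Pi u^-=0$ together with $\bu\in D(P)$ yields $u^-\in H^1(M^-;\C\otimes TM)$ and $\div u^-\in H^1(M^-)$, so that $(\div u^-)|_\G\in H^{1/2}(\G)$ is well-defined.

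On the solid side, I would apply the identity (already used in Section~\ref{sec:well_posedness})
\begin{equation}
q^+(u^+,v^+)=(-P^+u^+,v^+)_{L^2(M^+,\r_{\rm s}dv_g)}+\int_{\p M^+}N(u^+)\cdot \o{v}^+\, dA
\end{equation}
with $u^+$ replaced by $\chi u^+$ (valid since $\chi u^+\in H^2(M^+;\C\otimes TM)$). Since $P^+(\chi u^+)=\chi P^+u^+ +[P^+,\chi]u^+$ with $[P^+,\chi]$ of order one, the first term on the right splits into $(-P^+u^+,\chi v^+)_{L^2(M^+,\r_{\rm s}dv_g)}$ plus an interior remainder bounded by $C\|u^+\|_{H^1(M^+)}\|v^+\|_{L^2(M^+)}$. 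For the boundary term, I would use $N(\chi u^+)=\chi N(u^+)+B^+(u^+)$, where $B^+$ is zero-order with coefficients supported where $d\chi\neq 0$. Since $\supp\chi\subset U$ is disjoint from $\p M$, only the integral over $\G$ survives, and the transmission condition $N(u^+)|_\G=\l_{\rm f}(\div u^-)\nu$ produces the principal boundary term $\int_\G\chi\l_{\rm f}(\div u^-)(\nu\cdot \o{v}^+)\,dA$; the remainder $\int_\G B^+(u^+)\cdot\o{v}^+\,dA$ is controlled via $H^{1/2}$-$H^{-1/2}$ duality by $C\|u^+\|_{H^1(M^+)}\|\t(v^+)\|_{H^{-1/2}(\G)}$.

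On the fluid side, I would apply the divergence theorem on $M^-$ (recalling that the outward normal to $M^-$ is $-\nu$):
\begin{equation}
q^-(\chi u^-,v^-)=-\int_{M^-}\n[\l_{\rm f}\div(\chi u^-)]\cdot \o{v}^-\,dv_g-\int_\G \l_{\rm f}\div(\chi u^-)\,(\o{v}^-\cdot\nu)\,dA,
\end{equation}
the boundary integral being the $H^{1/2}$-$H^{-1/2}$ duality pairing. Expanding $\div(\chi u^-)=\chi\div u^-+d\chi(u^-)$ and using $\r_{\rm f}P^-u^-=\n(\l_{\rm f}\div u^-)$, the interior integral becomes $(-P^-u^-,\chi v^-)_{L^2(M^-,\r_{\rm f}dv_g)}$ plus integrals against $(\l_{\rm f}\div u^-)\n\chi$ and against $\n[\l_{\rm f}d\chi(u^-)]$, both of which lie in $L^2(M^-)$ (the second because $d\chi(u^-)\in H^1$ since $u^-\in H^1$) and thus pair with $\o{v}^-$ to give terms bounded by $C\|u^-\|_{H^1(M^-)}\|v^-\|_{L^2(M^-)}$. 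The boundary integral splits into the principal piece $-\int_\G \chi\l_{\rm f}(\div u^-)(\o{v}^-\cdot\nu)\,dA$ plus a lower-order piece $-\int_\G \l_{\rm f}d\chi(u^-)(\o{v}^-\cdot\nu)\,dA$, and the latter is controlled by $C\|u^-\|_{H^1(M^-)}\|\t(v^-)\|_{H^{-1/2}(\G)}$ via duality.

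Finally, the two principal boundary contributions
\begin{equation}
\int_\G\chi\l_{\rm f}(\div u^-)(\nu\cdot\o{v}^+)\,dA\quad\text{and}\quad -\int_\G\chi\l_{\rm f}(\div u^-)(\o{v}^-\cdot\nu)\,dA
\end{equation}
cancel: pairing the $H^{1/2}(\G)$ element $\chi\l_{\rm f}(\div u^-)|_\G$ with the identity $\t(v^+)\cdot\nu=\t(v^-\cdot\nu)$ in $H^{-1/2}(\G)$, valid because $\bv\in\calH^1_{\div,\rm{tr}}$, produces zero. Adding the two integration-by-parts identities and collecting the remainders yields the claim. The main technical point is to interpret the boundary pairings consistently with the mismatched regularities of $v^+$ (trace in $H^{1/2}$) and $v^-$ (only $\t(v^-\cdot\nu)\in H^{-1/2}$) so that the principal-level cancellation from the transmission condition can be invoked; once this is set up, every remaining term is of the form explicitly allowed on the right-hand side of \eqref{boundary_terms}.
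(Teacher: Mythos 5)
There is a genuine gap, and it is one of circularity. Claim~\ref{cl_2} sits inside the proof of Proposition~\ref{prop:estimate}, whose purpose is precisely to establish $u^+\in H^2(M^+)$ (and hence, via Proposition~\ref{prop:domain}, the description \eqref{eq:transmission_domain} of $D(P)$). At this stage, $\bu\in D(P)$ only gives $\bu\in D(q)=\calH^1_{\div,\rm{tr}}$ and $P\bu\in\calH^0$; it does \emph{not} give $u^+\in H^2(M^+)$, nor does it give the strong-form transmission condition $N(u^+)|_\Gamma=\lambda_{\rm f}(\div u^-)\nu$, which the paper derives only \emph{after} \eqref{eq:prop_estimate} by the argument following \eqref{eq:transmission_domain}. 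Your proposal uses both facts explicitly: you justify the solid-side Green identity for $\chi u^+$ by asserting ``$\chi u^+\in H^2(M^+)$,'' and you then invoke $N(u^+)|_\Gamma=\lambda_{\rm f}(\div u^-)\nu$ to produce the principal boundary term whose cancellation against the fluid side is essential. Without the cancellation the argument does not close: the exposed boundary term $\int_\Gamma\chi\lambda_{\rm f}(\div u^-)(\nu\cdot \o{v}^{\pm})\,dA$ is controlled only by $\|\div u^-\|_{H^1(M^-)}\|\t(v^\pm)\|_{H^{-1/2}(\Gamma)}$, which is \emph{not} of the form allowed on the right-hand side of \eqref{boundary_terms}. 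So the cancellation is load-bearing, but the identity needed for it is not yet available.

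The paper avoids exposing these boundary terms altogether. It starts from the purely abstract identity \eqref{eq:ibp},
\[
(-P^+u^+,\chi v^+)_{L^2(M^+,\rho_{\rm s}dv_g)}+(-P^-u^-,\chi v^-)_{L^2(M^-,\rho_{\rm f}dv_g)}=q^+(u^+,\chi v^+)+q^-(u^-,\chi v^-),
\]
which holds for any $\bu\in D(P)$ and $\chi\bv\in\calH^1_{\div,\rm{tr}}$ simply by the definition of $P$ via the closed form $q$ (no Green identity and no regularity beyond $H^1$); it then moves $\chi$ from the second to the first argument of $q^\pm$, which is a Leibniz computation generating only terms carrying a factor of $\nabla\chi$, all bounded with $u^\pm\in H^1$ and $v^\pm$ as given. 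That packaging is what makes the argument non-circular. If you want to keep your ``integrate-by-parts on each side and cancel'' route, you would first have to (a) replace the strong Green identity by the weak traction trace of \cite[Lemma~4.3]{McLean-book} (valid for $\chi u^+\in H^1$ with $P^+(\chi u^+)\in L^2$, the latter following from $P^+u^+\in L^2$ and $[P^+,\chi]$ being first order), and (b) derive the weak transmission identity $\langle\gamma_N(u^+),\t(v^+)\rangle_\Gamma=\langle\t(\lambda_{\rm f}\div u^-),\t(\nu\cdot v^-)\rangle_\Gamma$ for all $\bv\in\calH^1_{\div,\rm{tr}}$ directly from \eqref{eq:ibp} before using it. As written, both steps are missing. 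A minor additional remark: $\div u^-\in H^1(M^-)$ comes from $P^-u^-\in L^2(M^-)$ alone (so that $\nabla(\lambda_{\rm f}\div u^-)\in L^2$), not from $\Pi u^-=0$; the latter is what gives $u^-\in H^1(M^-)$.
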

\noindent To prove the claim, note that since  $\bu\in D(P)$ and $\chi \bv\in \calH_{\rm{div,tr}}^1$, %
\begin{equation}\label{eq:ibp}
  \begin{aligned}
    (-P^+ u^+,\chi  v^+)_{L^2(M^+,\r_{\rm s}dv_g)}+(- P^- {u}^-, \chi v^-)_{L^2(M^-,\r_{\rm f}dv_g)}=q^+(  u^+,\chi v^+)+q^-( {u}^-,\chi v^-).
  \end{aligned}
\end{equation}
The result will follow from moving $\chi$ from the second to the first argument of $q^\pm$ and estimating the resulting additional terms:
we have (recall that $\nu$ is inward pointing for $M^-$)
\begin{equation}
  \begin{aligned}
        q^-(u^-, \chi v^-)=&(\Div  u^- ,\Div \chi v^- )_{L^2(M^-,\l_{\rm f} dv_g)} \\
        =&(\chi \Div  u^- ,\Div  v^- )_{L^2(M^-,\l_{\rm f}dv_g)}
         +( \Div  u^- ,\n \chi \cdot  v^- )_{L^2(M^-,\l_{\rm f}dv_g)} \\
        =&q^-(\chi u^-,v^-)-(\l_{\rm f}\n \chi\cdot   u^- ,\Div  v^- )_{L^2(M^-,dv_g)}
        +( \Div  u^- ,\n \chi \cdot  v^- )_{L^2(M^-,\l_{\rm f}dv_g)} \\
        =&q^-(\chi u^-,v^-)+(\n (\l_{\rm f}\n \chi\cdot   u^- ),v^- )_{L^2(M^-,dv_g)}+\lg \t (\l_{\rm f}\n \chi\cdot   u^- ),\nu \cdot\t( v^-) \rg_{L^2(\G,dA)} \\*
        &\qquad +( \Div  u^- ,\n \chi \cdot  v^- )_{L^2(M^+,\l_{\rm f}dv_g)} %
  \end{aligned}
\end{equation}
and,  with $S$ denoting symmetrization,
\begin{equation}
  \begin{aligned}
        q^+(   u^+, \chi v^+)=&(\Div  u^+ ,\Div \chi v^+ )_{L^2(M^+,\l_{\rm s}dv_g)}+(d^{\rm s}  u^+ ,d^{\rm s} \chi v^+ )_{L^2(M^+,2\mu_{\rm s} dv_g)} \\
        =&(\chi \Div  u^+ ,\Div  v^+ )_{L^2(M^+,\l_{\rm s}dv_g)}+(\chi d^{\rm s}  u^+ ,d^{\rm s}  v^+ )_{L^2(M^+,2\mu_{\rm s} dv_g)} \\*
        &\quad +( \Div  u^+ ,\n \chi \cdot  v^+ )_{L^2(M^+,\l_{\rm s}dv_g)}+( d^{\rm s}  u^+ , S(\n\chi \otimes  v^+) )_{L^2(M^+,2\mu_{\rm s} dv_g)} \\
        =&q^+(\chi u^+,v^+)-(\n \chi\cdot   u^+ ,\Div  v^+ )_{L^2(M^+,\l_{\rm s}dv_g)}-(S(\n \chi \otimes   u^+) ,d^{\rm s}  v^+ )_{L^2(M^+,2\mu_{\rm s} dv_g)} \\
        &\quad +( \Div  u^+ ,\n \chi \cdot  v^+ )_{L^2(M^+,\l_{\rm s}dv_g)}+( d^{\rm s}  u^+ , S(\n\chi \otimes  v^+) )_{L^2(M^+,2\mu_{\rm s} dv_g)} \\
        =&q^+(\chi u^+,v^+)+(\n (\l_{\rm s}\n \chi\cdot   u^+ ),v^+ )_{L^2(M^+,dv_g)}-\lg \l_{\rm s}\n \chi\cdot   u^+ ,\nu \cdot v^+ \rg_{L^2(\G,dA)} \\
        &\quad +(\div (2 \mu_{\rm s} S(\n \chi \otimes   u^+)) ,  v^+ )_{L^2(M^+,dv_g)}-\lg  \nu \cdot (2 \mu_{\rm s} S(\n \chi \otimes   u^+)) ,  v^+ \rg_{L^2(\G,dA)} \\
        &\qquad +( \Div  u^+ ,\n \chi \cdot  v^+ )_{L^2(M^+,\l_{\rm s}dv_g)}+( d^{\rm s}  u^+ , S(\n\chi \otimes  v^+) )_{L^2(M^+,2\mu_{\rm s} dv_g)}.
  \end{aligned}
\end{equation}
Hence we find, using Cauchy-Schwartz 
\begin{equation}\label{eq:move_chi}
  \begin{aligned}
    |q^\pm(   u^\pm, \chi v^\pm)-&q^\pm( \chi  u^\pm,  v^\pm)|\\
    &\leq C\left(\|u^\pm\|_{H^1(M^\pm)}\|v^\pm\|_{L^2(M^\pm)} +\|\t u^\pm\|_{H^{1/2}(\G)}\|\t v^\pm\|_{H^{-1/2}(\G)} \right).
  \end{aligned}
\end{equation}
Combining \eqref{eq:ibp} with \eqref{eq:move_chi} and estimating $\|\t u^\pm\|_{H^{1/2}(\G)}$ by $\|u^\pm\|_{H^1(M^\pm)}$ via the trace theorem, we obtain the claim.

\medskip

Now substitute   $v^\pm= \delta_{\ell,-h}\delta_{\ell,h}(\chi u^\pm)$ for $\ell=1,2$ into \eqref{boundary_terms} and use the following estimates:
 \begin{align}
  \|\t \delta_{\ell,-h}\delta_{\ell,h}(\chi u^\pm)\|_{H^{-1/2}(\G)}=\| \delta_{\ell,-h}\t \delta_{\ell,h}(\chi u^\pm)\|_{H^{-1/2}(\G)}& \leq C \| \t \delta_{\ell,h}(\chi u^\pm)\|_{H^{1/2}(\G)}, \\
  &\qquad \leq C \| \delta_{\ell,h}(\chi u^\pm)\|_{H^{1}(M^\pm)}, \\
  \text{and }\quad \|\delta_{\ell,-h}\delta_{\ell,h}(\chi u^\pm)\|_{L^2(M^\pm)}
  &\leq C \|\delta_{\ell,h}(\chi u^\pm)\|_{H^1(M^\pm)}.
 \end{align}
Combining the resulting estimate with  \eqref{eq:coerciveness_1} and Cauchy-Schwartz we obtain
  \begin{align}
    \|\delta_{\ell,h}&(\chi u^+)\|^2_{H^1({M}^+)}+\|\div \delta_{\ell,h}(\chi u^-)\|_{L^2({M}^-)}^2\\
   &\leq C\Big(\|P^+ u^+\|_{L^2({M}^+)}\|\delta_{\ell,h}(\chi u^+) \|_{H^{1}({M}^+)}+\|P^-{u}^-\|_{L^2({M}^-)}\|\delta_{\ell,h}(\chi u^-) \|_{H^{1}({M}^-)} \\
    &\quad+\| u^+\|_{H^{1}(M^+)}\| \delta_{\ell,h}(\chi u^+)\|_{H^{1}(M^+)}+\| u^-\|_{H^{1}(M^-)}\| \delta_{\ell,h}(\chi u^-)\|_{H^{1}(M^-)}+\|u^+\|^2_{H^1(M^+)}\Big).\label{eq:b16}
  \end{align}
Using the inequality $ab\leq\frac{1}{2}( \e a^2 +\frac{1}{\e} b^2 )$ for sufficiently small $\e$ together with  \eqref{eq:H1-div}, \eqref{eq:b16} implies
\begin{align}
  \|\delta_{\ell,h}(\chi u^+)\|^2_{H^1({M}^+)}&+\|\div \delta_{\ell,h}(\chi u^-)\|_{L^2({M}^-)}^2\\*
 &\leq C\Big(\|P^+ u^+\|_{L^2({M}^+)}^2+\|P^- u^-\|_{L^2({M}^-)}^2+\| u^+\|_{H^{1}(M^+)}^2+\| {u}^-\|_{H^{1}(M^-)}^2\Big).
\end{align}
Sending $h\to 0$ we find that for $\ell=1,2$ 
\begin{equation}
  \begin{aligned}\label{eq:H_1_first}
      \|\p_{x_\ell}(\chi u^+)\|^2_{H^1({M}^+)}&+\|\div \p_{x_\ell}(\chi u^-)\|_{L^2({M}^-)}^2\\
      &\leq C\Big(\|P^+ u^+\|_{L^2({M}^+)}^2+\|P^- u^-\|_{L^2({M}^-)}^2+\| u^+\|_{H^{1}(M^+)}^2+\| {u}^-\|_{H^{1}(M^-)}^2\Big), 
  \end{aligned} 
\end{equation}
thus 
\begin{equation}
\begin{aligned}\label{eq:H_1}
    \|\p_{x_\ell}(\chi u^+)\|_{H^1({M}^+)}^2&+\|\p_{x_\ell}\div (\chi u^-)\|_{L^2({M}^-)}^2\\
    &\leq C\Big(\|P^+ u^+\|_{L({M}^+)}^2+\|P^- u^-\|_{L({M}^-)}^2+\| u^+\|_{H^{1}(M^+)}^2+\| {u}^-\|_{H^{1}(M^-)}^2\Big), 
\end{aligned} 
\end{equation}
using that $\|\p_{x_\ell}\div \big(\chi u^-)\|_{L^2({M}^-)}^2\leq C( \|\div \p_{x_\ell}(\chi u^-)\|_{L^2({M}^-)}+\|u^-\|_{H^1(M^-)}\big) $.

\medskip

For the derivatives normal to the interface, $(u^+,u^-)\in D(P)$ implies that $P^+ u^+=f^+\in L^2({M}^+;\C\otimes TM)$.
Since  $\G$ is non-characteristic for $P^+$, we have that
\begin{equation}\label{eq:normal_derivatives}
  a^+(x)\p_{x_3}^2 (\chi u^+) = \td{P}^+ (\chi u^+)+ Q^+(u^+) +\chi f^+,
\end{equation}
where $\det a^\pm\neq 0$, $Q^+$ is an operator of order 1
 and $\td{P}^+$ is a differential operator of order 2 in which the order of normal derivatives appearing is no more than 1. Hence %
\begin{equation}\label{eq:plus_side}
    \|\p_{x_3}(\chi u^+)\|^2_{H^1({M}^+)}
\leq C\Big(\sum_{j=1}^2\|\p_{x_j}(\chi u^+)\|_{H^1(M^+)}^2+\|P^+ u^+\|_{L^2({M}^+)}^2+\|u^+\|_{H^1(M^+)}^2\Big).
\end{equation}
On the other hand,
\begin{equation}
\label{eq:minus_side}
\begin{aligned}
  \|\p_{x_3}\div (\chi u^-)\|_{L^2({M}^-)}^2&\leq C\left(\|\p_{x_3}\l_{\rm f} \div  u^-\|_{L^2({M}^-)}^2+\| \div  u^-\|_{L^2({M}^-)}^2 +\|u^-\|_{H^1(M^-)}^2\right)\\
  &\leq C\left(\|P^-  u^-\|_{L^2({M}^-)}^2+\| \div  u^-\|_{L^2({M}^-)}^2 +\| u^+\|_{H^1(M^+)}^2\right),
\end{aligned}
\end{equation}
where we used \eqref{eq:elliptic_regularity} in the last step.
Adding \eqref{eq:plus_side} and \eqref{eq:minus_side} and using \eqref{eq:H_1} to estimate the terms appearing in the summation in \eqref{eq:plus_side}, we find that \eqref{eq:H_1} also holds for $\ell=3$.

If $\supp \chi \cap \G=\emptyset$ the proof of \eqref{eq:H_1} for $\ell=1,2,3$  can be done in a similar way, though it is simpler.
Using a partition of unity we obtain \eqref{eq:prop_estimate}, finishing the proof of Proposition~\ref{prop:estimate}. 
\end{proof}

\begin{remark}
  Even though the proof is written assuming that $M\subset \R^3$, it would work in exactly the same way for any dimension $\geq 2$.
\end{remark}

We finally have:

\begin{proof}[Proof of Corollary \ref{cor:estimate}]
The estimate \eqref{eq:cor_estimate} is shown for $k=0$ in Proposition \ref{prop:estimate}. 
Suppose it is known for some fixed $k\geq 0$. We will show that it also holds for $k+1$.
Recall the notation $\td{u}^-$ from \eqref{eq:potential_part}.
By our inductive hypothesis and \eqref{eq:elliptic_regularity}, $u^+\in H^{k+2}(M^+;\C\otimes TM)$, $\td{u}^-\in H^{k+2}(M^-;\C\otimes TM)$.
We also have $P^\pm u^\pm\in H^{k+1}(M^\pm;\C\otimes TM)$.
If $V$ is a vector field on $M$ tangent to $\G$ and $\p M$ and $\calL_V$ denotes Lie  derivative,  $[\div, \calL_V]$, $[d^{\rm s}, \calL_V]$ are  operators of order 1 and $[\calL_V,P^\pm]$ are operators of order 2 because the principal symbol of $\calL_V$ is a scalar multiple of the identity. 
Note that $(\calL_V u^+,\calL_V \td{u}^-)$ in general does not satisfy the transmission/boundary conditions in \eqref{eq:transmission_domain} but the fact that those are satisfied for $(u^+,\td{u}^-)$ implies that
\begin{align} %
  \nu \cdot(\calL_V u^+-\calL_V \td{u}^-)\big|_{\G}\in H^{k+3/2}(\G;\C\otimes TM),\quad & N(\calL_Vu^+)\big|_{\p M}\in H^{k+1/2}(\p M;\C\otimes TM),\\
   N(\calL_V u^+)-\l_{\rm f} \div (\calL_V \td{u}^-)\nu& \big|_{\G}\in H^{k+1/2}(\G;\C\otimes TM).
 \end{align}
Thus we can construct suitable extension operators off the boundary and interface (see e.g. \cite[Lemma 3.36]{McLean-book}) to find $w^+\in H^{k+2}(M^+;\C \otimes TM)$ which satisfies 
\begin{equation}
  \begin{cases}
    \nu \cdot w^+ =-\nu\cdot (\calL_Vu^+-\calL_V\td{u}^-)&\quad \text { on }\G, \\
    N(w^+)=-N(\calL_V u^+)+\l_{\rm f} \div (\calL _V \td{u}^-)\nu&\quad \text { on }\G, \\
     N(w^+)=-N(\calL_Vu^+)&\quad \text { on }\p M, 
  \end{cases}
\end{equation}
and
\begin{equation}\label{eq:extension_estimate}
  \|w^+\|_{H^{k+2}(M^+)}^2\leq C\big( \|u^+\|_{H^{k+2}(M^+)}^2+\|\td{u}^-\|_{H^{k+2}(M^-)}^2\big).
\end{equation}

We now wish to use the inductive hypothesis, namely  \eqref{eq:cor_estimate} for our fixed $k\geq 0$.
Notice that $(\calL_Vu^++w^+,
\calL_V\td{u}^-)$ satisfies the transmission and boundary conditions in \eqref{eq:transmission_domain} by construction.
Moreover, for all $k\geq 0$, the inductive hypothesis and the order of the commutators $[\calL_V,P^\pm]$ imply that $P^+(\calL _V u^++w^+)\in H^k(M^+;\C\otimes TM)$, $P^-(\calL _V \td{u}^-)\in H^k(M^+;\C\otimes TM)$.
Using those facts and \eqref{eq:domain_general_P} it can be checked that $(\calL_Vu^++w^+,
\calL_V\td{u}^-)\in D(P)$.
Now use \eqref{eq:cor_estimate} for the second inequality:
    \begin{align}
      \|\calL_Vu^+&\|^2_{H^{k+2}({M}^+)}+\|\calL_V\div u^-\|^2_{H^{k+1}({M}^-)} \\*
      &\leq C\Big( \|\calL_Vu^++w^+\|^2_{H^{k+2}({M}^+)}+\|\div \calL_V \td{u}^-\|^2_{H^{k+1}({M}^-)} \\*
      &\hspace{1 in}+\|[\div ,\calL_V]\td{u}^-\|^2_{H^{k+1}({M}^-)}+\|w^+\|_{H^{k+2}(M^+)}^2\Big)\\*
      &\leq C\Big( \|P^+(\calL_Vu^++w^+)\|_{H^{k}(M^+)}^2+\|P^-\calL_V\td{u}^-\|_{H^{k}(M^-)}^2 \\*
      &\qquad +\|\calL_V u^++w^+\|_{H^1(M^+)}^2+\| \div \calL_V\td{u}^-\|_{L^2(M^-)}^2+\|\td{u}^-\|^2_{H^{k+2}({M}^-)}+\|u^+\|_{H^{k+2}(M^+)}^2\Big)\\
      &\leq C\Big( \|P^+u^+\|_{H^{k+1}(M^+)}^2+\|P^-\td{u}^-\|_{H^{k+1}(M^-)}^2\\*
       &\hspace{ 1 in}+ \| u^+\|_{H^{k+2}(M^+)}^2+\| \td{u}^-\|_{H^{k+2}(M^-)}^2+\|w^+\|_{H^{k+2}(M^+)}^2\Big)\\
      &\leq C\left( \|P^+u^+\|_{H^{k+1}(M^+)}^2+\|P^-{u}^-\|_{H^{k+1}(M^-)}^2 + \| u^+\|_{H^{k+2}(M^+)}^2+\|\div  {u}^-\|_{H^{k+1}(M^-)}^2\right),
    \end{align}
using \eqref{eq:elliptic_regularity} and \eqref{eq:extension_estimate}.
Using the inductive hypothesis to replace the last two terms  we find
  \begin{equation}\label{eq:tangential_derivatives}
    \begin{aligned}
      \|\calL_Vu^+&\|^2_{H^{k+2}({M}^+)}+\|\calL_V\div u^-\|^2_{H^{k+1}({M}^-)} \\
      &\leq C\left( \|P^+u^+\|_{H^{k+1}(M^+)}^2+\|P^-{u}^-\|_{H^{k+1}(M^-)}^2 + \| u^+\|_{H^{1}(M^+)}^2+\|\div  {u}^-\|_{L^2(M^-)}^2\right).
    \end{aligned}
  \end{equation}

For the derivatives normal to the interface and boundary we can use the same method as in the proof of Proposition \ref{prop:estimate} to show that in local coordinates with respect to which $x_3=0$ represents the interface $\G$ or $\p M$, the expression $\|\p_{x_3}^{k+3}(\chi u^+)\|_{L^2(M^+)}^2+\|\p_{x_3}^{k+2} (\chi \div {u}^-)\|_{L^2(M^-)}^2$, where $\chi$ is supported in a neighborhood where the coordinates are valid,  are estimated by the right hand side of \eqref{eq:tangential_derivatives}. 
With a partition of unity we obtain \eqref{eq:cor_estimate} for $k+1$.

\smallskip

The statement regarding $\bu\in D(P^m)$ follows for $m=1$
 by \eqref{eq:cor_estimate}.
If $m\geq 2$, we use \eqref{eq:cor_estimate} for $k+2=2m$.
One would like to estimate the resulting term $\|P^+u^+\|_{H^{k}(M^+)}^2+\|P^-u^-\|_{H^{k}(M^-)}^2$, by replacing $u^\pm$ by $P^\pm u^\pm$ in \eqref{eq:cor_estimate}, and proceed inductively to show the claim.
However such an estimate doesn't follow immediately from \eqref{eq:cor_estimate} since the latter only gives an estimate on $\|P^+u^+\|_{H^{k}(M^+)}^2+\|\div P^-u^-\|_{H^{k-1}(M^-)}^2$.
We can circumvent the issue by means of the following estimate: for any $r\geq 1$ we have, using elliptic regularity estimates for the second inequality below,
\begin{align}
  \|P^- u^-&\|_{H^r(M^-)}=\|\r_{\rm f}^{-1}\n\l_{\rm f}\div u^-\|_{H^r(M^-)}\leq C \|\l_{\rm f}\div u^-\|_{H^{r+1}(M^-)}\\
  \leq & C\left(\|(\div  \r_{\rm f}^{-1}\n)\l_{\rm f}\div u^-\|_{H^{r-1}(M^-)}+\|\l_{\rm f} \div  u^-\|_{H^{1}(M^-)}+\|\nu\cdot \n (\l_{\rm f} \div  u^-)\|_{H^{r-1/2}(\G)}\right)\\
  \leq & C\left(\|\div  P^- u^-\|_{H^{r-1}(M^-)}+\|\l_{\rm f} \div  u^-\|_{H^{1}(M^-)}+\|\nu\cdot \n (\l_{\rm f} \div  u^-)\|_{H^{r-1/2}(\G)}\right)\\
  \leq & C\left(\|\div  P^- u^-\|_{H^{r-1}(M^-)}+\|\l_{\rm f} \div  u^-\|_{H^{1}(M^-)}+\|\nu\cdot \t(P^-  u^-)\|_{H^{r-1/2}(\G)}\right)\\
  \leq & C\left(\|\div  P^- u^-\|_{H^{r-1}(M^-)}+\|\l_{\rm f} \div  u^-\|_{H^{1}(M^-)}+\|\nu\cdot \t(P^+  u^+)\|_{H^{r-1/2}(\G)}\right)\label{eq:minus_to_plus} \\
  \leq & C\left(\|\div  P^- u^-\|_{H^{r-1}(M^-)}+\|\l_{\rm f} \div  u^-\|_{H^{1}(M^-)}+\|P^+  u^+\|_{H^{r}(M^+)}\right),
\end{align}
where in \eqref{eq:minus_to_plus} we used the fact that if $\bu\in D(P^m)$ for $m\geq 2$, then since $(P^+u^+,P^-u^-)\in D(P)$ we have $ \nu \cdot \t(P^+u^+)=\nu\cdot \t(P^-u^-)$.
Hence for $k\geq 1                  $
\begin{equation}
  \begin{aligned}
    \|P^+u^+\|_{H^{k}(M^+)}^2&+\|P^-u^-\|_{H^{k}(M^-)}^2\\
    &\leq C \left(\|P^+ u^+\|_{H^k(M^+)}^2+\|\div  P^- u^-\|_{H^{k-1}(M^-)}^2 +\|\div  u^-\|_{H^{1}(M^-)}^2\right),
  \end{aligned}
\end{equation}
and \eqref{eq:cor_estimate} can be used to push the induction through. This completes the proof of the corollary. 
\end{proof}

%

%

%


\end{document}